\newcommand{\RR}{\mathbb R}
\newcommand{\NN}{\mathbb N}
\renewcommand{\SS}{\mathbb S}
\renewcommand{\H}{\mathcal H}
\newcommand{\eps}{\varepsilon}
\newcommand{\loc}{\mathrm{loc}}
\renewcommand{\vec}[1]{\mathbf{#1}}
\newcommand{\abs}[1]{\left\vert #1 \right\vert}
\newcommand{\Abs}[1]{\left\Vert #1 \right\Vert}
\newcommand{\enclose}[1]{\left(#1\right)}
\newcommand{\Enclose}[1]{\left[#1\right]}
\newcommand{\ENCLOSE}[1]{\left\{#1\right\}}
\newcommand{\defeq}{:=}
\renewcommand{\H}{\mathcal{H}}
\newtheorem{theorem}{Theorem}[section]
\newtheorem{proposition}[theorem]{Proposition}
\newtheorem{lemma}[theorem]{Lemma}
\newtheorem{corollary}[theorem]{Corollary}
\theoremstyle{definition}
\newtheorem{definition}[theorem]{Definition}
\theoremstyle{remark}
\newtheorem{remark}[theorem]{Remark}
\title{locally isoperimetric partitions}
\author[Novaga]{M. Novaga}
\author[Paolini]{E. Paolini}
\author[Tortorelli]{V. M. Tortorelli}
\begin{document}
\maketitle

\begin{abstract}
  Locally isoperimetric $N$-partitions are partitions of the space $\RR^d$ into $N$ regions 
  with prescribed, finite or infinite measure, which have minimal perimeter 
  (which is the $(d-1)$-dimensional measure of the interfaces between the regions) among 
  all variations with compact support preserving the total measure of each region.
  In the case when only one region has infinite measure, the problem reduces to the 
  well known problem of isoperimetric clusters: in this case the minimal perimeter is finite, 
  and variations are not required to have compact support.

  In a recent paper by Alama, Bronsard and Vriend, 
  the definition of isoperimetric partition was introduced, and an example,
  namely the \emph{lens} partition, was shown to be locally isoperimetric in the plane.
  In the present paper we are able to give more examples of isoperimetric partitions: 
  in any dimension $d\ge 2$ we have the \emph{lens}, 
  the \emph{peanut} and the \emph{Releaux} triangle. 
  For $d\ge 3$ we also have a \emph{thetrahedral} partition.
  To obtain these results we prove a \emph{closure} theorem which enables us to 
  state that the $L^1_\loc$-limit of a sequence of isoperimetric clusters 
  is an isoperimetric partition, if the limit partition 
  is composed by \emph{flat} interfaces outside a large ball.
  In this way we can make use of the known results about standard clusters. 
  
  In the planar case $d=2$ we have a complete understanding of locally isoperimetric partitions: 
  they exist if and only if the number of regions with infinite area is at most three.
  Moreover if the total number of regions is at most four then, up to isometries, 
  there is a unique locally isoperimetric partition which is the lens, the peanut or the Releaux partition already mentioned.
\end{abstract}

\tableofcontents

\section{Introduction}

The isoperimetric problem is to find a set $E\subset \RR^d$ 
which minimizes its perimeter $P(E)$ 
among all sets with fixed measure $\abs{E} = m$. 
It is well known that the solution to this problem 
(unique up to translation) is the ball of volume $m$.

The isoperimetric problem can be extended to clusters, i.e.\ to 
$N$-uples of disjoint sets $E_1,\dots, E_N$ (regions) which have prescribed 
finite measure $\abs{E_k} = m_k$ and which minimize the total surface area of 
the interfaces. 
Since the common boundary between two regions is only counted once, 
the regions in an isoperimetric cluster are indeed encouraged 
to share part of their boundary. 
This effectively forms a cluster of bubbles.
The existence of isoperimetric clusters with given measures
is guaranteed by the direct method of the calculus of variations \cite{Alm76}.
The problem can be settled in the family of sets with finite perimeter,
also called Caccioppoli sets. 
The perimeter of a Caccioppoli set is indeed lower 
semicontinuous with respect to $L^1$ convergence of sets.
Even if there is no compactness (since $\RR^d$ is unbounded) 
one can still use the tools of concentration compactness to
obtain a minimizer (see, for example, \cite{Mag12, NoPaStTo21}).
Isoperimetric clusters are partially regular in the sense that 
up to a closed \emph{singular} set of zero $(d-1)$-dimensional
measure the boundary of each region 
is a smooth hypersurface with constant mean curvature (see \cite{Alm76, Mag12}). 
In the planar case, $d=2$, the boundaries between two regions  are composed by 
a finite number of circular arcs or straight lines segments, 
the singular set is composed by a finite number of points (vertices), 
and in each vertex exactly three boundaries meet together with equal angles.

In the case $N\le d+1$ for any given $N$-uple of positive measures, 
there is a natural cluster, 
the so called \emph{standard} $N$-bubble (see Definition~\ref{def:standard}), 
which is conjectured to be the unique minimizer, up to isometries,
for the isoperimetric problem.

For $N=2$ (double bubbles) 
isoperimetric clusters have been proven to be standard 
by Foisy et al. \cite{Foi93} when $d=2$, 
and by Hutchinson-Morgan-Ritoré-Ros \cite{HutMorRitRos02} when $d\ge 2$.
For $N=3$ (triple bubbles)
the result (isoperimetric clusters are standard) has been obtained 
by Wichiramala \cite{Wic04} in the case $d=2$, and by the recent results 
of Milman and Neeman \cite{MilNee22} when $d\ge 2$.
For $N=4$ the same result   
has been obtained again in \cite{MilNee22} when $d\ge 4$.
For $N=5$, $d\ge 5$ in \cite{MilNee23} the same authors 
prove that standard clusters are isoperimetric.
They also prove, \cite[Theorem 1.9]{MilNee22}, 
that any isoperimetric $N$-cluster of $\RR^d$, 
has connected regions if $N\le d$.

In the case $N\ge d+2$ there is no notion of standard cluster.
But even in this case it is conjectured that all regions comprising an 
isoperimetric cluster should be connected.
Even in the simplest case $N=4$ and $d=2$ the problem is open 
(but see \cite{PaoTam16, PaoTor20} for the case $d=2$, $N=4$ and equal 
areas).

Inspired by the paper \cite{AlaBroVri23} we are 
going to consider the case when some regions of the cluster 
may have infinite measure.
If $E_1,\dots,E_N$ is a cluster in $\RR^d$ we can add 
an \emph{external} region $E_0$ of infinite measure to 
obtain a \emph{partition} $E_0,E_1,\dots,E_N$ of the whole 
space $\RR^d$. 
An $N$-cluster can thus be considered as an $(N+1)$-partition 
where one of the regions has infinite measure, and the others 
have prescribed finite measure.
If we require two (or more) regions to have infinite measure then the 
partition cannot have finite perimeter and hence 
it makes no sense to minimize the perimeter of the whole partition. 
We are thus led to consider a different, local notion of minimizer as 
follows: 
a partition is said to be \emph{locally isoperimetric}
if for every bounded set $\Omega$ the perimeter of the partition 
in the interior of $\Omega$ 
is minimal among all partitions composed by regions with 
the same prescribed measures whose difference 
(set theoretic symmetric difference) with 
the corresponding region of the original partition is compactly contained 
in $\Omega$ (see Definition~\ref{def:isoperimetric} below).

\begin{figure}
  \begin{tikzpicture}[line cap=round,line join=round,>=triangle 45,x=0.5cm,y=0.5cm]
    \clip(-3,-0.1) rectangle (3,2.1);
    \draw [domain=1.7320508075688772:8.680000000000005] plot(\x,{(--3.41-0*\x)/3.41});
    \draw [domain=-7.860000000000002:-1.7320508075688774] plot(\x,{(-1.41-0*\x)/-1.41});
    \draw [shift={(0,0)}] plot[domain=0.52:2.62,variable=\t]({1*2*cos(\t r)+0*2*sin(\t r)},{0*2*cos(\t r)+1*2*sin(\t r)});
    \draw [shift={(0,2)}] plot[domain=3.67:5.76,variable=\t]({1*2*cos(\t r)+0*2*sin(\t r)},{0*2*cos(\t r)+1*2*sin(\t r)});
  \end{tikzpicture}\hfill%
  \begin{tikzpicture}[line cap=round,line join=round,>=triangle 45,x=0.5cm,y=0.5cm]
    \clip(-3,-0.1) rectangle (4,2.1);
    \draw [shift={(3.86,1)}] plot[domain=2.95:3.34,variable=\t]({1*2.59*cos(\t r)+0*2.59*sin(\t r)},{0*2.59*cos(\t r)+1*2.59*sin(\t r)});
    \draw [shift={(1.96,-0.39)}] plot[domain=0.77:1.9,variable=\t]({1*2*cos(\t r)+0*2*sin(\t r)},{0*2*cos(\t r)+1*2*sin(\t r)});
    \draw [shift={(1.96,2.39)}] plot[domain=4.38:5.52,variable=\t]({1*2*cos(\t r)+0*2*sin(\t r)},{0*2*cos(\t r)+1*2*sin(\t r)});
    \draw [shift={(0,0)}] plot[domain=0.85:2.62,variable=\t]({1*2*cos(\t r)+0*2*sin(\t r)},{0*2*cos(\t r)+1*2*sin(\t r)});
    \draw [shift={(0,2)}] plot[domain=3.67:5.43,variable=\t]({1*2*cos(\t r)+0*2*sin(\t r)},{0*2*cos(\t r)+1*2*sin(\t r)});
    \draw [domain=-4.0119959030658:-1.7320508075688772] plot(\x,{(-0.88-0*\x)/-0.88});
    \draw [domain=3.400216203626535:5.093753580545594] plot(\x,{(--0.82-0*\x)/0.82});
    \end{tikzpicture}\hfill%
  \begin{tikzpicture}[line cap=round,line join=round,>=triangle 45,x=0.5cm,y=0.5cm]
    \clip(-4,-2.5) rectangle (3.5,1);
    \draw [shift={(-0.78,-1.33)}] plot[domain=-0.01:1.04,variable=\t]({1*1.54*cos(\t r)+0*1.54*sin(\t r)},{0*1.54*cos(\t r)+1*1.54*sin(\t r)});
    \draw [shift={(0.76,-1.34)}] plot[domain=2.08:3.13,variable=\t]({1*1.54*cos(\t r)+0*1.54*sin(\t r)},{0*1.54*cos(\t r)+1*1.54*sin(\t r)});
    \draw [shift={(0,0)}] plot[domain=4.18:5.23,variable=\t]({1*1.54*cos(\t r)+0*1.54*sin(\t r)},{0*1.54*cos(\t r)+1*1.54*sin(\t r)});
    \draw [domain=0.7560532952001398:3.6790608748351423] plot(\x,{(-0.84-0.56*\x)/0.95});
    \draw [domain=-4.020930204499883:-0.7830200017915531] plot(\x,{(--0.49-0.31*\x)/-0.55});
    \draw [domain=0.0:3.6790608748351423] plot(\x,{(-0--0.83*\x)/0.01});
    \end{tikzpicture}%
    \caption{The lens, the peanut and the Releaux partition.}
    \label{fig:lente}
    \label{fig:peanut1}
    \label{fig:releaux}
\end{figure}
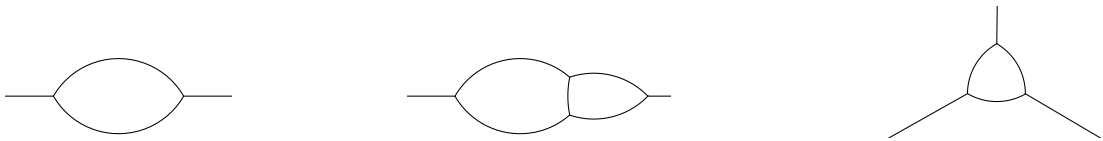

In \cite{AlaBroVri23} the authors consider the planar case $d=2$
with $\abs{E_0}=\abs{E_1} = \infty > \abs{E_2} > 0$,
and introduce the concept 
of locally isoperimetric partitions.
Moreover they show that the \emph{lens} partition,
whose boundary is composed by two symmetric arcs and two half lines
joining with equal angles, 
(see Figure~\ref{fig:lente}) is, in fact, locally isoperimetric. 

In this paper we investigate the properties of locally isoperimetric 
partition. 
First of all we aim to prove a closure theorem. 
The idea is that isoperimetric clusters are a particular case of
partitions with a single infinite region. 
But any partition, even with two or more infinite regions, can be seen 
as the limit (in the sense of $L^1_\loc$ topology) of a sequence 
of clusters. 
We are not currently able to show that the limit of any sequence of isoperimetric 
clusters is indeed a locally isoperimetric partition: 
the problem is that the local volume constraint of the finite regions which converge to 
an infinite region is lost in the limit. 
However we are able to prove a closure result (see Theorem~\ref{th:closure})
which guarantees that such a limit is isoperimetric with respect to variations which 
preserve the volumes not only of the finite regions but also of the infinite ones 
(since the variations have bounded support one can put a constraint on the volume in 
a large ball, taking into account that outside the ball the region coincides 
with its variation).
Next we show that if the limit partition is \emph{flat} at infinity 
(see Definition~\ref{def:eventually_flat}) then we are able to prove that
the constraint on the infinite regions can be dropped because flat regions 
can be modified by a variation which allows us to change the volume by any 
large amount with a small change of the perimeter 
(see Theorem~\ref{th:large_volumes}).
These results allow us to find many examples of isoperimetric partitions in $\RR^d$. 
The lens partition in $\RR^d$ is the limit of double bubbles hence 
we can state that it is locally isoperimetric in any dimension.
By taking the limit of a triple bubble we obtain the \emph{peanut} partition, which is 
composed by two finite and two infinite regions, in any dimension.
Again, by considering the limit of triple bubbles we obtain a partition with three infinite 
regions and only one finite region that we call \emph{Releaux partition} since 
in dimension $d=2$ the finite region is a Releaux triangle.
See Figure~\ref{fig:lente}.
The other cases covered by the results of \cite{MilNee22}
are the partitions obtained as limits of quadruple bubbles in $\RR^d$ for $d\ge 4$.

In the planar case we are able to find some more results.
First of all we show that all locally isoperimetric partitions can have only 
one, two or three infinite regions. 
Then we prove that if we assign any number of areas for the finite 
regions, and we have from one to three infinite regions, a locally 
isoperimetric partition, with the given areas, always exists (Theorem~\ref{th:existence}).
This is achieved by taking minimal clusters with the prescribed areas inside 
a ball whose radius is going to infinity. 
The difficult point in this approach is that of proving that no mass escapes to infinity:
it is here that we need the assumption $d=2$ and use some tools already developed for 
planar clusters.
In the cases of the examples above, namely the lens (two infinite and one finite region), 
the peanut (two infinite and two finite regions), the Releaux triangle (three infinite and one 
finite region) we can show that these example, up to isometries, are in fact the unique 
locally isoperimetric partitions with their prescribed areas 
(Theorem~\ref{th:uniqueness}).

In the Appendix
we sketch how to adapt some statements on isoperimetric clusters, 
\emph{e.g.} the so called Almgren's Lemma and the Infiltration Lemma,
to limits of locally isoperimetric partitions, 
to get volume density estimates and a priori boundedness of their  
regions with finite volume (see also Theorem~\ref{th:recall}).

To summarize, in this paper we give a clear understanding of what locally isoperimetric 
partitions in the plane look like. We have also many examples in higher dimension, which rely on 
the corresponding results for clusters. 
These results are obtained by means of a closure 
theorem which is valid in any dimension
while in the planar case we adapt to infinite regions the tools already 
developed in the study of planar clusters.

There are many questions left open. Of course any question which is open for the case of clusters 
is open in the case of partitions. For example we do not know, in general, if the bounded regions 
of an isoperimetric partition are connected. 
Also, in the case $d\ge 3$, when we prove that standard partitions are locally isoperimetric
we are not able to prove that they are the unique example (up to isometries) with their volume.

A particular case of isoperimetric partitions is the case when all regions 
have infinite measure. 
In this case, since there are no volume constraints, 
isoperimetric partitions can be called \emph{locally minimal partitions}.
If $N=2$ and $d\le 7$
it is known that all locally minimal partitions are half-spaces 
(see \cite{Sim68,Deg65})
For $N=2$ and $d\ge 8$ there are also minimal cones
which are not hyperplanes, for example Simons cone (see \cite{BomDeGGiu69}) 
and Lawson cones (see \cite{Law72}).
These are examples of locally isoperimetric partitions which are not standard.
For $N=2$ and $d\ge 9$ it is possible to find locally minimal partitions 
which are not cones (this is the Bernstein problem, see \cite{BomDeGGiu69}).
For any $N$ in $d=3$ all locally minimal conical partitions have been classified 
(see \cite{Tay73}): apart from the half-spaces there are 
only the dihedral angle with 120 degrees and the tetrahedral cone.
Of course a cylinder over a locally minimal partition is also locally minimal, so we can 
produce many examples also in higher dimension, but a complete classification is still missing
in the case $d>3$.
There are also a few more examples of minimal cones with $N>2$ and $d>3$: 
for $N=d+1$ the cone over the skeleton
of the symplex (see \cite{LawMor94}) and for $N=2d$ the cone over the skeleton of the hypercube 
(see \cite{Bra91}).

Notice that in $d\ge 8$ the Simons and Lawson cones are locally isoperimetric partitions 
which are not eventually flat (see Definition~\ref{def:eventually_flat}).
However, it seems reasonable that the flatness condition that we have used in our examples 
could be replaced with the condition of having zero mean curvature, 
with some decay at infinity of the second fundamental form.
In this respect we could expect to find a locally isoperimetric partition 
in $\RR^8$ with prescribed volumes $(1,\infty,\infty)$ which is not eventually flat 
and hence is different from the lens partition with the same volumes.

\smallskip
{\bf Acknowledgements}
  The authors wish to thank Emanuel Milman for useful comments on a preliminary version of this work.
  The first author was partially supported by the project PRIN 2022 GEPSO, 
  financed by the European Union – NextGenerationEU, and 
  the second author was supported by the Project PRIN 2022PJ9EFL.
  The first and second authors are members of the INDAM-GNAMPA.

\section{Notation and preliminary results}

We shall denote by $\omega_d$ the volume of the unit ball in $\RR^d$.
A set $E\subset \RR^d$ is said to be a \emph{Caccioppoli set}
or \emph{set with locally finite perimeter}
if $E$ is measurable and the distributional derivative ${D\mathbbm{1}_E}$
of its characteristic function is a Radon vector measure.
We define $P(E,B)$, the perimeter of $E$ in $B$
by means of the total variation of the characteristic function: $\Abs{D\mathbbm{1}_E}(B)$. 
We denote $P(E,\RR^d)$ by $P(E)$ .

We identify Caccioppoli sets which differ by a set of measure zero.
In particular we can always choose a suitable representant of $E$ 
so that the topological boundary coincides with the 
measure theoretic boundary (see for example \cite[proposition 12.19]{Mag12}):
\[
  \partial E = \ENCLOSE{x\in \RR^d\colon \forall \rho>0 \colon 0<\abs{E\cap B_\rho(x)}<\abs{B_\rho}}.
\]
Notice that $\partial E$ is a closed set.
For a general measurable set $E$ one can define a \emph{reduced boundary}
$\partial^* E\subset \partial E$ comprising the points 
where the approximate outer normal unit vector 
$\nu_E$ can be defined so that $D\mathbbm{1}_E = \nu_E \cdot \Abs{D\mathbbm{1}_E}$. 
When $E$ is a Caccioppoli set one has $P(E,B) = \H^{d-1}\enclose{B\cup\partial^* E}$, 
for every borel set $B$. 

\begin{definition}[partition]
Let $\vec E = (E_1,\dots, E_{N})$ be an $N$-uple of measurable subsets of $\RR^d$, 
and $\Omega$ an open set.
We say that $\vec E$ is an $N$-\emph{partition} (of $\Omega$), with {\it regions} 
$E_1,\dots, E_{N}$, if $\abs{E_k\cap E_j\cap \Omega}=0$ for every $k\neq j$
and $\abs{\Omega\setminus \bigcup E_k}=0$. 
\end{definition}
The \emph{boundary} $\partial \vec E$ of a partition is the set of all interfaces 
between the regions:
\[
  \partial \vec E \defeq \bigcup_{k=1}^N \partial E_k.
\]

We define the \emph{perimeter} of $\vec E$ on any borel set $B\subset \RR^d$ 
as
\[
  P(\vec E,B) \defeq \frac 1 2 \sum_{k=1}^{N} P(E_k,B),\qquad P(\vec E) = P(\vec E, \RR^d).
\]
This quantity represents the $(d-1)$-dimensional surface area of the interfaces 
between the regions $E_k$ inside $B$.
In fact when $\vec E$ is sufficiently regular 
(namely when $\H^{d-1}(\partial E_k\setminus\partial^*E_k)=0$ for all $k$) 
we have $P(\vec E,B) = \H^{d-1}(\partial \vec E\cap B)$.

Notice that we do not require the regions of a partition to have positive measure.
If some of the regions have zero measure we say that the partition is \emph{improper},
otherwise we say that it is \emph{proper}. 
Improper partitions are useful to describe the limit of a sequence of proper partitions 
when the measure of some region goes to zero. 
Handling of improper regions is one of the difficulties we had to face in 
Theorem~\ref{th:volume_fixing_variations_alternative} and
Theorem~\ref{th:closure}.

\begin{definition}[locally isoperimetric partition]\label{def:isoperimetric}
We say that an  $N$-partition $\vec E=(E_1,\dots,E_{N})$ of $\Omega\subset \RR^d$ is a 
\emph{locally isoperimetric partition} if for every compact 
set $B\subset \Omega$ given any partition $\vec F=(F_1,\dots, F_{N})$
such that for all $k$ one has $\abs{(E_k\triangle F_k)\setminus B}=0$ and
$\abs{E_k} = \abs{F_k}$ one has
\[
P(\vec E,B) \leq P(\vec F_k,B).
\]
\end{definition}

If $\vec E = (E_0,E_1,\dots, E_N)$ is 
an $(N+1)$-partition of $\RR^d$
such that $\abs{E_k}<+\infty$ for all $k\neq 0$
we say that $\vec E$ is an 
$N$-\emph{cluster} in $\RR^d$.
The sets $E_k$ are called regions of the cluster $\vec E$.
The region $E_0$ necessarily has infinite measure ad hence 
is always unbounded. 
It is usually called the \emph{exterior region}.
The cluster is determined by the $N$-uple $(E_1,\dots,E_N)$
of the finite regions
(and this is the usual definition in the literature)
because $E_0$ is uniquely determined as the complement 
of the union of the other regions.
We choose to add $E_0$ to the definition so that $N$-clusters can be regarded 
as a particular case of $(N+1)$-partitions.

Notice that if a partition has at least two regions with infinite measure 
(we will say: \emph{infinite} regions) then its perimeter is always $+\infty$. 
Clusters, instead, can have finite perimeter in the whole space, 
hence it is natural to give a \emph{global} notion of minimizer for clusters.

\begin{definition}[isoperimetric cluster]
We say that the $N$-cluster $\vec E=(E_0,\dots,E_N)$ is \emph{isoperimetric}
if for every other cluster $\vec F=(F_0,\dots,F_N)$ 
with $\abs{F_k} = \abs{E_k}$ for all $k\neq 0$ one has
\[
  P(\vec E) \le P(\vec F).
\]
\end{definition}

Clearly every isoperimetric $N$-cluster $\vec E$ is also a locally isoperimetric  
$(N+1)$-partition. 
The converse statement is also true but requires a proof, see Proposition~\ref{prop:equivalence}.

In the following theorem, we summarize some regularity results for locally isoperimetric partitions, 
that can be proven with minor modifications from the analogue statements for isoperimetric clusters 
(see the Appendix for a sketch of some of the proofs).

\begin{theorem}
\label{th:recall}
Let $\vec E=(E_1,\dots,E_{N})$ be a locally isoperimetric $N$-partition in $\RR^d$. 
Then $\partial \vec E$ is smooth outside a closed \emph{singular} set of Hausdorff dimension at most $d-2$: 
each interface is an analytic hypersurface with locally constant mean curvature. 
The mean curvature is zero between two infinite regions.

Moreover the following density estimates hold:
\[
  c_0\omega_d r^d\le |E_k\cap B(x,r)|\le c_1 \omega_d r^d
\]
for all $x\in\partial \vec E$ and $r<r_0$,
with $c_0=c_0(d,N)$, $c_1= c_1(d,N)$ and $r_0 = r_0(\vec E)$.

In particular, if $\abs{E_k}<+\infty$ then $E_k$ is bounded.

\end{theorem}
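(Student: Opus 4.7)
My plan is to reduce the regularity assertions to the classical theory of almost-minimizers of perimeter (Almgren, Taylor, Maggi) by absorbing the volume constraint into a tame $O(r^d)$ error term. Following Almgren's original construction, for each region $E_k$ of positive and finite measure I would fix a point of density $1$ and build a one-parameter family $\Phi^k_t$ of compactly supported diffeomorphisms localised near that point, so that their derivatives at $t=0$ span the space of volume variations of $\vec E$ restricted to such regions. Infinite regions require no separate construction, since compactly supported modifications automatically preserve their (infinite) volume; improper regions contribute nothing to the perimeter and can be ignored.

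With the $\Phi^k_t$ in hand, any competitor $\vec F$ differing from $\vec E$ inside a small ball $B_r$ can be post-composed with a small combination of the $\Phi^k_t$ supported outside $B_r$ to restore all finite volumes, at perimeter cost $O(r^d)$. This exhibits $\vec E$ as a local perimeter almost-minimizer in the sense of Almgren, so his partition regularity theorem yields that $\partial\vec E$ is an analytic hypersurface away from a closed singular set of Hausdorff dimension at most $d-2$. Testing the first variation of the perimeter against compactly supported vector fields and correcting by the $\Phi^k_t$ then shows that the mean curvature on each interface is locally constant and equals the difference $\lambda_i-\lambda_j$ of two Lagrange multipliers, one per adjacent region. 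For an interface between two infinite regions neither side carries an effective volume constraint under compactly supported variations, so $\lambda_i$ and $\lambda_j$ both vanish and the mean curvature is zero.

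The density estimates follow from the standard infiltration argument in this almost-minimal framework. If $\abs{E_k\cap B(x,r)}$ were below $c_0\omega_d r^d$ at some $x\in\partial\vec E$, one could delete this mass by reassigning $E_k\cap B(x,r)$ to a neighbouring region and redistribute the lost volume with $\Phi^k_t$, saving a perimeter of order $r^{d-1}$ against a bulk cost of order $r^d$, contradicting minimality for small $r$; the upper bound follows by the same argument applied to $\bigcup_{j\neq k}E_j$. Finally, for $E_k$ of finite measure the lower density bound produces, along $\partial E_k$, disjoint balls of radius $r_0/3$ each carrying mass $\ge c_0\omega_d(r_0/3)^d$, so $\partial E_k$ unbounded would force $\abs{E_k}=\infty$; hence $E_k$ is bounded. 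The main obstacle I expect is the clean construction of volume-fixing variations in the presence of improper partitions or when a finite region lacks an obviously usable density-$1$ point; this is exactly the degeneracy handled by the preparatory Theorem~\ref{th:volume_fixing_variations_alternative} cited in the excerpt, whose availability makes the cluster proofs transfer with only cosmetic changes.
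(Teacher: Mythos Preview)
Your proposal is correct and follows essentially the same route as the paper. The paper does not prove Theorem~\ref{th:recall} in the main text but defers to the Appendix, where it sketches precisely the chain you describe: classical volume-fixing variations (Theorem~\ref{th:volume_fixing_variations}) leading to Almgren's Lemma (Lemma~\ref{lm:almg}), which exhibits $\vec E$ as a perimeter almost-minimizer; then the Infiltration Lemma (Lemma~\ref{lm:inflt}) for the density bounds; and finally Corollary~\ref{cor:bnd} for boundedness of finite-measure regions via the lower density estimate, by the same covering argument you outline. One small discrepancy: for the regularity of a single locally isoperimetric partition the paper invokes the classical volume-fixing Theorem~\ref{th:volume_fixing_variations} (which assumes all regions have positive measure and simply drops empty ones), reserving the alternative Theorem~\ref{th:volume_fixing_variations_alternative} for the closure arguments where improper limits genuinely arise; your citing the latter here is harmless but not the tool the paper actually uses at this point.
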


 \begin{lemma}[upper estimate of perimeter]   \label{lm:stima1}
 Let $\vec E=(E_1,\dots,E_{N})$ be a locally isoperimetric $N$-partition in $\RR^d$.
 Then, for every $\vec x\in \RR^d$ and any $R>0$,
 one has
 \[
   P(\vec E, B_R(\vec x)) \le C_0 \cdot R^{d-1}
 \]
 with $C_0=C_0(d,N)$ a constant not depending on $\vec E$, $\vec x$ or $R$.
 \end{lemma}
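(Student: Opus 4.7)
The plan is to test $\vec E$ against a cheap explicit competitor supported in the closed ball $\overline{B_R(\vec x)}$. Without loss of generality assume $\vec x = \vec 0$, take $B \defeq \overline{B_R}$ as the compact competition set in Definition~\ref{def:isoperimetric}, and construct a partition $\vec F$ that coincides with $\vec E$ outside $B_R$, has the same prescribed volumes, and visibly satisfies $P(\vec F, \overline{B_R}) \le C_0(d,N) R^{d-1}$. Local isoperimetricity then forces $P(\vec E, B_R) \le P(\vec E, \overline{B_R}) \le C_0 R^{d-1}$.

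For the interior, set $v_k \defeq \abs{E_k \cap B_R}$, so that $v_1+\cdots+v_N = \omega_d R^d$. I would slice $B_R$ by $N-1$ parallel hyperplanes orthogonal to a fixed direction, positioning them via the intermediate value theorem applied to the continuous half-ball volume function $t \mapsto \abs{B_R \cap \{x_1\le t\}}$, so that the resulting slabs $G_1,\dots,G_N$ satisfy $\abs{G_k} = v_k$. Each separating interface is a disk of $(d-1)$-dimensional area at most $\omega_{d-1}R^{d-1}$, hence the auxiliary partition $\vec G$ of $B_R$ obeys $P(\vec G, B_R) \le (N-1)\omega_{d-1} R^{d-1}$.

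Now define $F_k \defeq G_k \cup (E_k \setminus B_R)$. This yields a partition of $\RR^d$ with $\abs{F_k} = \abs{E_k}$ for every $k$: when $\abs{E_k}=\infty$ the identity is automatic since $E_k\setminus B_R$ is already infinite, and when $\abs{E_k}<\infty$ it follows from $\abs{G_k} = v_k = \abs{E_k\cap B_R}$. Splitting $P(\vec F, \overline{B_R}) = P(\vec F, B_R) + P(\vec F, \partial B_R)$, the interior term equals $P(\vec G, B_R) \le (N-1)\omega_{d-1} R^{d-1}$, while the sphere term quantifies the mismatch between the slab label (from inside) and the $\vec E$-label (from outside). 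At each point of disagreement exactly two regions $F_k$ pick up a reduced-boundary piece, so after the $\tfrac12$ factor in the definition of partition perimeter the sphere contribution is at most $\H^{d-1}(\partial B_R) = d\omega_d R^{d-1}$. Altogether $P(\vec F, \overline{B_R}) \le C_0 R^{d-1}$ with $C_0 \defeq (N-1)\omega_{d-1} + d\omega_d$, depending only on $d$ and $N$.

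The conclusion follows by applying Definition~\ref{def:isoperimetric} to $\vec F$ and $B = \overline{B_R}$, which is legitimate because $E_k\triangle F_k \subset B_R \subset B$ and the volumes match. The only step requiring any care is the sphere accounting: we make no effort to align the slab partition with the trace of $\vec E$ on $\partial B_R$, but the crude estimate $\H^{d-1}(\partial B_R) = d\omega_d R^{d-1}$ absorbs the mismatch at precisely the correct scale, so no more delicate matching procedure is needed.
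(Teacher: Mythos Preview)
Your proof is correct and follows essentially the same strategy as the paper: build a competitor by slicing the ball into parallel slabs of the correct volumes, then bound its perimeter by the slab interfaces plus the full sphere area. The only cosmetic difference is that the paper performs the construction in a slightly smaller ball $B_\rho$ with $\rho<R$ and lets $\rho\to R^-$ to absorb the annulus contribution, whereas you work directly with $\overline{B_R}$; this gives you a marginally sharper constant $(N-1)\omega_{d-1}+d\omega_d$ in place of the paper's $2N\omega_{d-1}+d\omega_d$, but the argument is the same.
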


 \begin{proof}
To simplify the notation suppose $\vec x=0$.
For every $\rho <R$
we can rearrange the regions of $\vec E$ inside 
 the ball $B_R$ into horizontal slices 
 to construct a partition 
 $\vec F=(F_1,\dots,F_{N})$
 such that for all $k$ one has
 \begin{enumerate}
   \item $F_k \setminus B_\rho = E_k\setminus B_\rho$,
   \item $\abs{F_k\cap B_\rho} = \abs{E_k\cap B_\rho}$,
   \item $\partial^*(F_k\cap B_\rho) \subset \partial B_\rho \cup \Pi_k \cup \Pi_k'$
   where $\Pi_k$ and $\Pi_k'$ are parallel $(d-1)$-dimensional planes.
 \end{enumerate}
 Since $\vec E$ is locally isoperimetric
 we have $P(\vec E,B_R)\le P(\vec F,B_R)$ 
 hence 
 \begin{align*}
   2P(\vec E, B_R)
  &\le 2P(\vec E,B_R\setminus \overline{B_\rho})
   + 2\H^{d-1}(\partial B_\rho)
   + 2\sum_{k=1}^{N} \H^{d-1}((\Pi_k\cup\Pi_k')\cap B_\rho)
   \\
  &\le 2P(\vec E,B_R\setminus \overline{B_\rho})
   + 2d\omega_d \rho^{d-1}
  + 4N \omega_{d-1} \rho^{d-1}.
\end{align*}
 As $\rho \to R^-$ we have $P(\vec E,B_R\setminus \overline{B_\rho})\to 0$
 and the result follows.
 \end{proof}
 
The following lemma uses the coarea formula to estimate the cost in perimeter
when two Caccioppoli sets are joined together along the surface of a sphere.
\begin{lemma}[glueing]\label{lm:glueing}
  Let $E$ and $F$ be Caccioppoli sets in $\RR^d$.
  Define 
  \[
     G_\rho = (E\cap B_\rho) \cup (F\setminus B_\rho).
  \]
  Then for all $0<r<R$ one has 
  \begin{align*}
  \int_r^R P(G_\rho, \partial B_\rho )\, d\rho
  & =\int_r^R \Enclose{P(G_\rho,B_R) - P(E,B_\rho) - P(F,B_R\setminus B_\rho)}\, d\rho\\
  & = \abs{(E \triangle F)\cap (B_R\setminus B_r)},
  \end{align*}
  so that the set of $\rho \in (r,R)$ 
  for which the estimate
  \[ 
    P( G_\rho, \partial B_\rho) \le \frac{\vert (E\triangle F) \cap (B_R\setminus B_r)\vert}{R-r}
  \]
  holds, has positive measure.
\end{lemma}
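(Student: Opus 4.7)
My plan is to combine, for almost every radius $\rho$, a pointwise perimeter decomposition with the coarea formula.

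First, for almost every $\rho\in(r,R)$, I would establish the additivity
\[
  P(G_\rho, B_R) = P(E,B_\rho) + P(F, B_R\setminus \overline{B_\rho}) + P(G_\rho,\partial B_\rho).
\]
Since $G_\rho = E$ on $B_\rho$ and $G_\rho = F$ on $\RR^d\setminus \overline{B_\rho}$, locality of the total variation measure $\Abs{D\mathbbm{1}_{G_\rho}}$ immediately gives the first two summands. The sphere $\partial B_\rho$ is Lebesgue negligible, so $P(F, B_R\setminus B_\rho)$ and $P(F, B_R\setminus \overline{B_\rho})$ differ only at the at-most countable set of $\rho$ for which $\Abs{D\mathbbm{1}_F}$ charges $\partial B_\rho$, and likewise for $E$. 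Integrating this identity over $\rho\in(r,R)$ therefore yields the first equality of the lemma.

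Next I would identify the sphere term with the surface measure of the symmetric difference. By the trace theorem for $BV$ functions on the smooth domain $B_\rho$, the interior trace of $\mathbbm{1}_{G_\rho}$ on $\partial B_\rho$ equals the interior trace of $\mathbbm{1}_E$ and the exterior trace equals the exterior trace of $\mathbbm{1}_F$; by a standard slicing argument (essentially Fubini along radii) these traces coincide with the pointwise restrictions $\mathbbm{1}_E\vert_{\partial B_\rho}$ and $\mathbbm{1}_F\vert_{\partial B_\rho}$ for a.e.\ $\rho$. The reduced boundary of $G_\rho$ on $\partial B_\rho$ then consists, up to $\H^{d-1}$-null sets, of the points where $\mathbbm{1}_E$ and $\mathbbm{1}_F$ differ on the sphere, so
\[
  P(G_\rho,\partial B_\rho) = \H^{d-1}\bigl((E\triangle F)\cap\partial B_\rho\bigr) \quad\text{for a.e.\ } \rho\in(r,R).
\]
The coarea formula applied to the Lipschitz function $x\mapsto\abs{x}$, which has unit gradient off the origin, tested against $\mathbbm{1}_{E\triangle F}$, yields
\[
  \abs{(E\triangle F)\cap (B_R\setminus B_r)} = \int_r^R \H^{d-1}\bigl((E\triangle F)\cap\partial B_\rho\bigr)\,d\rho,
\]
which combined with the previous display produces the second equality.

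Finally, the existence of a positive-measure set of $\rho$ on which the pointwise estimate holds follows from a standard averaging argument: if the reverse strict inequality $P(G_\rho,\partial B_\rho) > \abs{(E\triangle F)\cap(B_R\setminus B_r)}/(R-r)$ held for a.e.\ $\rho\in(r,R)$, integrating over $(r,R)$ would contradict the equality just proven. The main obstacle is the identification $P(G_\rho,\partial B_\rho)=\H^{d-1}((E\triangle F)\cap\partial B_\rho)$: it requires the $BV$ trace theorem on balls together with a slicing argument to recognize the $BV$ traces as pointwise restrictions of $\mathbbm{1}_E$ and $\mathbbm{1}_F$ for almost every radius.
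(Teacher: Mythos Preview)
Your proposal is correct and follows essentially the same route as the paper: decompose $P(G_\rho,B_R)$ additively for a.e.\ $\rho$, identify the sphere contribution $P(G_\rho,\partial B_\rho)$ with $\H^{d-1}((E\triangle F)\cap\partial B_\rho)$, integrate via coarea, and conclude by averaging. The only cosmetic difference is that the paper phrases the identification of the sphere term through density-one points $E^1,F^1$ and reduced-boundary formulas (citing \cite[Chapter~15]{Mag12}), whereas you invoke the $BV$ trace theorem and slicing; these are the same machinery.
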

\begin{proof}
  For every $\rho$ by additivity of measures one has $P(G_\rho,\partial B_\rho)=
  P(G_\rho, B_R) -P(E, B_\rho )-P(F,B_R\setminus \overline{B_\rho})$. 
  Since $E$ and $F$ are Caccioppoli sets for almost all $\rho\in [r,R]$ we have 
  \[ 
    P(E, \partial B_\rho )= P(F, \partial B_\rho)=0.
  \]
  In particular $P(F,B_R\setminus \overline{B_\rho})=P(F,B_R\setminus {B_\rho})$.
  Moreover for all these $\rho$, 
  denoting by $E^1$ and $F^1$ the points of density $1$ of $E$ and 
  $F$ respectively, we have (see for example \cite[Chapter 15]{Mag12}):
  \begin{align*}
  P(G_\rho, \partial B_\rho) 
  &= \H^{d-1} (\partial^* G_\rho\cap \partial B_\rho)\\
  &=
  \H^{d-1}(((\partial^*E)\cap B_\rho\cap\partial B_\rho)\triangle
  ((\partial^*F)\setminus B_\rho\cap\partial B_\rho))=
  \\
  &=\H^{d-1}((\partial^*(E\cap B_\rho)\cap\partial B_\rho)\triangle
  (\partial^*(F\setminus B_\rho)\cap\partial B_\rho))
  =\\
  &= \H^{d-1} \left((E^1\cap \partial B_\rho) \triangle (F^1\cap \partial B_\rho) \right)
  \end{align*}
  Integrating in $d\rho$ on the interval $[r,R]$, 
  and using coarea formula, we get the desired equality.
\end{proof}
     
\begin{lemma}[perimeter estimate]
\label{lm:stima2}
Let $\vec E=(E_0,\dots, E_{N})$ be a locally isoperimetric 
$(N+1)$-partition in $\RR^d$ which is an $N$-cluster, i.e.\ 
$m = \abs{E_1}+\dots+\abs{E_{N}}<+\infty$.
Then 
\[
  P(\vec E) \le C < +\infty  
\]
where $C=C(d,N,m)$ does not depend on $\vec E$.
\end{lemma}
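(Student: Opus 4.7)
The strategy is to compare $\vec E$ against a competitor constructed by packing all of its finite regions inside a fixed ball $B_{R_0}$ and gluing this configuration to $\vec E$ across a large sphere. Concretely, I fix $R_0>0$ large enough that $N$ disjoint balls of total volume at most $m$ fit inside $B_{R_0}$. For each $\rho>R_0$ I introduce the auxiliary partition $\vec F^\rho=(F_0^\rho,F_1^\rho,\dots,F_N^\rho)$ where each $F_k^\rho$ ($k\ge 1$) is a ball contained in $B_{R_0}$, the $F_k^\rho$ are pairwise disjoint, $\abs{F_k^\rho}=\abs{E_k\cap B_\rho}$, and $F_0^\rho\defeq \RR^d\setminus\bigcup_{k\ge 1}F_k^\rho$. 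Then I glue $\vec F^\rho$ with $\vec E$ across $\partial B_\rho$:
\[
  G_k^\rho\defeq F_k^\rho\cup(E_k\setminus B_\rho)\ \text{ for }k\ge 1,\qquad G_0^\rho\defeq(F_0^\rho\cap B_\rho)\cup(E_0\setminus B_\rho).
\]
Since $\abs{G_k^\rho}=\abs{F_k^\rho}+\abs{E_k\setminus B_\rho}=\abs{E_k}$ for every $k\ge 1$, and $\vec G^\rho$ agrees with $\vec E$ outside $\overline{B_\rho}$, the partition $\vec G^\rho$ is a valid competitor in Definition~\ref{def:isoperimetric}.

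To estimate $P(\vec G^\rho,\overline{B_\rho})$ I split it into the interior and boundary contributions. Inside $B_\rho$ the partition $\vec G^\rho$ coincides with $\vec F^\rho$, and the isoperimetric inequality for balls of volume at most $m$ gives
\[
  P(\vec G^\rho,B_\rho)=P(\vec F^\rho)\le Nd\,\omega_d^{1/d}m^{(d-1)/d}.
\]
On the sphere $\partial B_\rho$ the balls $F_k^\rho$ are confined to $B_{R_0}\subset B_\rho$ and contribute nothing; the only interfaces there come from the traces of the finite regions $E_k$, and a density argument (analogous to the computation inside the proof of Lemma~\ref{lm:glueing}) yields $P(\vec G^\rho,\partial B_\rho)=\sum_{k=1}^{N}\H^{d-1}(E_k^{1}\cap\partial B_\rho)$, where $E_k^{1}$ denotes the set of density-one points of $E_k$. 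The coarea formula applied to $x\mapsto\abs{x}$ then gives, for every $r>R_0$,
\[
  \int_r^{r+1}P(\vec G^\rho,\partial B_\rho)\,d\rho
  =\Bigl|\textstyle\bigcup_{k=1}^{N} E_k\cap (B_{r+1}\setminus B_r)\Bigr|\le m,
\]
so there exists a radius $\rho\in(r,r+1)$ with $P(\vec G^\rho,\partial B_\rho)\le m$.

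For such $\rho$, Definition~\ref{def:isoperimetric} applied with the compact set $\overline{B_\rho}$ gives
\[
  P(\vec E,\overline{B_\rho})\le P(\vec G^\rho,\overline{B_\rho})\le Nd\,\omega_d^{1/d}m^{(d-1)/d}+m,
\]
a bound independent of $\rho$. Letting $r\to+\infty$ along a sequence of such good radii and using $P(\vec E)=\lim_{\rho\to\infty}P(\vec E,\overline{B_\rho})$ produces the desired constant $C=Nd\,\omega_d^{1/d}m^{(d-1)/d}+m$, depending only on $d$, $N$, $m$.

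The main delicate point, and the reason I cannot simply invoke Lemma~\ref{lm:glueing}, is the volume constraint in Definition~\ref{def:isoperimetric}: a naive fixed choice $\abs{F_k}=\abs{E_k}$ would produce a mismatch $\abs{G_k^\rho}=\abs{E_k}+\abs{E_k\setminus B_\rho}\neq\abs{E_k}$, so the balls $F_k^\rho$ must shrink with $\rho$. This forces the ``inner set'' of the glueing to depend on $\rho$, which is why the boundary contribution on $\partial B_\rho$ must be estimated directly via coarea rather than by quoting the glueing lemma verbatim.
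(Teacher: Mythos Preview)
Your argument is correct and follows essentially the same strategy as the paper: replace the finite regions by disjoint balls of the right volumes inside a fixed ball, glue with $\vec E$ across a large sphere $\partial B_\rho$, and control the gluing cost by coarea to find a good $\rho$.

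Two minor remarks. First, the paper squeezes out the sphere contribution by an $\eps$-argument: it first chooses $R$ so large that $\sum_{k\ge 1}\abs{E_k\setminus B_R}<\eps$, then picks $\rho\in(R,R+1)$ via the glueing lemma so that the trace term is $<\eps$, and finally lets $\eps\to 0$ to obtain the clean bound $P(\vec E)\le C_d N m^{(d-1)/d}$. Your version instead keeps the extra $+m$, which is perfectly acceptable since the statement only asks for some $C(d,N,m)$; had you observed that $\sum_{k\ge 1}\abs{E_k\cap(B_{r+1}\setminus B_r)}\to 0$ as $r\to\infty$, you would recover the paper's sharper constant. Second, your concern in the last paragraph about not being able to invoke the glueing lemma directly is unfounded: since $F_k^\rho\subset B_{R_0}$ and you only glue across spheres $\partial B_\rho$ with $\rho>R_0$, the ``inner set'' near $\partial B_\rho$ is simply $\emptyset$, independently of $\rho$; this is exactly how the paper applies Lemma~\ref{lm:glueing} (with $\emptyset$ in place of $E$ and $E_k$ in place of $F$). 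Your direct coarea computation is just an unrolled version of that application.
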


\begin{proof}
Let us fix $\eps>0$.
Since $\abs{E_k}<+\infty$ for $k \neq 0$,
we can find a radius ${R} \geq \frac 1 \eps$ 
so large that $\abs{E_k\setminus B_R}<\eps/N$. 
Further enlarging $R$, we can also assume that $B_R$ compactly 
contains $N$ pairwise disjoint balls of volumes $\vert E_k\vert$, for $k=1,\dots, N$.
Since the partition has locally finite perimeter one has that 
$\H^{d-1}(\partial B_\rho\cap \partial^* E_k)=0$ 
for all $k\geq 0$.
and almost all $\rho>0$.
Among these $\rho>0$, by Lemma \ref{lm:glueing} 
(applied with $\emptyset$ in place of $E$, 
with $E_k$ in place of $F$, and $[R,R+1]$ in place of $[r,R]$) 
we can find $\rho=\rho(\eps) \in (R,R+1)$ such that
$P(E_k\setminus {B_\rho},B_{R+1})-P(E_k,B_{R+1}\setminus {B_R})=P(E_k\setminus B_\rho,\partial B_\rho)
<\eps/N$, for all $k\geq 1$. 
Hence we can consider a new partition $\vec F=(F_0,\dots, F_{N})$
such that: 
\begin{enumerate}
  \item $F_k \setminus B_\rho = E_k\setminus B_\rho$, for all $k=0,\dots, N$,
  \item $\abs{F_k} = \abs{E_k}$, for all $k=0,\dots N$,
  \item for $k\neq 0$ then the intersections $F_k \cap B_\rho$ are pairwise disjoint balls
    compactly contained in $B_\rho$ 
    with measure $\abs{F_k\cap B_\rho}=\abs{E_k\cap B_\rho}$, 
  \item $\displaystyle F_0= \RR^d\setminus \bigcup_{k=1}^N F_k$,
\end{enumerate}
Note that since $F_k$ are Caccioppoli sets, 
and $\partial B_\rho$ can be viewed as the intersection of a decreasing sequence of 
open sets on which $P(\vec F, \cdot)$ is finite,
one has 
\[
  P(F_0,\partial B_\rho)=P\left(\bigcup_{k=1}^NF_k,\partial B_\rho\right)\le \sum_{k=1}^N P(F_k, \partial B_\rho ).
\]
Now, since $\vec E$ is locally isoperimetric, 
and taking in account that $\overline{B_{R}}$ 
is the intersection of a decreasing sequence of open sets 
on which $P(F_k,\cdot)$ is finite, we have
\begin{align*}
P(\vec E,\overline{B_{\rho}})
  & \le P(\vec F,\overline{B_{\rho}})
  =\frac 12 \sum_{k=0}^N P(F_k, \overline{B_{\rho}})\\
  &=\frac 12 \sum_{k=0}^N P(F_k,{B_{\rho}})+\frac 12 \sum_{k=0}^N P(F_k,\partial{B_{\rho}})\\ 
  &=C_d\sum_{k=1}\vert E_k\cap B_\rho\vert^{\frac {d-1} d}+\frac 12 \sum_{k=0}^N P(F_k,\partial{B_{\rho}}) 
  \le C_dNm^{\frac{d-1}d} +{\eps}.
\end{align*}
Hence, since $\rho>R\geq \frac1 \eps$, letting $\eps\to 0^+$, we get 
\[ 
  P(\vec E) \le C_dNm^{\frac{d-1}d}.
\] 
\end{proof}

\begin{theorem}[volume fixing, possibly improper, variations]
\label{th:volume_fixing_variations_alternative}  
  Let $\Omega$ be an open subset of $\RR^d$.
  Let $\vec F = (F_1,\dots, F_N)$ be a (possibly improper) $N$-partition 
  of $\Omega$.
  For all $k=1,\dots,N$ with
  $\abs{F_k}>0$, let $x_k\in \Omega$, and $\rho_k>0$, be given so that
  the balls $B_{\rho_k}(x_k)$ are contained in $\Omega$, 
  are pairwise disjoint
  and $\abs{F_k\cap B_{r}(x_k)}>\frac 1 2 \omega_d r^d$
  for all $r\le \rho_k$.
 
  Let $A=\bigcup_k B_{\rho_k}(x_k)$.
  Then for every $\vec a = (a_0,\dots, a_N)$
  with $a_k \ge -\abs{F_k\cap B_{\rho_k}(x_k)}$ when $\abs{F_k}>0$,
  $a_k\ge 0$ when $\abs{F_k}=0$, 
  and $\sum_{k=1}^N a_k = 0$,
  there exists a partition $\vec F' = (F'_1, \dots, F'_N)$
  such that for all $k=1,\dots, N$:
  \begin{enumerate}
    \item $F_k' \triangle F_k \subset A$,
    \item $\abs{F'_k \cap A} = \abs{F_k \cap A} + a_k$,
    \item $P(F'_k, A) \le P(F_k,A) + C_1 \cdot 
      \displaystyle\sum_{j=1}^N \abs{a_j}^{1-\frac 1 d}$,
  \end{enumerate}
  with $C_1=C_1(d,N)$ not depending on $\vec F$.  
\end{theorem}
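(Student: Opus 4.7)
My approach is to assemble $\vec F'$ from a small number of elementary pair transfers, each realized as a surgery localized inside one source ball. First, since $\sum_k a_k=0$, I split the indices into sources $S=\{k:a_k<0\}$ and sinks $P=\{k:a_k>0\}$. The bounds $a_k\ge -|F_k\cap B_{\rho_k}(x_k)|$ (when $|F_k|>0$) and $a_k\ge 0$ (when $|F_k|=0$) force every source to have $|F_k|>0$, hence an associated ball $B_{\rho_k}(x_k)$, while improper regions can appear only as sinks. A standard bipartite flow decomposition then writes
\[
  \vec a = \sum_{\ell=1}^{L} t_\ell(\vec e_{j_\ell} - \vec e_{k'_\ell}),
\]
with $L\le N-1$ transfers, arranged so that the outflows from each source $k'$ sum to $|a_{k'}|$ and the inflows into each sink $j$ sum to $a_j$.

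For each source $k'$, collect the transfers leaving it, say of sizes $t_1,\dots,t_m$ routed to sinks $j_1,\dots,j_m$, and choose nested radii $0=r_0<r_1<\dots<r_m$ satisfying $|F_{k'}\cap B_{r_\ell}(x_{k'})|=t_1+\cdots+t_\ell$. Continuity of $r\mapsto|F_{k'}\cap B_r(x_{k'})|$ together with the inequality $\sum_\ell t_\ell=|a_{k'}|\le|F_{k'}\cap B_{\rho_{k'}}(x_{k'})|$ guarantees these radii exist in $(0,\rho_{k'}]$, and the density hypothesis $|F_{k'}\cap B_r(x_{k'})|\ge \tfrac12\omega_d r^d$ gives the decisive scale estimate $r_m\le(2|a_{k'}|/\omega_d)^{1/d}$. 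I then define $\vec F'=\vec F$ outside $A$, and inside each source ball I relabel the mass $F_{k'}\cap(B_{r_\ell}(x_{k'})\setminus B_{r_{\ell-1}}(x_{k'}))$ as belonging to $F_{j_\ell}'$, leaving the non-$F_{k'}$ portions of the ball unchanged. Disjointness of the source balls makes the construction unambiguous; a direct count yields $|F_k'\cap A|=|F_k\cap A|+a_k$ for every $k$.

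For the perimeter estimate, the new interfaces of $\vec F'$ sit on the spheres $\partial B_{r_\ell}(x_{k'})$, each of area $d\omega_d r_\ell^{d-1}\le C(d)|a_{k'}|^{(d-1)/d}$. Since at most $N$ shells appear per source and there are at most $N$ sources, summation yields the bound
\[
  P(\vec F',A) \le P(\vec F,A) + C_1(d,N)\sum_j |a_j|^{1-1/d}
\]
on the total perimeter.

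The hard part will be to upgrade this into the \emph{per-region} estimate stated in the theorem. While the total change is clearly controlled by the spherical surface areas, a single sink $F_{j_\ell}'$ inherits not only new spherical boundaries but also the old interfaces of $F_{k'}$ inside the shell $B_{r_\ell}\setminus B_{r_{\ell-1}}$ with whatever other regions were present there, and these a priori unrelated contributions must also be bounded by $r_m^{d-1}$. I expect to handle this either by selecting each $r_\ell$ in a set of full one-dimensional measure where, by the coarea formula, $\mathcal H^{d-1}(F_{k'}\cap\partial B_{r_\ell})$ is comparable with the spherical area, or by refining the surgery so that the $F_{k'}$-boundary inside each shell is bounded through the density-$\tfrac12$ hypothesis. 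The improper regions cause no additional trouble since they appear only as sinks and are grown from shell assignments.
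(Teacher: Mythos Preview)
Your flow decomposition and nested-radii surgery do give the \emph{total} perimeter bound $P(\vec F',A)\le P(\vec F,A)+C_1\sum_j|a_j|^{1-1/d}$, since relabelling creates new interface only on the spheres $\partial B_{r_\ell}$. But the per-region bound (3) genuinely fails for this construction, and neither of your proposed repairs can rescue it. The obstruction is not the sphere trace $\mathcal H^{d-1}(F_{k'}\cap\partial B_{r_\ell})$ (which coarea would indeed control) but the interior interface $P(F_{k'},B_{r_m}(x_{k'}))$: when you hand $F_{k'}\cap(B_{r_\ell}\setminus B_{r_{\ell-1}})$ to a sink $j_\ell$, that sink inherits every interface that $F_{k'}$ had with \emph{third} regions inside the shell, and for an arbitrary partition this quantity is completely uncontrolled. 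Concretely, take $N=3$, $a_1=-1$, $a_2=+1$, $a_3=0$, with $F_2\cap B_{\rho_1}(x_1)=\emptyset$ and $F_3\cap B_{r_1}(x_1)$ a small-measure but wildly oscillatory set; then $P(F_2',B_{\rho_1}(x_1))\ge P(F_3,B_{r_1}(x_1))$, which can be made as large as you like while all hypotheses (including the density-$\tfrac12$ condition on $F_1$) are retained. The density hypothesis gives no bound whatsoever on $P(F_{k'},B_r)$, so your second fix is a dead end as well.

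The paper avoids this with a cruder surgery: for each source $j\in J=\{k:a_k<0\}$ pick $r_j\le\rho_j$ with $|F_j\cap B_{r_j}(x_j)|=-a_j$, then \emph{erase every region} inside $\bigcup_{j\in J}B_{r_j}(x_j)$ and refill these balls with a partition $\vec C=(C_1,\dots,C_N)$ made of parallel-plane slices of volumes $|C_k|=a_k+|F_k\cap\bigcup_{j\in J} B_{r_j}|$ (one checks $|C_k|\ge 0$ and $\sum_k|C_k|=\sum_j|B_{r_j}|$). Now inside each $B_{r_j}$ every $F_k'$ is a single slab bounded by at most two hyperplanes and a portion of $\partial B_{r_j}$, so $P(F_k',\bar B_{r_j})\le(2\omega_{d-1}+d\omega_d)r_j^{d-1}$ outright, with no reference to the old geometry of $\vec F$ there; summing over $j\in J$ and using $r_j^{d-1}\le C(d)\,|a_j|^{1-1/d}$ yields (3) for each $k$ separately. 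The flow decomposition is unnecessary once the balls are emptied, since the slice volumes can be assigned freely.
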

\begin{remark}
 Notice that it is always possible to find such balls $B_{\rho_k}(x_k)$: 
 since $\abs{F_k}>0$ just 
  take any  point $x_k$ of full density in $F_k$, 
  and $\rho_k$ sufficiently small.
\end{remark}
\begin{remark}The previous theorem can be compared to the so called \emph{Volume fixing variations} theorem leading to \emph{Almgren's Lemma}
(see Appendix \ref{th:volume_fixing_variations}, \ref{lm:almg},  \cite[Theorem 29.14, Corollary 29.17]{Mag12}) with two important differences.
First of all we do not require the regions to have positive measure. 
This enables us to make a \emph{blow-down} with regions having measure 
going to zero (see the proof of Theorem~\ref{th:uniqueness}).
On the other hand our estimate (3) is weaker than in the usual volume fixing variations theorem,  
since in (3) we have an exponent $1 - \frac 1 d$ instead of $1$. 
If $\abs{F_k}=0$ the exponent $1-\frac 1 d$ is optimal in view of the isoperimetric inequality. 
Otherwise one could prove that the exponent 
$1-\frac 1 d$ can be in fact replaced with $1$ (this, however, 
requires a longer and more refined proof which we prefer to avoid here).
\end{remark} 

\begin{proof}[Proof of Theorem~\ref{th:volume_fixing_variations_alternative}]
  Let $J=\ENCLOSE{j\colon a_j<0}$.
For all $j\in J$ take $r_j$ such that $\abs{F_j\cap B_{r_j}(x_j)} = -a_j$.
By assumption $0 < -a_j \le \abs{F_j\cap B_{\rho_j}(x_j)}$ and hence 
$r_j$ exists and $0 < r_j\le \rho_j$.
Coinsider the sets $F_k'' = F_k\setminus \bigcup_{j\in J} B_{r_j}(x_j)$.
Clearly we have $\abs{F_k''\cap A} \le \abs{F_k\cap A}+a_k$ for all $k=1,\dots,N$ 
so that we are now required to 
add measure to each region. 
Since $\sum a_k = 0$ the total measure we need to add,
which is $\sum_k \abs{F_k\cap A} - \sum_k \abs{F_k''\cap A}$ is exactly equal to the total 
measure of the balls we have removed $\sum_{j\in J} \abs{B_{r_j}(x_j)}$. 
This means that it is possible to find a partition 
$\vec C = (C_1,\dots, C_N)$ of $\bigcup_{j\in J} B_{r_j}(x_j)$ with 
$\abs{C_k} = \abs{F_k\cap A} - \abs{F_k''\cap A} + a_k \ge 0$. 
So we consider the partition $\vec F'$ with regions $F_k' = F_k'' \cup C_k$ to obtain 
the desired volumes:
\[
  \abs{F_k'\cap A} = \abs{F_k\cap A} + a_k.  
\]
By construction $F_k'\triangle F_k \subset A$ for all $k=1,\dots,N$.
To estimate the perimeter we observe that if we choose $\vec C$ 
by making slices of the balls with parallel planes 
for each $j\in J$, we are adding at most $N$ slices, and also 
at most the perimeter of the ball. So the increase in perimeter is at most:
\[
 P(\vec F',\bar A)  - P(\vec F,\bar A) 
  \le \sum_{j\in J} \Enclose{N\omega_{d-1} r_k^{d-1} + d\cdot \omega_d r_k^{d-1}}.
\]
Since, by assumption, $\abs{a_j} =\abs{F_j\cap B_{r_j}(x_j)} \ge \frac 1 2 \omega_d r_j^d$
we have 
 \[
    r_j^{d-1} \le \frac{2}{\omega_d}\abs{a_j}^{1-\frac 1 d}
 \]
hence, as desired,
\[
  P(\vec F',\bar A)  - P(\vec F,\bar A) 
  \le \sum_{j\in J} 2\cdot \frac{N\omega_{d-1} + d\cdot \omega_d}{\omega_d} \abs{a_j}^{1-\frac 1 d}
  \le C_1 \sum_{j=1}^N \abs{a_j}^{1-\frac 1 d}.
\]
\end{proof}

\begin{proposition}[equivalence of isoperimetric clusters and locally isoperimetric partitions]
  \label{prop:equivalence}
  If $\vec E=(E_0, E_1, \dots, E_N)$ is an $N$-cluster in $\RR^d$ then 
  $\vec E$ is an isoperimetric $N$-cluster 
  if and only if $\vec E$ is a locally isoperimetric $(N+1)$-partition.
\end{proposition}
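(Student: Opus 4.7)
The easy direction, \emph{isoperimetric cluster $\Rightarrow$ locally isoperimetric partition}, is essentially immediate: given any compact $B$ and any partition-competitor $\vec F=(F_0,\dots,F_N)$ with $F_k\triangle E_k\subset B$ and $\abs{F_k}=\abs{E_k}$ for $k=1,\dots,N$, the identity $\abs{F_0}=\abs{E_0}=\infty$ holds automatically since $F_0\triangle E_0\subset B$ is bounded. Thus $\vec F$ is a legitimate cluster-competitor, the global bound $P(\vec E)\le P(\vec F)$ holds, and subtracting the common contribution on $\RR^d\setminus B$ (where $\vec E$ and $\vec F$ coincide) yields $P(\vec E,B)\le P(\vec F,B)$.

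For the converse, let $\vec E$ be a locally isoperimetric $(N+1)$-partition that is an $N$-cluster, and let $\vec F$ be an arbitrary cluster with $\abs{F_k}=\abs{E_k}$ for $k\ne 0$. The plan is to truncate $\vec F$ to a large ball, cheaply restore the resulting volume deficits via the variations theorem, and then feed the resulting compactly supported perturbation of $\vec E$ into the local minimality hypothesis. Fix $\eps>0$. By Theorem~\ref{th:recall} every finite $E_k$ is bounded, hence there is $R>0$ with $\bigcup_{k\ne 0}E_k\subset B_R$; enlarging $R$ if needed, $\sum_{k\ne 0}\abs{F_k\setminus B_R}<\eps$. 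Integrating $\H^{d-1}(F_k^1\cap\partial B_\rho)$ over $\rho\in(R,R+1)$ in the spirit of Lemma~\ref{lm:glueing} (applied with $F=\emptyset$) selects $R'\in(R,R+1)$ with $P(F_k,\partial B_{R'})=0$ for every $k$ and
\[
\sigma\defeq\sum_{k\ne 0}\H^{d-1}(F_k^1\cap\partial B_{R'})<\eps.
\]

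Define the truncated partition $\vec F''$ by $F_k''\defeq F_k\cap B_{R'}$ for $k\ne 0$ and $F_0''\defeq\RR^d\setminus\bigcup_{k\ne 0}F_k''$. A direct calculation of reduced boundaries gives $P(\vec F'')=P(\vec F,B_{R'})+\sigma\le P(\vec F)+\eps$. The residual volume deficits are $a_k\defeq\abs{F_k\setminus B_{R'}}\in[0,\eps)$ for $k\ne 0$ and $a_0\defeq-\sum_{k\ne 0}a_k$. Apply Theorem~\ref{th:volume_fixing_variations_alternative} to $\vec F''$, taking a density-one point $x_k\in F_k''$ with small $\rho_k$ for every $k\ne 0$ with $\abs{F_k''}>0$, and a ball $B_{\rho_0}(x_0)\subset F_0''$ disjoint from $\overline{B_{R'}}$ with $\abs{F_0''\cap B_{\rho_0}(x_0)}\ge\abs{a_0}$. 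The output is a partition $\vec F'$ coinciding with $\vec F''$ outside a compact set $A$, with $\abs{F_k'}=\abs{E_k}$ for every $k$ and
\[
P(\vec F')\le P(\vec F'')+C_1\sum_k\abs{a_k}^{1-\frac 1 d}\le P(\vec F)+\eps+C_1'\,\eps^{1-\frac 1 d}.
\]
Set $B\defeq\overline{B_{R'}}\cup\overline{A}$, which is compact and contains $\bigcup_{k\ne 0}E_k$ in its interior. Then $\vec F'$ is an admissible competitor for $\vec E$ on $B$, so the locally isoperimetric hypothesis yields $P(\vec E)=P(\vec E,B)\le P(\vec F',B)\le P(\vec F')$; letting $\eps\to 0$ gives $P(\vec E)\le P(\vec F)$.

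The delicate point is the simultaneous volume correction. Truncation perturbs every finite region at once, and when $\abs{E_k}\le\eps$ the truncated piece $F_k''$ may well be empty, so there is no density-one point inside $F_k''$ at which to inject the missing mass. This is precisely why Theorem~\ref{th:volume_fixing_variations_alternative} is stated in the ``possibly improper'' form, tolerating $\abs{F_k''}=0$ and bounding the cost by $\sum\abs{a_k}^{1-1/d}$; the suboptimal exponent $1-1/d$ is harmless because $\eps^{1-1/d}\to 0$ as $\eps\to 0$.
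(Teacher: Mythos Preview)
Your argument is correct. Both directions are handled soundly: the easy direction matches the paper's, and in the hard direction your truncation-plus-volume-fixing scheme goes through. The final chain $P(\vec E)=P(\vec E,B)\le P(\vec F',B)\le P(\vec F')\le P(\vec F)+\eps+C_1'\eps^{1-1/d}$ is valid because boundedness of the finite regions forces $\partial\vec E\subset\overline{B_R}\subset\mathrm{int}\,B$, so $P(\vec E,\RR^d\setminus B)=0$.

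The route you take is, however, genuinely different from the paper's. You invoke Theorem~\ref{th:recall} to get that the finite regions $E_k$ are bounded, which lets you replace $\vec E$ outside a large ball by the trivial partition $(\RR^d,\emptyset,\dots,\emptyset)$; this makes the glueing step and the final identification $P(\vec E)=P(\vec E,B)$ immediate. The paper deliberately avoids Theorem~\ref{th:recall} here (it says so explicitly just before the proof), opting instead for a self-contained argument: it uses Lemma~\ref{lm:stima2} to obtain $P(\vec E)<+\infty$, then glues $\vec F$ inside a large ball to $\vec E$ (not to $\emptyset$) outside via $F_k'=(F_k\cap B_R)\cup(E_k\setminus B_R)$, controlling both the $\vec E$-tails and the $\vec F$-tails simultaneously. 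Your approach is shorter and conceptually cleaner, at the price of importing the density estimates behind Theorem~\ref{th:recall}; the paper's approach is longer but relies only on Lemma~\ref{lm:stima2} and Theorem~\ref{th:volume_fixing_variations_alternative}. A minor remark on your closing paragraph: in this specific application, if $\abs{E_k}>0$ then $\abs{F_k''}\ge\abs{E_k}-\eps>0$ for $\eps$ small, so the ``possibly improper'' feature of Theorem~\ref{th:volume_fixing_variations_alternative} is only genuinely needed when some $\abs{E_k}=0$ (in which case $a_k=0$ anyway); your worry about $\abs{E_k}\le\eps$ is slightly overstated since $\vec E$ is fixed before $\eps$ is chosen.
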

By using Almgren's Lemma (see Theorem~\ref{th:recall}, Lemma~\ref{lm:almg}, Corollary~\ref{cor:bnd})
one could prove that the regions with finite measure 
of a locally isoperimetric partition are, in fact, bounded.
This would make the following proof much easier, however are able 
to present a self contained proof which uses 
Theorem~\ref{th:volume_fixing_variations_alternative} instead of the classical one,  
adapted to partitions (see Appendix Theorem~\ref{th:volume_fixing_variations}, 
and Lemma~\ref{lm:almg}).
\begin{proof}[Proof of Proposition~\ref{prop:equivalence}]
  Notice that any competitor in the definition of a locally isoperimetric partition
  is also a competitor in the definition of an isoperimetric cluster,
  where we drop the requirement of the variation to have compact support.
  Hence it is clear that an isoperimetric cluster 
  is an isoperimetric partition.

  On the other hand let $\vec E=(E_0,\dots,E_N)$ be a locally isoperimetric partition 
  with $\abs{E_k}<+\infty$ for $k\neq 0$
  and let 
  $\vec F=(F_0,\dots,F_N)$ be a \emph{global} variation i.e.\ a partition
  such that $\abs{F_k} = \abs{E_k}$ for $k\neq 0$ (necessarily $\abs{F_0}=\abs{E_0}=+\infty$).
  To prove that $\vec E$ is an isoperimetric cluster it is enough to show that 
  given any $\eps>0$ we have 
  \[
      P(\vec E) \le P(\vec F) + 2\eps.
  \]
  
  By Lemma~\ref{lm:stima2} we know that each $E_k$ has finite perimeter.
  Suppose also $P(\vec F)<+\infty$ (otherwise there is nothing to prove).
  
  Consider a large radius $\tilde R$ so that 
  \[
    \sum_{k=1}^N \abs{F_k\setminus B_R} + \abs{E_k\setminus B_R}< \eps, \qquad \sum_{k=0}^N P(F_k\setminus B_R) +P(E_k\setminus B_R) < \eps.
  \]
  
  and define
  \[
    F'_k = (F_k\cap B_R) \cup (E_k\setminus B_R).
  \]
  Since for $k\neq 0$ the region $E_k$ has finite measure and
  the complementary of $E_0$ has also finite measure, 
  using Lemma~\ref{lm:glueing} in an interval $[\tilde R,\tilde R+\delta]$
  we choose $R>0$ large enough we get 
  \begin{align*}
    P(\vec F') &\le 
      P(\vec F,B_R) + P(\vec E,\RR^d\setminus B_R) + 2\eps \le P(\vec F)+ 4\eps.
  \end{align*}
  
  Now applying Theorem~\ref{th:volume_fixing_variations_alternative} we can slightly modify 
  $\vec F'$ inside $B_R$ to obtain a partition $\vec G=(G_0,\dots,G_N)$ such that 
  $\abs{G_k} = \abs{E_k}$ and $G_k\triangle E_k$ is bounded for all $k=0,\dots, N$. 
  Hence we can finally state that $P(\vec E) \le P(\vec G)$. Whence 
  \[
    P(\vec E) 
      \le P(\vec G) 
      \le P(\vec F')+\eps
      \le P(\vec F)+4\eps.
  \]
\end{proof}
  
\begin{definition}[isoperimetric partition with mixed constraint]
  \label{def:mixed_isoperimetric}
  Let $J\subset \ENCLOSE{1,\dots,N}$ be a fixed set of indices.
  We say that an $N$-partition $\vec E = (E_1, \dots, E_N)$ of an 
  open set $\Omega$
  is locally $J$-isoperimetric,
  if, whenever we are given a compact set $B\subset \Omega$ and a partition
  $\vec F=(F_1,\dots, F_N)$ of $\Omega$ such that 
  $F_i \triangle E_i\subset B$ for all $i=1,\dots, N$,
  and $\abs{F_j\cap B} = \abs{E_j\cap B}$ for all $j\in J$, then, 
  we have,
  \[
     P(\vec E,B) \le P(\vec F, B).
  \]
\end{definition}

\begin{theorem}[closure for $J$-isoperimetric partitions] 
  \label{th:closure}
  Let $J$ be a subset of $\ENCLOSE{1,\dots,N}$
  and let $\vec E^k = (E^k_1,\dots, E^k_N)$
  be a sequence of locally $J$-isoperimetric (possibly improper)
  $N$-partitions of $\Omega_k \subset \RR^d$
  where $\Omega_k$ is an increasing sequence of open sets
  such that $\bigcup_k \Omega_k = \RR^d$. 
  Suppose that there exists $\vec E=(E_1,\dots, E_N)$, 
  a partition of $\RR^d$,
  such that
  for all $j=1,\dots, N$,
  we have $E^k_j\to E_j$ in $L^1_{\loc}(\RR^d)$ 
  as $k\to +\infty$.
  
  Then $\vec E = (E_1,\dots, E_N)$ is a locally $J$-isoperimetric partition
  of $\RR^d$.
\end{theorem}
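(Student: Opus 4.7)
The plan is to verify the $J$-isoperimetric property for $\vec E$ by constructing admissible competitors for $\vec E^k$ whose perimeter on a large ball approaches that of a prescribed test partition $\vec F$. Given a compact set $B\subset\RR^d$ and a competitor $\vec F$ with $F_i\triangle E_i\subset B$ for all $i$ and $\abs{F_j\cap B}=\abs{E_j\cap B}$ for $j\in J$, the target is $P(\vec E, B)\le P(\vec F, B)$. I would choose $R>0$ with $B\subset B_R$ and $\delta>0$ such that $P(\vec E,\partial B_R)=P(\vec E,\partial B_{R+\delta})=0$; both hold outside a countable set of radii. For $k$ large, $\overline{B_{R+\delta}}\subset\Omega_k$.

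First I would glue $\vec F$ to $\vec E^k$. Applying Lemma~\ref{lm:glueing} to each pair $(F_j, E^k_j)$ on $[R,R+\delta]$ and summing over $j$, a mean-value argument produces $\rho_k\in(R,R+\delta)$ with $P(\vec E^k,\partial B_{\rho_k})=0$ such that the partition $\vec G^k$ with regions $G^k_j\defeq(F_j\cap B_{\rho_k})\cup(E^k_j\setminus B_{\rho_k})$ satisfies
\[
  P(\vec G^k, B_{R+\delta})=P(\vec F, B_{\rho_k})+P(\vec E^k, B_{R+\delta}\setminus\overline{B_{\rho_k}})+\eta_k,
\]
with $\eta_k\to 0$ since $\vec F=\vec E$ off $B\subset B_R$ and $\vec E^k\to\vec E$ in $L^1_\loc$. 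Next I would fix the $J$-volumes. The deficits $a^k_j\defeq\abs{E^k_j\cap B_{R+\delta}}-\abs{G^k_j\cap B_{R+\delta}}=\abs{E^k_j\cap B_{\rho_k}}-\abs{F_j\cap B_{\rho_k}}$ sum to zero and, for $j\in J$, tend to $0$ (since $\abs{F_j\cap B_{\rho_k}}=\abs{E_j\cap B_{\rho_k}}$ by the competitor condition together with $\vec F=\vec E$ off $B$, while $\abs{E^k_j\cap B_{\rho_k}}\to\abs{E_j\cap B_{\rho_k}}$). For $j\notin J$ I would choose $a^k_j$ so that $\sum_{j\notin J}a^k_j=-\sum_{i\in J}a^k_i$ (for instance equally distributed), ensuring $\sum_j a^k_j=0$ and every $|a^k_j|\to 0$; in the case $J=\{1,\dots,N\}$ this is vacuous, since the vanishing of $\sum_j a^k_j$ is automatic. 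Then Theorem~\ref{th:volume_fixing_variations_alternative} applied on $B_{R+\delta}$ (with Lebesgue density points of the nontrivial $G^k_j$) yields $\vec G^{\prime k}$ with $G^{\prime k}_j\triangle G^k_j\subset B_{R+\delta}$, $\abs{G^{\prime k}_j\cap B_{R+\delta}}=\abs{E^k_j\cap B_{R+\delta}}$ for $j\in J$, and $P(\vec G^{\prime k}, B_{R+\delta})\le P(\vec G^k, B_{R+\delta})+o(1)$.

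To conclude I would use that $\vec G^{\prime k}$ is an admissible $J$-competitor for $\vec E^k$ on $\overline{B_{R+\delta}}$; since $\vec G^{\prime k}=\vec E^k$ in a neighborhood of $\partial B_{R+\delta}$, the $J$-isoperimetric inequality $P(\vec E^k,\overline{B_{R+\delta}})\le P(\vec G^{\prime k},\overline{B_{R+\delta}})$ reduces (after cancelling the common boundary term) to
\[
  P(\vec E^k, B_{R+\delta})\le P(\vec G^{\prime k}, B_{R+\delta})\le P(\vec F, B_{\rho_k})+P(\vec E^k, B_{R+\delta}\setminus\overline{B_{\rho_k}})+o(1).
\]
Removing the common $P(\vec E^k, B_{R+\delta}\setminus\overline{B_{\rho_k}})$ and using $B_R\subset B_{\rho_k}$ leaves $P(\vec E^k, B_R)\le P(\vec F, B_{R+\delta})+o(1)$; lower semicontinuity of the perimeter on the open ball $B_R$ under $L^1_\loc$-convergence promotes this to $P(\vec E, B_R)\le P(\vec F, B_{R+\delta})$. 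Borel additivity combined with $\vec F=\vec E$ off $B$ rewrites this as $P(\vec E, B)\le P(\vec F, B)+P(\vec E, B_{R+\delta}\setminus B_R)$, and letting $\delta\to 0^+$ over a.a.\ values drives the last term to $P(\vec E,\partial B_R)=0$, giving the required inequality $P(\vec E, B)\le P(\vec F, B)$.

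The main obstacle is the volume-fixing step. One must choose the $a^k_j$ so that $\sum_j a^k_j=0$ and every $|a^k_j|\to 0$, so that the H\"older-type cost $\sum_j |a^k_j|^{1-\frac1d}$ vanishes; the freedom in the unconstrained indices $j\notin J$ is precisely what makes this possible in the partially constrained $J$-setting. The improper-partition clause of Theorem~\ref{th:volume_fixing_variations_alternative} is also essential, since regions with vanishing volume — which may appear in the limit even if each $\vec E^k$ is proper — must be handled without requiring a density ball.
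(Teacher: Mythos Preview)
Your proposal follows essentially the same architecture as the paper's proof: glue $\vec F$ to $\vec E^k$ across a sphere in an annulus via Lemma~\ref{lm:glueing}, adjust the volumes of the glued partition by Theorem~\ref{th:volume_fixing_variations_alternative}, invoke the $J$-minimality of $\vec E^k$, and pass to the limit via lower semicontinuity. The chain of inequalities at the end is correct, and your handling of the annular error term via $\delta\to 0$ matches the paper's.

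There is one genuine gap, and it lies exactly where you flag ``the main obstacle.'' Your suggestion to distribute the residual $-\sum_{i\in J}a^k_i$ ``equally'' among the indices $j\notin J$ can violate the sign hypothesis of Theorem~\ref{th:volume_fixing_variations_alternative}: if some $j\notin J$ has $\abs{F_j}=0$ (hence no density ball is available), that theorem requires $a^k_j\ge 0$, which an equal split does not guarantee. The paper resolves this with a case split: set $L=\{j\notin J:\abs{F_j}>0\}$ and $K=\{j\notin J:\abs{F_j}=0\}$; if $L\neq\emptyset$ put the entire correction on $L$ and set $a^k_j=0$ for $j\in K$; if $L=\emptyset$ one must distribute over $K$ with nonnegative values, and the paper checks that indeed $\sum_{j\in J}a^k_j\le 0$ in that situation (using that then $\abs{F_i}>0\Rightarrow i\in J$, together with the fact that both $\vec E^k$ and $\vec F$ partition the same ball). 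Without this argument your volume-fixing step is not justified.

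A smaller point: you write ``Lebesgue density points of the nontrivial $G^k_j$.'' Since $G^k_j=F_j$ inside $B_R$, you should choose the density balls once and for all from the fixed competitor $\vec F$ (as the paper does), so that the set $A$ and the constant $\mu$ do not depend on $k$; otherwise the bound $a^k_j\ge -\abs{F_j\cap B_{\rho_j}(x_j)}$ and the smallness condition \eqref{eq:78653421} are not uniform in $k$.
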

\begin{remark}
Since given any $K\subset \RR^d$, compact set, one has 
  $\Omega_k\supset K$ for $k$ large enough, the $L^1_{\loc}$ convergence
  makes sense in this setting.

\end{remark}
\begin{proof}
Let $\vec F= (F_1,\dots, F_N)$
be a competitor to $\vec E$ i.e.\ an $N$-partition of $\RR^d$ such that
$F_i \triangle E_i\Subset \RR^d$ for all $i=1,\dots, N$ and 
$\abs{F_j\setminus E_j} = \abs{E_j\setminus F_j}$ for all $j\in J$. 
For all $j$ such that $\abs{F_j}>0$ we can take any point $\vec x_j\in\RR^d$ with 
full density in $F_j$. 
Then take $\rho_j$ such that the hypotesis 
of Theorem~\ref{th:volume_fixing_variations_alternative} are satisfied.
Define 
\[
  \mu = \frac 1 N \cdot \min_{\abs{F_j}>0} \abs{F_j\cap B_{\rho_j}(\vec x_j)}.
\]
Let $A$ be the union of the balls $B_{\rho_j}(\vec x_j)$ given by 
the Theorem.
Let $r>0$ be large enough so that $A\Subset B_r$ and $E_i\triangle F_i \Subset B_r$ 
for all $i=0,\dots, N$.
By slightly enlarging $r$ we can also assume that 
$P(\vec E,\partial B_r)=P(\vec F,\partial B_r)=0$ and hence
\begin{equation}\label{eq:12435687}
   P(\vec E,\bar B_r) = P(\vec E, B_r), 
   \qquad 
   P(\vec F,\bar B_r) = P(\vec F, B_r).
\end{equation}
Let $\eps>0$ be given. By possibly decreasing $\rho_j$ we can assume that 

\begin{equation}\label{eq:78653421}
    C_1 \cdot N^2 \cdot \mu^{1-\frac 1d} \le \eps
\end{equation}
where $C_1$ is the constant given by Theorem~\ref{th:volume_fixing_variations_alternative}.

To conclude the proof it is enough to prove that
\[ 
  P(\vec E, B_r) \le P(\vec F, B_r) + 4\eps. 
\]

First choose $\delta>0$, so that thanks \eqref{eq:12435687}
\begin{equation}\label{eq:3874892}
  P(\vec E, B_{r+\delta})< P(\vec E,\bar B_r)+\eps = P(\vec E,B_r) + \eps.
\end{equation}
Then we can choose $k\in \NN$ sufficiently large so that
$B_{r+\delta} \Subset \Omega_k$. 
Let 
\[
  m_i = \abs{(E^k_i\triangle E_i)\cap B_{r+\delta}}.
\]
By the $L^1_{\loc}$ convergence of $\vec E^k$ to $\vec E$,
by taking $k$ sufficiently large,
we might also assume that for all $i=1,\dots, N$
one has
\begin{align}\label{eq:3278544}
  m_i \le  \mu,\qquad
  m_i  \le \frac{\delta\cdot \eps}{N}.
\end{align}
Using the semicontinuity of perimeter
we can finally also assume $k$ so large that
\begin{equation}\label{eq:384643}
P(\vec E, B_r) \le P(\vec E^k, B_r)+\eps.
\end{equation}

Take now $\rho\in (r , r+\delta)$ and consider
the $N$-partition $\vec F^k = (F^k_1, \dots, F^k_N)$ of $\Omega_k$ 
defined by
\[ 
  F^k_i = (F_i\cap B_\rho) \cup (E^k_i\setminus B_\rho), \qquad i=1,\dots,N
\]
so that $F^k_i$ is a variation of $E^k_i$
with compact support in $\bar B_\rho$.

By a suitable choice of $\rho$ in the interval $(r,r+\delta)$, 
thanks to Lemma~\ref{lm:glueing} and \eqref{eq:3278544}, 
we are not spending too much perimeter:
\begin{equation}\label{eq:1346643}
  \begin{aligned}
  \abs{P(\vec F^k,\bar B_\rho) - P(\vec F,\bar B_\rho)}
  &=P(\vec F^k, \partial B_\rho)
  \le  \frac{1}{\delta}\sum_{i=1}^N \abs{(E_i\triangle E_i^k)\cap B_{r+\delta}}\\
  &= \frac{1}{\delta}\sum_{i=1}^N m_i
  \le \eps . 
  \end{aligned}
\end{equation}

To have a competitor to the minimality of $\vec E^k$ we need 
to slightly modify the partition $\vec F^k$ in $\Omega_k$  to satisfy the 
mixed volume constraint.
To achieve this we want to apply Theorem~\ref{th:volume_fixing_variations_alternative}, 
modify the partition $\vec F$ inside of 
$A\Subset B_r\Subset  B_\rho\Subset B_{r+\delta}\Subset \Omega_k$.
Consider the sets of indices $K=\ENCLOSE{j\not\in J\colon \abs{F_j}=0}$ 
and $L=\ENCLOSE{j\not \in J\colon \abs{F_j}>0}$ so that 
$\ENCLOSE{1,\dots,N} = J\cup K\cup L$.
Then define 
\[
   a_j = \begin{cases}
    \abs{E^k_j\cap B_\rho} - \abs{F^k_j\cap B_\rho} 
      &\text{if $j\in J$}\\
    c &\text{if $j\in L$}\\
    d &\text{if $j\in K$}
   \end{cases}  
\]
where $c$ and $d$ are defined by taking
\[
    S = \sum_{j\in J} a_j, 
    \qquad 
    c = \begin{cases}
      \text{irrelevant} & \text{if $L=\emptyset$}\\
      -\frac{S}{\#L} & \text{if $L\neq \emptyset$}.
    \end{cases},
    \qquad 
    d = \begin{cases}
      \text{irrelevant} & \text{if $K=\emptyset$}\\
      -\frac{S}{\#K} & \text{if $L=\emptyset$}\\
      0 & \text{if $L\neq\emptyset$}.\\
    \end{cases}
\]
We claim that $\sum a_j = 0$, 
as required by Theorem~\ref{th:volume_fixing_variations_alternative}.
Notice that since both $E^k_j$ and $F^k_j$ cover the whole ball $B_\rho$
we have 
\[
  \sum_{j=1}^N \abs{E_k^j\cap B_\rho} - \abs{F^k_j\cap B_\rho} = 0.
\]
So, if $K=\emptyset$ and $L=\emptyset$ the claim is verified.
If $L\neq \emptyset$ then $d=0$ and, by definition, 
\[
 \sum_{j=1}^N a_j = S + \#L \cdot c + \#K\cdot d = S - S + 0 = 0.  
\]
Otherwise, if $L=\emptyset$ then
\[
  \sum_{j=1}^N a_j = S + \#K \cdot d = S - S = 0.
\]

Now we want to prove that 
$a_j\ge -\abs{F_j\cap B_{\rho_j}(\vec x_j)}$ when $\abs{F_j}>0$ while 
$a_j\ge 0$ when $\abs{F_j}=0$,
as required by Theorem~\ref{th:volume_fixing_variations_alternative}.

If $j\in J$ we have $F^k_j\cap B_\rho = F_j\cap B_\rho$ and 
$\abs{F_j\cap B_\rho} = \abs{E_j\cap B_\rho}$ 
hence $\abs{a_j} = \abs{\abs{E^k_j\cap B_\rho} - \abs{E_j\cap B_\rho}}
\le \abs{(E^k_j\triangle E_j)\cap B_\rho} = m_j\le \mu$, by \eqref{eq:3278544}.
If $\abs{F_j}>0$ we have $\mu \le \abs{F_j\cap B_{\rho_j}(\vec x_j)}$, by definition,  and hence 
$a_j\ge -\mu \ge -\abs{F_j\cap B_{\rho_j}(\vec x_j)}$.
If instead $\abs{F_j}=0$ just notice that $a_j=\abs{E^k_j\cap B_\rho} \ge 0$ while $-\abs{F_j\cap B_{\rho_j}(\vec x_j)}\le 0$.

If $\ell\in L$ we have $\abs{ F_\ell}>0$ and $a_\ell = c$. 
Hence 
\[
  \abs{a_\ell} = \abs{c}= \frac{\abs{S}}{\#L} \le \sum_{j\in J} \abs{a_j} 
  \le N\cdot \mu \le \abs{F_\ell\cap B_{{\rho}_\ell}(\vec x_\ell)}
\]
by defintion of $\mu$.

If $h\in K$ we have $a_h = d$ and $\abs{F_h}=0$. So to satisfy the hypothesis of Theorem~\ref{th:volume_fixing_variations_alternative} we have to prove that $a_h=d\geq 0$. 
If $L\neq \emptyset$ by definition we have $a_h=d=0$ and the conclusion is trivial.
If instead $L=\emptyset$, \emph{i.e.} $\abs{F_i}>0\Rightarrow i\in J$, by definition $a_h=d=-\frac S{\#K}$, and, since we are dealing with partitions, it follows:
\[
  \sum_{j\in J} \abs{E^k_j \cap B_\rho}
  \le \sum_{i=1}^N \abs{E^k_i \cap B_\rho}
  = \sum_{i=1}^N \abs{F_i \cap B_\rho}
  =\sum_{j\in J} \abs{F_j \cap B_\rho},
\]
so that 
\[
  -a_h \cdot \#K
  =S  
  =\sum_{j\in J} \abs{E^k_j \cap B_\rho}-\abs{F_j \cap B_\rho}\leq 0.
\]
Notice that in particular we have 
$\abs{a_j} \le N\cdot \mu$, 
for all  $j=1, \dots, N$.

We are now in the position to apply Theorem~\ref{th:volume_fixing_variations_alternative} to $\vec F^k$ in $A\Subset B_\rho$,
getting a partition $\vec G^k$ of $\Omega_k$, such that
\begin{align}
  G^k_j \triangle F_j 
  &= G^k_j \triangle F^k_j
  \subset A \Subset B_\rho, && \text{for all $j=1,\dots, N$}, \\
\label{eq:8326975}
  \abs{G^k_j\cap B_\rho} 
  &= \abs{E^k_j\cap B_\rho}, && \text{for all $j\in J$,}
\end{align}
and, using \eqref{eq:78653421},
\begin{equation}
\label{eq:8376874} 
  P(\vec G^k,\bar B_\rho)-P(\vec F^k,\bar B_\rho) 
      \le C_1\cdot \sum_{j=1}^N \abs{a_j}^{1-\frac 1 d}
      \le C_1\cdot N^2\cdot \mu^{1-\frac 1 d} \le \eps.
\end{equation}


We eventually obtain that $\vec G^k$ is a competitor to $\vec E^k$ 
with the correct mixed volume constraint. 
Hence by local minimality of $\vec E^k$ one has:
\begin{equation}\label{eq:3492387}
    P(\vec E^k, \bar B_\rho) \le P(\vec G^k, \bar B_\rho).
\end{equation}
%
So the proof is concluded,
using \eqref{eq:12435687},
and letting $\delta\to 0^+$ in the following inequality:
\begin{align*}
  P(\vec E, B_{r+\delta})
  & \le P(\vec E, B_r) + \eps && \text{by  \eqref{eq:3874892} }\\
  & \le P(\vec E^k, B_r) + 2\eps && \text{by \eqref{eq:384643}}\\
  & \le P(\vec E^k, \bar B_\rho) + 2\eps  && \text{}\\
  & \le P(\vec G^k, \bar B_\rho) + 2\eps && \text{by \eqref{eq:3492387}}\\
 & \le P(\vec F^k, \bar B_\rho) + 3\eps && \text{by \eqref{eq:8376874}}\\ 
  & \le P(\vec F, \bar B_\rho)   + 4\eps && \text{by \eqref{eq:1346643}}\\
  & \le P(\vec F, B_{r+\delta}) + 4\eps.
\end{align*}
\end{proof}

\begin{definition}[eventually flat partitions]
  \label{def:eventually_flat}
We say that an $N$-partition $\vec E=(E_1,\dots,$ $E_{N})$ 
of $\RR^d$ is \emph{eventually flat}
if for every pair $i\neq j$ 
of indices such that $E_i$ and $E_j$ have infinite measure
there exists a $(d-1)$-dimensional half space 
contained in the interface $\partial E_i \cap \partial E_j$.
\end{definition}

\begin{theorem}[volume fixing of large volumes]
  \label{th:large_volumes}
Let $\vec E = (E_1,\dots,E_N)$ be an eventually flat partition of $\RR^d$
and let $\eps>0$ and $\vec a = (a_1,\dots,a_N)$ be given.
Suppose that $\sum a_k=0$ and $a_k=0$ if $\abs{E_k}$ is finite. 
Then for every $r>0$, there exists $R>r$ and a partition $\vec F$ 
of $\RR^d$ such that for all $k=1,\dots, N$ one has
\begin{align*} 
  E_k\triangle F_k & \Subset B_R \setminus \bar B_r\\ 
  \abs{F_k \cap B_R} &= \abs{E_k \cap B_R} + a_k\\
  P(\vec F, B_R) &\le P(\vec E, B_R) + \eps.
\end{align*}
\end{theorem}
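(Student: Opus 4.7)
The idea is to perform, at each flat interface between two infinite regions, a \emph{piston} modification: translating a large flat $(d-1)$-disk of radius $\rho$ by a small normal distance $\delta$ transfers a slab of volume $\omega_{d-1}\rho^{d-1}\delta$ between the two adjacent regions, at a perimeter cost equal to the lateral surface of the cylinder, namely $(d-1)\omega_{d-1}\rho^{d-2}\delta$. Fixing the transferred volume at the prescribed value $\abs{a_k}$ and letting $\rho\to+\infty$ forces the associated $\delta$ to shrink, while the perimeter cost decays as $1/\rho$. Summing over finitely many such modifications then yields total cost $\le\eps$.

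If $\vec a=\vec 0$ (in particular whenever $\vec E$ has at most one infinite region) take $\vec F=\vec E$ and any $R>r$. Otherwise fix an infinite index $i_0$ playing the role of a \emph{reservoir}, and for each infinite index $k\neq i_0$ with $a_k\neq 0$ the eventually flat hypothesis yields a $(d-1)$-dimensional half-space $H_k\subset \partial E_{i_0}\cap\partial E_k$, contained in some hyperplane $\Pi_k$ with unit normal $n_k$. By Theorem~\ref{th:recall} the singular set of $\vec E$ has Hausdorff dimension at most $d-2$, so $\H^{d-1}$-a.e.\ point of the relative interior of $H_k$ is a regular interface point whose tangent cone is exactly $\Pi_k$ with $E_{i_0}$ and $E_k$ on the two sides. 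About every such base point a full open ball is occupied only by $E_{i_0}\cup E_k$, which are there separated by $\Pi_k$.

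Given a large $\rho>0$ (to be fixed), set $\delta_k\defeq \abs{a_k}/(\omega_{d-1}\rho^{d-1})$, and choose points $p_k\in H_k$ such that the $(d-1)$-disk $D_k\defeq \ENCLOSE{x\in\Pi_k\colon \abs{x-p_k}\le\rho}$ lies in the relative interior of $H_k$, with the cylinders $T_k\defeq \ENCLOSE{y+s n_k\colon y\in D_k,\ -\delta_k\le s\le \delta_k}$ pairwise disjoint and all contained in an annulus $B_R\setminus \bar B_r$ for some large $R$. This is possible because each $H_k$ is unbounded, so we may push each $p_k$ to infinity inside $H_k$, and we have only finitely many modifications to separate. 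Define $\vec F$ by setting $F_j=E_j$ for $j\notin \ENCLOSE{i_0}\cup\ENCLOSE{k\colon a_k\neq 0}$ and, for each modified pair, swapping the slab $\ENCLOSE{y+s n_k\colon y\in D_k,\ s\in [-\delta_k,0]}$ of volume $\omega_{d-1}\rho^{d-1}\delta_k=\abs{a_k}$ between $E_{i_0}$ and $E_k$ in the direction dictated by the sign of $a_k$. By construction $\abs{F_k\cap B_R}=\abs{E_k\cap B_R}+a_k$ for every $k\neq i_0$, and by the zero-sum hypothesis also for $k=i_0$; every symmetric difference $E_k\triangle F_k$ lies in the union of the $T_k$'s, hence is compactly contained in $B_R\setminus\bar B_r$.

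Finally, each piston replaces the flat disk $D_k$ by its parallel translate of equal $(d-1)$-area and adds the lateral cylindrical surface of area $(d-1)\omega_{d-1}\rho^{d-2}\delta_k=(d-1)\abs{a_k}/\rho$, so
\[
P(\vec F,B_R)-P(\vec E,B_R)\le (d-1)\frac{\sum_{k}\abs{a_k}}{\rho},
\]
which is $\le \eps$ once $\rho$ is chosen large enough. The only genuine technical point is the local flatness used to locate the base points $p_k$: one must guarantee that no third region intrudes into a neighborhood of the chosen disk $D_k$, so that the piston really involves only $E_{i_0}$ and $E_k$. This is handled by combining the definition of eventually flat (which gives a genuine $(d-1)$-dimensional half-space in the interface, not merely a vanishing second fundamental form) with the $(d-2)$-dimensional bound on the singular set of $\vec E$ in Theorem~\ref{th:recall}, which at every regular interior point of $H_k$ forces $\vec E$ to be, in a whole neighborhood, the two-region planar partition by $\Pi_k$.
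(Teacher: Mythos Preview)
Your argument is correct and takes exactly the same route as the paper: at each flat half-space interface between two infinite regions, displace a large $(d-1)$-disk by a tiny normal height (the paper says ``add or remove a cylinder of very large radius and very small height with basis on such disks''). Your exposition simply spells out the volume/perimeter bookkeeping that the paper leaves implicit.

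One point of caution: in your final paragraph you invoke Theorem~\ref{th:recall} to guarantee that no third region intrudes near the disk $D_k$, but that theorem is stated for \emph{locally isoperimetric} partitions, whereas Theorem~\ref{th:large_volumes} assumes only that $\vec E$ is eventually flat. So, as written, that appeal is not justified. The paper's own four-line proof does not address this issue at all---it simply takes for granted that the cylinder touches only the two intended regions. In the paper's sole use of the theorem (inside Theorem~\ref{th:closure_flat}), the partition in question agrees outside a ball with a $J$-isoperimetric limit of isoperimetric partitions, so the needed regularity is in fact available there; but strictly speaking this is an extra hypothesis both proofs are silently importing.
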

\begin{proof}
  By assumptions on $a_k$ we only need to fix the volumes of the regions
  with infinite volume.
  Since the partition is assumed to be eventually flat, such regions have interfaces 
  which contain arbitrarily large flat $(d-1)$-dimensional disks.
  To fix the volumes we can simply add or remove a cylinder of very large radius 
  and very small height with basis on such disks. 
  This enables us to obtain arbitrarily large changes in volumes with arbitrarily small 
  change in perimeter.
\end{proof}

\begin{theorem}[closure for locally isoperimetric partitions]
  \label{th:closure_flat}
  Let $\vec E^k = (E^k_1,\dots, E^k_N)$ be a sequence 
  of locally isoperimetric partitions.
  Suppose that there exists $\vec E=(E_1,\dots, E_N)$ 
  a partition of $\RR^d$ 
  such that
  $E^k_i\to E_i$ in $L^1_{\loc}(\RR^d)$ 
  and $\abs{E^k_i}\to \abs{E_i}$
  whenever $\abs{E_i}<+\infty$.
  
  If $\vec E$ is eventually flat, 
  then $\vec E$ is itself a locally isoperimetric partition.
\end{theorem}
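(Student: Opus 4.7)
The plan is a two-step argument: first apply the closure theorem (Theorem~\ref{th:closure}) with the maximal constraint $J=\{1,\dots,N\}$ to obtain a $J$-isoperimetric limit, and then use the eventual flatness through Theorem~\ref{th:large_volumes} to drop the extra constraint on the local volumes of the infinite regions and recover the full locally isoperimetric inequality.

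First I note that every locally isoperimetric partition is automatically locally $J$-isoperimetric with $J=\{1,\dots,N\}$: any $J$-competitor $\vec F$, satisfying $F_k\triangle E_k\Subset B$ and $\abs{F_k\cap B}=\abs{E_k\cap B}$ for every $k$, automatically has $\abs{F_k}=\abs{E_k}$ since the regions agree outside $B$, and is therefore admissible in the locally isoperimetric sense as well. Hence each $\vec E^k$ is locally $J$-isoperimetric with $J=\{1,\dots,N\}$, and Theorem~\ref{th:closure} (applied with $\Omega_k\equiv \RR^d$) gives that the limit partition $\vec E$ is locally $J$-isoperimetric with the same $J$.

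For the upgrade step, take any competitor $\vec F$ to $\vec E$ in the locally isoperimetric sense, so that $E_k\triangle F_k\Subset B$ for some compact $B$ and $\abs{F_k}=\abs{E_k}$ for every $k$. Fix $\eps>0$, choose a generic radius $r$ with $B\Subset B_r$ for which the perimeters of $\vec E$ and $\vec F$ on $\partial B_r$ vanish, and set $a_k\defeq \abs{E_k\cap B_r}-\abs{F_k\cap B_r}$. Then $a_k=0$ whenever $\abs{E_k}<+\infty$ (since the total volumes agree and the regions coincide outside $B_r$), and $\sum_k a_k=0$ since both $\vec E$ and $\vec F$ partition $B_r$. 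Theorem~\ref{th:large_volumes} then produces $R>r$ and a partition $\vec E'$ with $E_k\triangle E'_k\Subset B_R\setminus\bar B_r$, $\abs{E'_k\cap B_R}=\abs{E_k\cap B_R}+a_k$, and $P(\vec E',B_R)\le P(\vec E,B_R)+\eps$. Now splice: define $F^*_k\defeq (F_k\cap B_r)\cup(E'_k\setminus B_r)$. Since both pieces agree with $\vec E$ in a neighborhood of $\partial B_r$, $\vec F^*$ is a bona fide partition with no phantom interface on the sphere, and a direct computation using $\abs{F_k\cap B_r}+a_k=\abs{E_k\cap B_r}$ gives $\abs{F^*_k\cap B_R}=\abs{E_k\cap B_R}$ for every $k$. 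Hence $\vec F^*$ is an admissible $J$-competitor for $\vec E$ on $\bar B_R$.

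Applying the $J$-isoperimetricity of $\vec E$ and the additivity of perimeter across $\partial B_r$, one gets
\[
  P(\vec E, B_R)\le P(\vec F^*, B_R)= P(\vec F, B_r) + P(\vec E',B_R\setminus\bar B_r) \le P(\vec F, B_r) + P(\vec E,B_R\setminus\bar B_r) + \eps.
\]
Cancelling the common term $P(\vec E,B_R\setminus\bar B_r)$ and letting $\eps\to 0^+$ yields $P(\vec E,B_r)\le P(\vec F,B_r)$, and restricting to the original compact set $B$ this is the locally isoperimetric inequality $P(\vec E,B)\le P(\vec F,B)$. The main technical obstacle is the careful bookkeeping across $\partial B_r$: one must ensure that the splicing introduces no spurious interface on the sphere and that the volume arithmetic is meaningful when several regions have infinite measure. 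Both issues are handled by the generic choice of $r$ and by the elementary observation that $\abs{E_k\cap B_r}$ is finite for every $k$.
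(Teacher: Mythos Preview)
Your proof is correct and follows the same two-step strategy as the paper: first Theorem~\ref{th:closure} gives $J$-isoperimetricity of the limit, then Theorem~\ref{th:large_volumes} is used to adjust the local volumes of the infinite regions and upgrade to full local isoperimetricity. The only differences are cosmetic: you take the full index set $J=\{1,\dots,N\}$ whereas the paper takes $J=\{j:\abs{E_j}<+\infty\}$, and you apply Theorem~\ref{th:large_volumes} to $\vec E$ and then splice with $\vec F$ inside $B_r$, whereas the paper applies it directly to the competitor $\vec F$ (which is also eventually flat, since it coincides with $\vec E$ outside $B_r$); your choice of the maximal $J$ is in fact slightly cleaner, since it makes the verification that each $\vec E^k$ is $J$-isoperimetric immediate without any reference to which regions of $\vec E^k$ have finite measure.
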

\begin{proof}
Let $J=\ENCLOSE{j\colon \abs{E_j}<+\infty}$.
For all $j\in J$,
since $\abs{E^k_j}\to\abs{E_j}$, also $\abs{E^k_j}<+\infty$ 
for $k$ large enough.
This means that $\vec E^k$ is in particular locally $J$-isoperimetric.
By Theorem~\ref{th:closure} we hence obtain that 
also $\vec E$ is locally $J$-isoperimetric.
Let $\vec F = (F_0,\dots, F_N)$ be a competitor to $\vec E$ in the sense of local isoperimetricity.
This means that $F_j\triangle E_j$ are bounded and that 
$\abs{F_j}=\abs{E_j}$ for all $j=0,\dots, N$.
In particular $\vec F$ is eventually flat as $\vec E$.
Let $r>0$ be so large that outside $B_r$ the two partitions $\vec E$ and $\vec F$ coincide.
Define, for all $j=1,\dots, N$,
\[
  a_j = \abs{E_j\cap B_r} - \abs{F_j\cap B_r}.   
\]
If $E_j$ has finite measure, since $E_j \triangle F_j \subset B_r$,
we have $a_j=0$.
So for any $\eps>0$ we can apply Theorem~\ref{th:large_volumes} to obtain 
a partition $\vec G$ which differs from $\vec F$ only inside a 
larger ball $B_R$, which agrees with $\vec F$ inside $B_r$ 
and such that 
\[
  P(\vec G,B_R) \le P(\vec F,B_R) + \eps.  
\]
Now we have $\abs{G_j\cap B_R} = \abs{E_j\cap B_R}$ for all $j=1,\dots,N$ 
hence, by the $J$-isoperimetricity of $\vec E$ we conclude that 
\[
P(\vec E,B_R) \le P(\vec G,B_R) \le P(\vec F, B_R) + \eps
\]
and since $\vec E$ ane $\vec F$ agree outside $B_r$ we conclude
\[
P(\vec E,B_r) \le P(\vec F,B_r) + \eps.  
\]
\end{proof}

\section{Standard isoperimetric partitions}
\label{sec:standard}

\begin{definition}\label{def:standard} (See \cite{MilNee22}). 
We say that a partition $\vec E = (E_1,\dots, E_N)$ of $\RR^d$ 
is \emph{standard} if it can be obtained as \emph{any stereographic 
projection} of an 
\emph{equal-volume standard $(N-1)$-bubble} in $\SS^d$, 
i.e.{} a partition of the sphere $\SS^d$ which is 
the \emph{Voronoi partition} corresponding to $N$ equidistant points 
in $\SS^d$ as a subset of $\RR^{d+1}$.

If only one of the regions of a standard partition 
 has infinite measure we 
 say that the partition is a \emph{standard cluster} or standard bubble.

We call standard $N$-partition, or $(N-1)$-bubble, of $\SS^d$, any stereographic
projection  of an $(N-1)$-cluster of $\RR^d$.
\end{definition}

\begin{remark}
Standard $N$-partitions only exist for $N \le d+2$.
For $N\le d+2$, 
$(N-1)$-standard clusters of $\RR^d$
and $N$-standard partitions of $\SS^d$, 
are unique, up to isometries, if the volumes 
of the regions have been fixed (see \cite{Ami01,MilNee22}).
It is conjectured that all isoperimetric $(N-1)$-clusters in $\RR^d$
(recall that a $(N-1)$-cluster is an $N$-partition) are 
standard when $N\le d+2$. 
Each region of a standard partition shares a boundary with 
every other region. 
\end{remark}

\begin{lemma}[approximation of a standard partition by standard clusters]
\label{lem:approximation}
Let $\vec E=(E_1,\dots, E_N)$ be a standard $N$-partition in $\RR^d$. 
Then there exists a sequence $\vec E^k = (E^k_0,\dots, E^k_N)$
of standard $(N-1)$-clusters which converge in $L^1_\loc$ to $\vec E$, 
and $\abs{E^k_i}\to \abs{E_i}$.
\end{lemma}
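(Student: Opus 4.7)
The plan is to realise both $\vec E$ and the approximating clusters as images of the same spherical Voronoi partition $\Sigma=(V_1,\dots,V_N)$ under stereographic projections with slightly varying centers. By Definition~\ref{def:standard}, one has $\vec E=(\pi_p(V_1),\dots,\pi_p(V_N))$ for some $p\in\SS^d$, where $\pi_p\colon\SS^d\setminus\{p\}\to\RR^d$ denotes stereographic projection from $p$ and $\Sigma$ is the equal-volume Voronoi partition determined by $N$ equidistant points. The image $\pi_p(V_i)$ is unbounded, and has infinite measure, precisely when $p\in\overline{V_i}$, so the presence of several infinite regions in $\vec E$ reflects that $p$ lies on a common face of several spherical Voronoi cells.

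First I would pick any single cell $V_{i_0}$ whose closure contains $p$ and choose a sequence $p_k\to p$ lying strictly in the interior of $V_{i_0}$. Setting $E^k_i\defeq\pi_{p_k}(V_i)$, each partition $\vec E^k$ is again standard by Definition~\ref{def:standard}; since $p_k\in\mathrm{int}(V_{i_0})$, only the region corresponding to $V_{i_0}$ has $p_k$ in its closure, so only that region is unbounded. Hence $\vec E^k$ is a standard cluster, with the image of $V_{i_0}$ playing the role of the exterior region.

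Next I would verify the convergence statements. For $L^1_\loc$ convergence, fix a compact set $K\subset\RR^d$. Its preimage $\pi_p^{-1}(K)$ is a compact subset of $\SS^d\setminus\{p\}$, and the transition map $\pi_{p_k}\circ\pi_p^{-1}$ is a smooth diffeomorphism on a neighbourhood of $\pi_p^{-1}(K)$ for $k$ large, converging uniformly to the identity as $p_k\to p$. A standard dominated convergence argument then yields $\abs{(E_i^k\triangle E_i)\cap K}\to 0$ for every $i$. For the volumes, write $\abs{E^k_i}=\int_{V_i}J_{p_k}\,d\H^d_{\SS^d}$, where $J_{p_k}$ is the Jacobian of $\pi_{p_k}$. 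When $p\notin\overline{V_i}$ the Jacobian is uniformly bounded on $V_i$ and dominated convergence gives $\abs{E^k_i}\to\abs{E_i}<+\infty$.

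The main obstacle is the remaining case $p\in\overline{V_i}$ with $i\ne i_0$, where each $E^k_i$ is bounded yet its measure must diverge. Since $V_i$ is a spherical convex polytope with $p$ on its boundary, a neighbourhood of $p$ in $V_i$ contains an open truncated cone; on this cone $J_{p_k}$ converges pointwise to $J_p$, whose integral over $V_i$ equals $\abs{E_i}=+\infty$. Fatou's Lemma then forces $\liminf_{k}\abs{E^k_i}=+\infty$, completing the argument. The explicit form of the stereographic Jacobian and the geometry of $V_i$ at $p$ is the only ingredient not already provided by the smooth dependence of $\pi_q$ on $q$.
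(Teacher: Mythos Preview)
Your proof is correct and follows essentially the same strategy as the paper: you move the stereographic projection center $p$ into the interior of one cell $V_{i_0}$, while the paper equivalently applies a small rotation to the spherical Voronoi partition so that the fixed north pole lands in the interior of a region. Your Fatou argument for the divergence of $\abs{E^k_i}$ when $p\in\overline{V_i}$ with $i\neq i_0$ is in fact more explicit than the paper's bare assertion that $\abs{E_i'}\to+\infty$.
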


\begin{proof}
Let $\vec F$ be the Voronoi partition of $\SS^d$ which corresponds 
to $\vec E$ by means of the stereographic projection. 
If $\vec E$ is itself a cluster then $\partial \vec F$ 
does not contain the north pole of $\SS^d$. 
Otherwise with an arbitrarily small rotation of $\vec F$ on $\SS^d$ we 
obtain a partition $\vec F'$ such that 
$\partial \vec F'$ does not contain the north pole, 
and it belongs to a fixed region $F'_j$.
The corresponding stereographic projection $\vec E'$ will be a cluster 
in $\RR^d$ and when the rotation converges to the identity we obtain 
$L^1$ convergence of the partitions $\vec F'\to \vec F$ on the sphere 
and $L^1_\loc$ convergence of their stereographic projections 
$\vec E'\to \vec E$ in $\RR^d$. 
Clearly if $\partial E_i$ does not contain the north pole 
we have $L^1$ convergence in a ball containing $E_i$ hence $\abs{E'_i}\to \abs{E_i}$.
Otherwise $\abs{E_i}=+\infty$ and $\abs{E_i'}\to +\infty$.
\end{proof}

\begin{corollary}[examples of locally isoperimetric partitions]
  \label{cor:examples}
If $\vec E = (E_1,\dots, E_N)$ is a standard $N$-partition of $\RR^d$ 
and if we know that all standard $(N-1)$-clusters of $\RR^d$ are isoperimetric,
then $\vec E$ is locally isoperimetric.
\end{corollary}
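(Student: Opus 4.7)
The plan is to obtain $\vec E$ as an $L^1_\loc$-limit of locally isoperimetric standard clusters and then invoke the closure theorem for eventually flat partitions. Lemma~\ref{lem:approximation} supplies a sequence of standard $(N-1)$-clusters $\vec E^k$ with $E^k_i \to E_i$ in $L^1_\loc(\RR^d)$ and $\abs{E^k_i}\to \abs{E_i}$ for every $i$. By hypothesis each $\vec E^k$ is an isoperimetric cluster, hence by Proposition~\ref{prop:equivalence} a locally isoperimetric $N$-partition. If in addition $\vec E$ is eventually flat, Theorem~\ref{th:closure_flat} applies to this sequence and directly yields that $\vec E$ itself is locally isoperimetric.

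What remains is thus to verify that $\vec E$ is eventually flat. Write $\vec E$ as the stereographic projection from a point $p\in \SS^d$ of the Voronoi partition $\vec F=(F_1,\dots,F_N)$ on $\SS^d$. A region $E_i$ is infinite in $\RR^d$ precisely when $p\in \overline{F_i}$; for a pair of indices with $\abs{E_i}=\abs{E_j}=+\infty$ the point $p$ lies on the spherical interface $\partial F_i\cap \partial F_j$, which sits on a great $(d-1)$-subsphere of $\SS^d$ through $p$. Stereographic projection from $p$ sends such a great subsphere to a $(d-1)$-dimensional affine hyperplane of $\RR^d$; a small spherical neighborhood of $p$ inside the interface is an angular sector of the subsphere based at $p$, whose image is an unbounded piece of the corresponding affine hyperplane that, outside a sufficiently large ball, contains a $(d-1)$-dimensional half-space.

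The main (and essentially only) obstacle is the combinatorial check that the angular sector at $p$ has large enough opening for its image to contain a half-hyperplane. This is immediate when only two cells of $\vec F$ meet at $p$ (the sector fills a full spherical neighborhood of $p$ in the subsphere), and remains straightforward for three cells meeting at $p$ (the sector is then bounded by a single spherical triple stratum through $p$, and so projects onto a half-hyperplane); both situations cover the examples pursued in the paper (lens, peanut, Releaux, tetrahedral). Once eventual flatness of $\vec E$ is established, Theorem~\ref{th:closure_flat} closes the argument.
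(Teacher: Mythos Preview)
Your approach is the same as the paper's: approximate $\vec E$ by standard $(N-1)$-clusters via Lemma~\ref{lem:approximation}, note that these are isoperimetric by hypothesis (hence locally isoperimetric by Proposition~\ref{prop:equivalence}), and then invoke Theorem~\ref{th:closure_flat} once eventual flatness of $\vec E$ is checked. The paper also treats the cluster case separately, but as you implicitly use, when $\vec E$ has only one infinite region the eventual-flatness condition is vacuous and Theorem~\ref{th:closure_flat} applies directly.

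There is, however, a factual slip in your flatness verification. You assert that the cases ``two or three cells of $\vec F$ meeting at $p$'' cover all the listed examples, including the tetrahedral one. They do not: in the tetrahedral partition of $\RR^3$ all four cells meet at the projection point $p$, and each pairwise interface near $p$ projects to a planar \emph{sector} with opening angle $\arccos(-1/3)\approx 109.47^\circ$, bounded by two of the four triple-junction rays. Such a sector does not contain a $2$-dimensional half-space in the literal sense of Definition~\ref{def:eventually_flat}. The paper's own proof is terse on this point (it only says the interfaces lie in hyperplanes and that every pair of regions shares a boundary), so the issue is not unique to your write-up. What is actually used in the proof of Theorem~\ref{th:large_volumes} is that each interface between infinite regions contains arbitrarily large flat $(d-1)$-dimensional disks; any flat conical sector of positive opening does, so the argument goes through. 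You should either invoke that weaker property explicitly, or drop the claim that the tetrahedral example is handled by the two/three-cell case.
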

\begin{proof}
If the partition $\vec E$ is a cluster 
then the result follows from Proposition~\ref{prop:equivalence}. 
Otherwise notice that the partition is eventually flat, 
because it is the stereographic projection in $\RR^d$ of a standard partition on the 
sphere $\SS^d$ which has the north pole on the boundary. 
Each of the interfaces joining at the north pole are contained in 
maximal $(d-1)$-spheres in 
$\SS^d$ so that their stereographic projection is contained in a $(d-1)$-dimensional 
plane in $\RR^d$. 
Moreover, each region of a standard partition shares a boundary with 
every other region. 
So Definition~\ref{def:eventually_flat} is satisfied.
The conclusion follows from Lemma~\ref{lem:approximation}, 
Theorem~\ref{th:closure_flat}, and Proposition~\ref{prop:equivalence}.
\end{proof}

By the results on standard clusters already mentioned 
in the introduction,
the above corollary assures that any standard $N$-partition 
of $\RR^d$ is locally isoperimetric
for $N\le \min\{ 5, d+1\}$ \cite{MilNee22} 
or $N=4$ and $d=2$ \cite{Wic04}.
This enables us to give a lot of examples of locally isoperimetric
partitions.

For $N=2$ we have that \emph{half-spaces} are locally isoperimetric partitions in every $\RR^d$.
These can be obtained as the limit of a ball with volume going to infinity.
It is well known that the ball solves the isoperimetric problem.

For $N=3$ we have the \emph{lens} partitions which is the partition of $\RR^d$ composed 
by two half-spaces and a lens-shaped region between them. The lens is composed by two 
symmetrical $(d-1)$-dimensional sphere caps joining in a $(d-2)$-dimensional sphere lying in the 
plane containing the interface between the two unbounded regions.
This partition can be obtained as the limit of double bubbles with a bubble converging to the 
lens and the other converging to an unbounded region.
The isoperimetricity of double bubbles has been proven in 
\cite{Foi93} for $d=2$,
in \cite{HutMorRitRos02} for $d=3$ and in \cite{Rei08} for all dimensions. 
This partition was already shown to be isoperimetric in \cite{AlaBroVri23}.

Again for $N=3$ and $d=2$ we can have the \emph{triple junction} partition 
of $\RR^2$ composed by three unbounded regions whose boundary 
is the union of three half-lines joining with equal angles in a single 
point. 
If $d>2$ we obtain a cylinder over the triple junction partition of $\RR^2$.
These partitions can be seen, again, as the limit of a double bubble in $\RR^d$
by letting the two volumes go to infinity.

For $N=4$ we have the \emph{peanut} partition which is 
a partition with two bounded and two unbounded 
regions obtained by merging together two 
lens partitions with a common planar interface. 
The two lenses can have different volumes. 
This partition can be obtained as the limit of a triple bubble with two bubbles converging 
to the two bounded regions and the third bubble converging to an unbounded region.
In the planar case, this partition has been described in \cite{AlaBroVri23}
and in fact was conjectured to be locally isoperimetric.

Again for $N=4$ we have the \emph{Relaux} partition, 
composed by one bounded and three unbounded regions. 
It is obtained by adding three spherical slices to a triple junction partition.
If $d=2$ the bounded region has the shape of a Relaux triangle. 
In $d=3$ it has the shape of a Brazil nut.
This partition can be obtained as the limit of a triple bubble with one bubble 
converging to the bounded region and the other two (symmetrical)  
bubbles converging to unbounded regions. 

For $N=4$ and $d=3$ we can have the \emph{tetrahedral} partition 
which is a cone-like partition obtained by considering any regular tetrahedron 
and taking as regions each of the four cones with vertex at the center of the tetrahedron
generated by the four faces of the tetrahedron itself.
This partition, which is standard, is the blow up of a triple bubble in $\RR^3$ 
centered in one of the two points in common to all the four regions.
For $d>3$ we obtain a cylinder over the tetrahedral partition of $\RR^3$.

In the case $d\ge 3$
we don't know if these example are unique (up to isometries) 
with their prescribed volumes.
In fact, also for $N\le \min\{ 5, d+1\}$, we cannot exclude that, 
there exists a locally isoperimetric partition 
$\vec F$ of $\RR^d$ which is not the limit of standard clusters.
In that case we would have two different locally isoperimetric partitions, 
the standard one and a non-standard one,
with the same prescribed volumes.
In the case $d=2$ we have instead a uniqueness result, Theorem~\ref{th:uniqueness}.

The main result of \cite{MilNee22} also gives examples of $5$-partitions in $\RR^d$
with $d\ge 4$ which are locally isoperimetric, we do not try to describe 
their geometry.

\section{The planar case}

The following theorem resumes well known properties of minimizers. 
See for example \cite{Mor94,Mag12,Alm76}.

\begin{theorem}[regularity of planar local minimizers]\label{th:recall2}
Let $\vec E=(E_1,\dots,E_N)$ be a locally isoperimetric partition of an open set $\Omega\subset \RR^2$ 
i.e.{}
a partition such that 
for any other given partition $\vec F$ of $\Omega$ with 
$\abs{F_k\cap \Omega}=\abs{E_k\cap \Omega}$ and $F_k\triangle E_k\subset B$ 
for some open bounded $B\Subset \Omega$, one has
\[
  P(\vec E, B) \le P(\vec F, B).
\]
Then the following properties hold:
\begin{enumerate}
  \item $\partial \vec E$ is a locally finite graph composed by straight segments
  or circular arcs meeting in triples with equal angles of 120 degrees;
  \item the three signed curvatures of the arcs meeting in a vertex 
  have zero sum;
  \item it is possible to define a \emph{pressure} $p_i$ for each 
  $i=1,\dots, N$ such that
  the curvature of an arc separating the regions $E_i$ and $E_j$ 
  has curvature $p_i-p_j$ (the sign is chosen so that the curvature is positive 
  when the arc has the concavity towards $E_i$)
\end{enumerate}
If $\vec E$ is any partition satisfying the above properties
we say that $\vec E$ is \emph{stationary}.
If $\vec E(t)=(E_1(t),\dots,E_N(t))$ is a one-parameter curve of partitions 
of $\Omega$
such that $\vec E(t_0)$ is stationary, and 
$E_k(t)\setminus B = E_k(t_0)\setminus B$ for some open set $B\subset \Omega$,
then one has 
\begin{equation}
  \Enclose{\frac{d}{dt} P(\vec E(t),B)}_{t=t_0} 
  = \sum_{k=1}^N p_k \cdot \Enclose{\frac{d}{dt} \abs{E_k(t)\cap B}}_{t=t_0}
\end{equation}
where $p_1,\dots,p_N$ are the pressures of the regions of $\vec E(t_0)$.
\end{theorem}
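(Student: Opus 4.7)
The plan is to view this as a classical compilation of regularity facts and sketch how each piece reduces either to standard cluster regularity (since a locally isoperimetric partition is, by Definition~\ref{def:isoperimetric}, a minimizer with respect to compactly supported volume-preserving variations, and classical proofs of regularity use exactly such variations) or to an elementary combinatorial/potential-theoretic argument on the interface graph. I would treat (1), (2), (3) and the first variation formula in turn, each time invoking Theorem~\ref{th:recall} to get the $C^\infty$ regularity of interfaces away from a relatively closed singular set of Hausdorff dimension $\leq d-2=0$.

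For (1), I would begin by observing that each interface $\partial E_i\cap\partial E_j$ is, in the regular part, a smooth curve of constant signed curvature: the first variation formula for partitions, applied to a normal variation supported in a small disk disjoint from other interfaces and with Lagrange multipliers for the two volumes it touches, forces the curvature to be constant along each such interface. Locally finiteness of the graph and the fact that the singular set is discrete follow from Theorem~\ref{th:recall} together with Lemma~\ref{lm:stima1}: a disk of radius $r$ centered at a point contains perimeter at most $C_0 r$, and a standard monotonicity/tangent cone argument (Taylor's classification in $\RR^2$) shows that at every singular point the only area-minimizing cone is the triple junction with three half-lines meeting at $120^\circ$.

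For (2) and (3), the pressures and the zero-sum-of-curvatures condition at each vertex follow from playing the role of Kirchhoff's law on the interface graph. I would fix a base region, say $E_1$, set $p_1=0$, and extend $p$ to any other region $E_i$ by setting $p_i-p_1 = \sum \kappa_{e}$ along any dual path from $E_1$ to $E_i$, with $\kappa_e$ the signed curvature of the crossed arc. To show this is well defined it suffices to prove consistency around any cycle of the dual graph, hence around every vertex of $\partial\vec E$; this is exactly (2). The vanishing of the curvature sum at a vertex is in turn the balance condition obtained by differentiating the perimeter along a compactly supported variation that moves the triple point infinitesimally while keeping the three adjacent volumes fixed: the three angles are fixed at $120^\circ$ by (1), so the only nontrivial first-order term is $\kappa_1+\kappa_2+\kappa_3$ times the displacement, and this must vanish. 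Then (3) holds by construction.

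Finally, for the first variation formula, I would take a one-parameter family $\vec E(t)$ of partitions differing from the stationary $\vec E(t_0)$ only inside an open set $B$, write
\[
\tfrac{d}{dt}P(\vec E(t),B)\big|_{t=t_0}=\sum_{i<j}\int_{\partial^* E_i\cap \partial^* E_j\cap B}(p_i-p_j)\,X\cdot\nu_{ij}\,d\H^{d-1},
\]
using that the curvature of the $(i,j)$-interface equals $p_i-p_j$ and that the boundary contributions at triple points cancel by (2) together with the $120^\circ$ condition. Regrouping the sum by regions and using that $\tfrac{d}{dt}|E_k(t)\cap B|_{t=t_0}=\int_{\partial^*E_k\cap B}X\cdot\nu_{E_k}\,d\H^{d-1}$ yields $\sum_k p_k\,\tfrac{d}{dt}|E_k(t)\cap B|_{t=t_0}$, as claimed. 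The main obstacle, such as it is, is purely organisational: giving a clean statement of the boundary-term cancellation at triple junctions so that the first variation formula is a genuine identity and not just a stationarity condition; everything else is a direct transcription of the cluster proofs to the partition setting, where the only change is that the competitors are required to have compact support in $\Omega$.
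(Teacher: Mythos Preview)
Your sketch is a reasonable outline of the classical argument, and in fact is more than the paper itself provides: the paper does not prove Theorem~\ref{th:recall2} at all, but simply records it as a compilation of well-known facts with references to Morgan, Maggi, and Almgren. So there is no ``paper's own proof'' to compare against; your proposal is essentially what one finds in those references, organised in the natural order (regularity $\Rightarrow$ triple-junction structure $\Rightarrow$ curvature balance at vertices $\Rightarrow$ existence of pressures $\Rightarrow$ first variation identity).

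One small comment on your treatment of (2)--(3): you reduce well-definedness of the pressures to consistency around cycles of the dual graph, and then assert that it suffices to check the elementary cycles around vertices of $\partial\vec E$. This is correct in the planar case because the cycle space of the dual graph is generated by the face cycles of the dual, which correspond to vertices of the primal graph $\partial\vec E$; but it is worth saying explicitly that planarity is being used here, since in higher genus or non-planar settings one would need an additional argument (or simply invoke the Lagrange multiplier argument directly, which gives the $p_i$ without passing through the graph). Apart from that, the organisation is clean and the boundary-term cancellation at triple junctions in the first variation computation is exactly the point where (1) and (2) are both needed, as you note.
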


\begin{theorem}\label{th:boundedness}
Let $\vec E=(E_1,\dots,E_{N})$ be any locally isoperimetric $N$-partition of the plane $\RR^2$.
Then $\partial \vec E$ is connected and the number of regions of $\vec E$ with infinite area is at least 
$1$ and at most $3$.
If only one area is infinite then $\vec E$ is a bounded cluster.
If two areas are infinite then $\partial \vec E$ coincides with 
a straight line outside a sufficiently large ball. 
If three areas are infinite then $\partial \vec E$ coincides, 
outside a sufficiently large ball, with three half-lines whose 
prolongations define angles of 120 degrees with each other 
(but not necessarily passing through a single point). 
\end{theorem}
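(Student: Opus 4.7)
The plan is to analyze the structure of $\vec E$ outside a large ball by combining the regularity from Theorem~\ref{th:recall} with a blow-down argument and the closure Theorem~\ref{th:closure}. Since $\abs{\RR^2}=+\infty$ and there are finitely many regions, at least one region has infinite measure. By Theorem~\ref{th:recall} every finite-measure region is bounded, so there exists $R_0$ such that for every $R\ge R_0$ all finite regions lie inside $B_R$ and outside $B_R$ only infinite regions appear. Since zero mean curvature in $\RR^2$ means vanishing curvature, each interface between two infinite regions is a union of straight segments, and Theorem~\ref{th:recall2} then implies that outside $B_R$ the set $\partial\vec E$ is a locally finite graph of straight segments meeting in triples at $120^\circ$.

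Next, I would perform a blow-down. Set $\vec E^n\defeq n^{-1}\vec E$; each $\vec E^n$ is locally isoperimetric by scale invariance, and Lemma~\ref{lm:stima1} yields uniform local perimeter bounds. Up to subsequences, $\vec E^n\to\vec E^\infty$ in $L^1_\loc(\RR^2)$ for some partition $\vec E^\infty$. Since the finite regions of $\vec E$ are bounded, their rescaled measures go to zero and they become improper in the limit; Theorem~\ref{th:closure} applied with $J=\emptyset$ then shows $\vec E^\infty$ is a locally minimal partition. Because outside any $B_r$ the boundary of $\vec E$ is already a union of straight segments, $\vec E^\infty$ is conical with vertex at the origin. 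A direct inspection via the $120^\circ$ junction condition classifies planar locally minimal cones: the only possibilities are the trivial partition, two half-planes separated by a straight line, and the triple junction of three $120^\circ$-sectors. Hence $\vec E$ has at most three infinite regions.

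The precise structure outside a large $B_R$ now follows case by case. If there is one infinite region, the entire boundary lies in $B_{R_0}$ and $\vec E$ is a bounded cluster. If there are two, the blow-down is a single line, so for $R$ large the straight-segment graph outside $B_R$ is contained in an arbitrarily thin neighborhood of a fixed line; since any triple junction of $\partial\vec E$ outside $B_R$ would persist under rescaling as a triple junction of $\vec E^\infty$ off the origin (which is impossible), for $R$ large enough the boundary outside $B_R$ reduces to a single straight line. The three-infinite case is analogous, producing exactly three disjoint straight rays in the three $120^\circ$-directions of the blow-down, not necessarily sharing an endpoint because the conical picture only describes the asymptotics.

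For connectedness of $\partial\vec E$, I would proceed by contradiction. If $A$ and $B$ were two connected components of $\partial\vec E$ with $A$ bounded, rigid translation of the portion of $\vec E$ enclosed by $A$ along a direction $v$ preserves both the perimeter and each individual volume, so the resulting partition remains locally isoperimetric. At the first time $t_0>0$ at which the translated copy of $A$ touches the rest of $\partial\vec E$, the resulting configuration has a contact point where four arcs meet (two from the translated $A$ and two from the other component), violating the triple-junction rule of Theorem~\ref{th:recall2}, a contradiction. The main obstacle is the generic-position argument: one must rule out nongeneric tangential or coincident contacts by perturbing the translation direction $v$, and verify that the enclosed portion of $\vec E$ can be rigidly displaced while leaving a valid partition, which is where the boundedness of the finite regions and the flat structure of the infinite ones at infinity (established in the previous paragraphs) are used.
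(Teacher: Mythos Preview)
Your overall blow-down strategy matches the paper's, but there are genuine gaps in the execution.

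First, the application of Theorem~\ref{th:closure} with $J=\emptyset$ is incorrect. The rescalings $\vec E^n$ are \emph{not} locally $\emptyset$-isoperimetric (i.e.\ locally minimal): for each fixed $n$ the finite regions still carry a positive volume constraint, and you could certainly decrease perimeter by violating it. What is true is that $\vec E^n$ is locally $J$-isoperimetric with $J$ the set of indices of the finite regions (the constraint on infinite regions is vacuous). The closure theorem then gives that $\vec E^\infty$ is locally $J$-isoperimetric; only \emph{after} observing that the regions indexed by $J$ have become empty can you discard them and conclude the reduced partition is locally minimal. This is exactly how the paper proceeds.

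Second, and more seriously, you have not established that $\partial\vec E\setminus B_R$ is a \emph{finite} configuration of half-lines. Knowing only that the interfaces outside $B_R$ are straight segments meeting at $120^\circ$ allows, a priori, an infinite tree with infinitely many triple points marching off to infinity; in that case the blow-down need not be a cone in any obvious way, and your conicality claim is unjustified. The paper spends real effort here: it uses the perimeter bound of Lemma~\ref{lm:stima1} to bound the number of ends, argues that all bounded loops lie in $B_R$ (by a cut-and-paste that only exchanges area between infinite regions), and then runs a tree argument to conclude there are finitely many vertices altogether. Only then is the exterior graph a finite collection of half-lines, and a further argument (merging two parallel rays into one, see Figure~\ref{fig:parallel}) shows their directions are pairwise distinct. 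All of this is missing from your sketch.

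Finally, your case analysis at infinity is not right. In the two-infinite case you claim that a triple junction outside $B_R$ would ``persist under rescaling as a triple junction of $\vec E^\infty$ off the origin''; but a triple point at fixed distance $d$ from the origin rescales to distance $d/n\to 0$, so it collapses to the origin and the blow-down sees nothing. The paper instead proves directly that the two half-lines are collinear by a rotation trick (Figure~\ref{fig:rotation}): if they were not, rotating the bounded part creates corners on both rays that strictly shorten perimeter. You have no substitute for this argument.
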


\begin{proof}
By Theorem~\ref{th:recall}, see Corollary~\ref{cor:bnd}, 
we know that the regions with finite area are bounded.
By Theorem~\ref{th:recall2} we known that the boundary of the partition is a locally finite 
planar graph.
Some of the arcs of this graph might be unbounded, in that case we imagine 
that the arc has one or two vertices of order $1$ at infinity 
(which means that different unbounded 
arcs have different unbounded vertices at infinity).
All other vertices have order $3$ because the regularity of the boundary 
in the planar case states that exactly three edges can meet at a vertex point 
with equal angles of 120 degrees.
Since the regions with finite measure are bounded we can find a large radius $R>0$
such that all the bounded regions are compactly contained in $B_R$.
Outside this ball the arcs of the graph $\partial \vec E$ have zero curvature 
because we do not have any local constraint on the area enclosed by infinite 
regions. So, outside $B_R$, the graph $\partial \vec E$ is composed by straight 
lines (with two end-points at infinity), 
lines segments (with two end-points in $\RR^2$), or half-lines (with one  
end-point in $\RR^2$ and one end-point at infinity).

We claim that every bounded closed (hence finite) loop contained in 
$\partial \vec E$ is contained in $B_R$.
In fact take any bounded loop $\gamma$ and suppose that there is an arc 
$\alpha$ not completely contained in $B_R$. 
The two regions separated by this arc have both infinite area, 
because the regions with finite measure are all contained in $B_R$. 
So that $\alpha$ has to be a straight line segment adjacent to two connected components,  
each of just one among the two infinite-area regions. 
One of the two infinite regions separated by $\alpha$ has a connected 
component $C$ which is in the interior of the loop $\gamma$ and is adjacent to 
the arc $\alpha$.
If we remove $\alpha$, and reassign this component  
$C$ to the other infinite-area region we strictly decrease the perimeter, 
while preserving the area constraints,  
because we are exchanging a finite area between two regions with infinite area.
  This is also a variation with compact support since $\gamma$ is bounded.
  Hence we obtain a contradiction with the local isoperimetricity of $\vec E$.

  Now we claim that the graph $\partial \vec E$ has a finite number of vertices
  (and hence a finite number of edges since every vertex has finite order).
  Recall that all vertices of the graph have order 3 
  apart from the vertices at infinity which have order 1 by convention.
  We will call \emph{bounded vertices} the vertices which are not at infinity.
  The estimate 
  $P(\vec E, B_\rho\setminus \bar B_R)\le P(\vec E,B_\rho)\le C_0(2,N)\cdot \rho$
  (given by Lemma~\ref{lm:stima1})
  implies that the number of vertices at infinity is not larger than
  $C_0$ because each vertex at infinity is the end-point of an half-line 
  which asymptotically gives a contribution of $\rho$ to the perimeter 
  in the ball $B_\rho$. 
  So the graph has a finite number of vertices of order one at infinity, 
  in particular there is only a finite number of parallel entire straight lines.

  Suppose now, by contradiction, that we have an infinite number of bounded
  vertices, which have order three. 
  Since there are only a finite number of arcs that are entire  
  straight lines enlarging $R$, we can suppose that outside $B_R$  
  there are no entire straight lines.
  Since the graph is locally finite, we must have at least a sequence  
  of bounded vertices going to infinity.  
  Since the loops of $\partial E$ are all contained in $B_R$
  and since the graph $\partial E$ is locally finite, we have a finite number 
  of closed loops.
  By removing a finite number of arcs in $B_R$ we obtain a subgraph $\Gamma$
  without cycles, which is composed by a finite number of trees, each one touching 
  $B_R$.
  So in $\Gamma$ (and hence in $\partial E$) it is possible to find a
  tree with infinitely many vertices of order three.
  This tree contains hence infinitely many disjoint paths each 
  composed by infinitely many arcs: each such path must go to infinity 
  because the graph is locally finite. 
  But this, again, is in contradiction with the estimate 
  $P(\vec E,B_\rho)\le C_0\cdot \rho$.

  Since the graph $\partial \vec E$ is finite, by further enlarging $R$ 
  we might suppose that $B_R$ contains all the bounded vertices of the graph 
  so that $\vec E\setminus B_R$ 
  is composed by a finite number, 
  let say $n$, of disjoint half-lines emitted by $B_R$ and going to infinity. 
  These half lines, have all different directions towards infinity, because 
  if we had two parallel half-lines with the same direction we could 
  easily merge them into a single long segment and then split it again 
  to obtain a partition with smaller perimeter (see Figure~\ref{fig:parallel}).  
  
  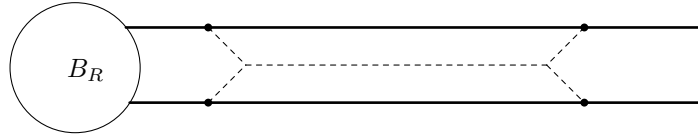
\begin{figure}
  \begin{tikzpicture}[line cap=round,line join=round,>=triangle 45,x=0.5cm,y=0.5cm]
    \clip(-5.4,-0.85) rectangle (13,2.7);
  \draw [dash pattern=on 2pt off 2pt] (1,1)-- (9,1);
  \draw [dash pattern=on 2pt off 2pt] (0,0)-- (1,1);
  \draw [dash pattern=on 2pt off 2pt] (0,2)-- (1,1);
  \draw [dash pattern=on 2pt off 2pt] (10,2)-- (9,1);
  \draw [dash pattern=on 2pt off 2pt] (9,1)-- (10,0);
  \draw(-3.54,0.93) circle (1.73);
  \draw [line width=1pt, domain=-2.190177598854443:13.26510937263622] plot(\x,{(--24.38-0*\x)/12.19});
  \draw [line width=1pt, domain=-2.0900355714923435:13.26510937263622] plot(\x,{(-0-0*\x)/12.09});
  \fill (0,0) circle (1.5pt);
  \fill (10,0) circle (1.5pt);
  \fill (0,2) circle (1.5pt);
  \fill (10,2) circle (1.5pt);
  \draw (-4,1.4) node[anchor=north west] {$B_R$};
  \end{tikzpicture}

\caption{The modification performed in the proof of Theorem~\ref{th:boundedness}
when two rays have the same direction. }
\label{fig:parallel}
\end{figure}

  Now we can consider a \emph{blow-down} of $\vec E$.
  By letting $\vec E^k = \frac{\vec E}{k}$ be a rescaled partition of $\vec E$ 
  we easily notice that $\vec E^k$ converges in $L^1_\loc$ 
  as $k\to+\infty$
  to a partition $\vec E^\infty$ of $\RR^2$ delimited by $n$ half-lines 
  with a common vertex at the origin, 
  each line parallel to the $n$-half-lines 
  of $\vec E\setminus B_R$.
  Since each $\vec E^k$ is a locally $J$-isoperimetric partition,
  with $J$ being the set of indices of the finite regions of $\vec E$,
  by Theorem~\ref{th:closure} we deduce that also $\vec E^\infty$ 
  is a locally $J$-isoperimetric partition.
  However for $j\in J$ the regions $E^k_j = \frac{E_j}{k}$
  converge to the empty set because $E_j$ is bounded. 
  So, we can remove the regions $E_j^\infty$ from $\vec E^\infty$ and obtain 
  a locally isoperimetric partition of $\RR^2$ composed by $n$ angles 
  with a common vertex in the origin.
  These angles cannot be smaller than 120 degrees (by the general regularity results 
  or by simple geometric considerations) hence $n\le 3$.
  In the case $n=0$ the graph $\partial \vec E$ has no vertices at infinity and  
  hence the partition $\vec E$ is a cluster.
  In the case $n=3$ we have that $\partial \vec E\setminus \bar B_R$ 
  is made of three half-lines going to infinity with 
  relative angles of 120 degrees. 

  In the case $n=2$ we have that $\partial \vec E\setminus \bar B_R$ is made 
  of two half-lines going to infinity at opposite 
  directions. In this case we claim that the two lines are collinear 
  (i.e.{} are contained in the same straight line). 
  
In fact if they were not collinear we could rotate the bounded cluster to 
  which they are attached, 
  creating two angles in the half-lines and strictly decreasing the perimeter
 (see Figure~\ref{fig:rotation}).
  
\begin{figure}
  \begin{tikzpicture}[line cap=round,line join=round,>=triangle 45,x=1.0cm,y=1.0cm]
  \clip(-4.30,-2.02) rectangle (4.30,2.02);
  \draw(0,0) circle (1cm);
  \draw [line width=1.2pt,domain=0.5905099943842171:4.452164861944947] plot(\x,{(--3.41--0.01*\x)/4.23});
  \draw [line width=1.2pt,domain=-5.120681908947257:-0.5905099943842171] plot(\x,{(--1-0*\x)/-1.24});
  \draw (1.83,0.81)-- (0.915,0.405);
  \draw (-0.915,-0.405)-- (-1.83,-0.81);
  \begin{scriptsize}
  \fill (0.59,0.81) circle (1.5pt);
  \fill (1.83,0.81) circle (1.5pt);
  \fill (-0.59,-0.81) circle (1.5pt);
  \fill (-1.83,-0.81) circle (1.5pt);
  \fill (0.91,0.41) circle (1.5pt);
  \fill (-0.91,-0.41) circle (1.5pt);
  \end{scriptsize}
  \end{tikzpicture}
  \caption{The rotation performed in the proof of Theorem~\ref{th:boundedness}.
  We can suppose that the two non collinear half-lines are emitted at 
  diametrically opposite points of $B_R$.}
\label{fig:rotation}
\end{figure}
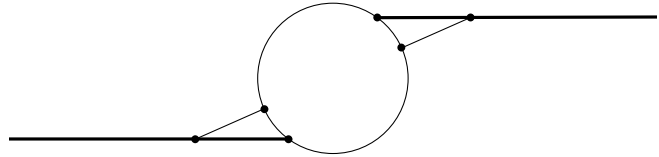

\end{proof}

\begin{theorem}[existence]\label{th:existence}
  Let $m_k\in [0,+\infty]$ for $k=1,\dots, N$ be a given 
  $N$-uple of areas such that at least one and 
  at most three of the $m_k$ are infinite.
  Then there exists an isoperimetric partition 
  $\vec E = (E_1, \dots, E_{N})$
  of $\RR^2$ whose regions have the prescribed measures.
  
  If all the areas are finite or at least four 
  of them are infinite then there are no isoperimetric partitions
  with the prescribed measures.
\end{theorem}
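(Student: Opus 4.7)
The non-existence part of the statement is immediate from material already in the excerpt. If every $m_k$ is finite, any partition of $\RR^2$ would satisfy $\sum_k |E_k|=\sum_k m_k<+\infty$, contradicting $|\RR^2|=+\infty$, so no partition (a fortiori no isoperimetric one) can exist. If at least four of the $m_k$ equal $+\infty$, Theorem~\ref{th:boundedness} already rules out any locally isoperimetric $N$-partition of $\RR^2$ with more than three infinite regions.

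For existence, the plan is to produce $\vec E$ as an $L^1_\loc$-limit of isoperimetric clusters and then invoke Theorem~\ref{th:closure_flat}. Relabel so that $m_1,\dots,m_n=+\infty$ with $1\le n\le 3$ and $m_{n+1},\dots,m_N<+\infty$. For each integer $j$, I pick an isoperimetric $(N-1)$-cluster $\vec E^j=(E^j_1,\dots,E^j_N)$ in which $E^j_1$ is the exterior region, $|E^j_k|=j$ for $2\le k\le n$, and $|E^j_k|=m_k$ for $k\ge n+1$; existence of such minimizers is classical. By Proposition~\ref{prop:equivalence} each $\vec E^j$ is locally isoperimetric as an $N$-partition, and by Theorem~\ref{th:recall} its finite-area regions are bounded. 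After translating $\vec E^j$ so that a fixed reference point (e.g.\ a boundary point of $\vec E^j$) sits at the origin, Lemma~\ref{lm:stima1} gives uniform perimeter bounds on every $B_R$, and a diagonal argument extracts a subsequence with $E^j_k\to E_k$ in $L^1_\loc(\RR^2)$; the limit $\vec E=(E_1,\dots,E_N)$ is a partition of $\RR^2$.

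The heart of the proof, and the main obstacle, is showing that \emph{no mass escapes to infinity}, i.e.\ that $|E_k|=m_k$ for every $k\ge n+1$ (equivalently $|E_k|=+\infty$ for $k\le n$, since the corresponding total measures in $\vec E^j$ diverge). Granting this, Theorem~\ref{th:boundedness} forces $\vec E$ to be eventually flat, so Theorem~\ref{th:closure_flat} applied to the sequence $\vec E^j$ immediately delivers $\vec E$ as a locally isoperimetric partition with the prescribed volumes. This no-escape step is where the hypothesis $d=2$ is indispensable. My plan is a concentration–compactness dichotomy: vanishing is precluded by the density estimates of Theorem~\ref{th:recall}; dichotomy (a splitting of $\vec E^j$ into two sub-clusters whose mutual distance diverges) is precluded by a planar pressure-and-curvature comparison in the spirit of Theorem~\ref{th:recall2}. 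Concretely, if a component of a finite region $E^j_k$ drifted off, one would delete the escaping piece and restore its mass inside the central bounded cluster via Theorem~\ref{th:volume_fixing_variations_alternative}, saving the surface area of the far-away bubble at negligible central cost and thereby contradicting the isoperimetricity of $\vec E^j$ for $j$ large. The planar toolkit (Almgren's and the Infiltration Lemmas recalled in the Appendix, together with the finite-graph structure of $\partial\vec E^j$ exploited in the proof of Theorem~\ref{th:boundedness}) is what makes this rearrangement effective.
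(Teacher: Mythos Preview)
Your non-existence argument is fine, but the existence scheme has a genuine circularity and a missing quantitative step.

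The circularity is this: you want to apply Theorem~\ref{th:closure_flat}, which requires the limit $\vec E$ to be eventually flat, and you deduce flatness from Theorem~\ref{th:boundedness}. But Theorem~\ref{th:boundedness} applies only to partitions that are already locally isoperimetric, which is exactly what you are trying to prove. You cannot bypass this by invoking Theorem~\ref{th:closure} directly: since each $\vec E^j$ is a cluster with only $E^j_1$ infinite, it is locally $J$-isoperimetric only for $J\supseteq\{2,\dots,N\}$ (the volume of every finite region, including the blowing-up ones, is constrained). Theorem~\ref{th:closure} then yields a limit that is merely $\{2,\dots,N\}$-isoperimetric, whereas ``locally isoperimetric'' for $\vec E$ means $\{n+1,\dots,N\}$-isoperimetric (no local constraint on the $n$ infinite regions). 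Passing from the former to the latter is precisely the step for which the paper introduces eventual flatness, so you are back where you started. A related issue is that your recentering ``at a boundary point of $\vec E^j$'' does not by itself guarantee that the limit has $n$ infinite regions rather than fewer.

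The no-escape sketch is also not a proof. Deleting a drifting component $C$ saves $P(C)\gtrsim |C|^{1/2}$, while Theorem~\ref{th:volume_fixing_variations_alternative} in $d=2$ restores the missing area at cost $\sim |C|^{1/2}$; the two are of the same order and no contradiction follows. If $C$ sits inside one of the large bubbles $E^j_2,\dots,E^j_n$ you must also repair that bubble's volume, and nothing you cite prevents the fixed-area regions from sitting on opposite sides of a bubble whose diameter tends to infinity.

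The paper takes a different route that avoids both problems. Rather than blowing up isoperimetric clusters, it fixes a reference cone $\vec C$ (a line for two infinite regions, a triple junction for three), and for each large $R$ minimizes $P(\cdot,\overline{B_R})$ among partitions equal to $\vec C$ outside $\overline{B_R}$ with the prescribed finite areas. Because the infinite regions carry no area constraint in this competition, the minimizer is $\{n+1,\dots,N\}$-isoperimetric in $B_R$ from the outset, so Theorem~\ref{th:closure} (not Theorem~\ref{th:closure_flat}) already gives a locally isoperimetric limit, with the flat structure at infinity built in by the boundary data. The substantial work is a Steiner-tree comparison on the three pinned points of $\partial B_R$, which bounds the perimeter (hence diameter) of the bounded ``subcluster'' independently of $R$, together with a combinatorial tree argument reducing to a single subcluster near the centre; this is what replaces your concentration--compactness step and is where the planarity is actually used.
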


\begin{proof}
Theorem~\ref{th:boundedness} guarantees that 
in a locally isoperimetric partition there are at least one and at most
three infinite areas, 
so the second part of the statement has already been proved.

Let $M$ be the number of infinite areas, 
$1\le M\le 3$. 
Without loss of generality suppose that 
the infinite areas are the first $M$: 
$m_1$ if $M=1$, $m_1,m_2$ if $M=2$ and 
$m_1,m_2,m_3$ if $M=3$ while 
$m_k<+\infty$ for $k=M+1,\dots, N$.

If $M=1$ there exists an isoperimetric 
$(N-1)$-cluster $(E_2,\dots,E_{N})$ 
with the prescribed finite measures $m_2,\dots, m_N$. 
By adding the external region $E_1=\RR^2\setminus\bigcup_{k=2}^N E_k$ 
we obtain 
a locally isoperimetric partition with the given measures
$|E_k|=m_k$ for $k=1,\dots, N$.

We now consider the cases $M=2$ and $M=3$. 
If $M=2$ we let $\vec C = (C_1,C_2,\emptyset, \dots,\emptyset)$
be an $N$-partition such that $\partial \vec C$ is a straight 
line passing through the origin. 
If $M=3$ we let $\vec C = (C_1,C_2,C_3,\emptyset, \dots,\emptyset)$
be an $N$-partition such that 
$\partial \vec C$ 
is the union of three half-lines emitted by the origin 
with equal angles of 120 degrees. 

In both cases we consider a radius $R>0$ so large that 
$\abs{B_R}> m_{M+1}+\dots+m_{N}$. 
We then consider the family of all partitions $\vec E$ of 
$\RR^2$ which coincide with $\vec C$ outside of $\overline{B_R}$ 
and such that $\abs{E_k}=m_k$ for $k=M+1,\dots,N$.
In this family we minimize $P(\vec E,\overline{B_R})$ 
(thus taking into account also the length of $\partial B_R\cap \partial \vec E$).
By compactness and semicontinuity we know that a minimizer 
$\vec E$ always exists.
The minimizer, restricted to $B_R$, is a $J$-isoperimetric partition of $B_R$ 
with $J=\ENCLOSE{M+1,\dots, N}$ (see Definition~\ref{def:mixed_isoperimetric}).


Now we are going to complete the proof in the case $M=3$.
The case $M=2$ is similar but simpler so we don't treat it here.

\emph{Step 1: obtain an estimate on the perimeter of the sub-clusters.}
Let us define $D$ to be the union of all the bounded connected components 
of the regions $E_1,\dots,E_N$. 
Since the bounded components are all contained in $\bar B_R$ we have that 
$D\subset \bar B_R$ and since $E_4,\dots,E_N$ are bounded 
we have $D\supset E_4\cup \dots \cup E_N$. 
Moreover $D$ will also contain the 
bounded connected components of $E_1,E_2,E_3$, if they exist.
For $j=1,2,3$ let $E_j'=E_j \setminus D$ be the only unbounded connected
component of $E_j$ and 
consider the following partitions of $\RR^2$:
\begin{align*}
  \vec F^1 = (E_1'\cup D,E_2',E_3'), 
  \qquad 
  \vec F^2 = (E_1',E_2'\cup D,E_3'),
  \qquad
  \vec F^3 = (E_1',E_2',E_3'\cup D),
\end{align*}
and 
\[
  \vec G = (E_1',E_2',E_3',D).  
\]
Let $\partial \vec C \cap \partial B_R = \ENCLOSE{p_1, p_2, p_3}$ 
be the three fixed points on $\partial B_R$ enumerated so that
the half-line terminating in $p_j$ is non contained 
in $\partial E_j$, for $j=1,2,3$.
Notice that 
\[
  P(\vec F^1, \bar B_R) 
  = \H^1((\partial E_2' \cup \partial E_3')\cap \bar B_R).
\]
Since $E_2'$ is simply connected we know that 
$\partial E_2' \cap \bar B_R$ is a compact and connected set 
containing the two point $p_1$ and $p_3$.
Analogously $\partial E_3'\cap \bar B_R$
is a compact connected set containing $p_1$ and $p_2$.
Hence $(\partial E_2'\cup \partial E_3')\cap \bar B_R$
is a compact connected set containing $\ENCLOSE{p_1,p_2,p_3}$.
We know that the shortest compact connected 
set containing $\ENCLOSE{p_1,p_2,p_3}$ is the classical 
Steiner tree on the three vertices, which is known to have 
length $3R$ (see, for example, \cite{PaoSte12}). 
The same reasoning can be applied to the other two partitions
so we have 
\[
  P(\vec F^1,\bar B_R) + P(\vec F^2,\bar B_R) + P(\vec F^3, \bar B_R) 
    \ge 9R.
\]
On the other hand we have 
\[
  P(\vec F^1, \bar B_R) 
  = \H^1(\partial E_1' \cap \partial E_2')
  + \H^1(\partial E_1' \cap \partial E_3')
  + \H^1(\partial E_2' \cap \partial E_3')
  + \H^1(\partial D)
  - \H^1(\partial E_1' \cap \partial D)
\]
and analogously for $\vec F^2$ and $\vec F^3$.
Summing up we obtain 
\begin{align*}
 9R & \le 3 \Enclose{
    \H^1(\partial E_1' \cap \partial E_2')
  + \H^1(\partial E_1' \cap \partial E_3')
  + \H^1(\partial E_2' \cap \partial E_3')}
  + 2 P(D)\\
  & = 3 P(\vec G, \bar B_R) - 2 P(D)
\end{align*}
And since $P(\vec G,\bar B_R) \le P(\vec E,\bar B_R)$
we obtain 
\[
   P(D) \le 3 P(\vec E,\bar B_R)- 9R.
\]
Now it is not diffucult to estimate $P(\vec E,\bar B_R)$ 
by taking a competitor of $\vec E$ composed by the triple 
junction $\vec C$ with $N-M$ balls with given areas.
This implies that 
\[
  P(\vec E, \bar B_R) 
  \le 3 R + 2\pi\sum_{k=4}^{N} \sqrt{\frac{m_k}{\pi}}
\]
In conclusion we have found
\[
  P(D) \le d := 6\sqrt \pi\sum_{k=4}^{N} \sqrt{m_k}.
\]

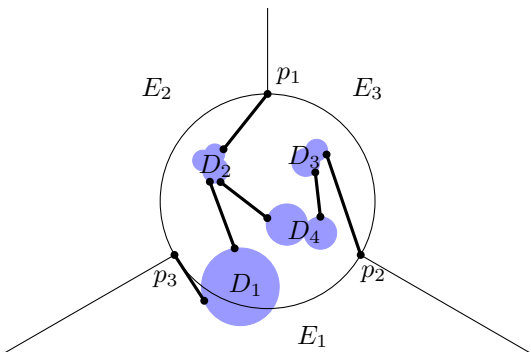
\begin{figure}
\definecolor{xdxdff}{rgb}{0.49,0.49,1}
\definecolor{uququq}{rgb}{0.25,0.25,0.25}
\definecolor{zzzzff}{rgb}{0.6,0.6,1}
\definecolor{qqqqff}{rgb}{0,0,1}
\begin{tikzpicture}[line cap=round,line join=round,>=triangle 45,x=0.2cm,y=0.2cm]
\clip(-27.2,-10) rectangle (23.52,12.85);
\draw [color=zzzzff,fill=zzzzff,fill opacity=1.0] (2.54,2.58) circle (0.92);
\draw [color=zzzzff,fill=zzzzff,fill opacity=1.0] (3.26,3.4) circle (0.72);
\draw [color=zzzzff,fill=zzzzff,fill opacity=1.0] (-3.47,2.06) circle (0.85);
\draw [color=zzzzff,fill=zzzzff,fill opacity=1.0] (-4.29,2.71) circle (0.69);
\draw [color=zzzzff,fill=zzzzff,fill opacity=1.0] (-3.51,3.15) circle (0.66);
\draw [color=zzzzff,fill=zzzzff,fill opacity=1.0] (1.28,-1.55) circle (1.37);
\draw [color=zzzzff,fill=zzzzff,fill opacity=1.0] (3.51,-2.11) circle (1.08);
\draw [color=zzzzff,fill=zzzzff,fill opacity=1.0] (-1.81,-5.69) circle (2.58);
\draw(0,0) circle (7.14);
\draw (0,7.14) -- (0,12.85);
\draw [domain=6.186674063787027:23.524063979025065] plot(\x,{(-0-5.29*\x)/9.15});
\draw [domain=-27.204873044599346:-6.186674063787026] plot(\x,{(-0-3.93*\x)/-6.81});
\draw (0,7.14) node[anchor=south west] {$p_1$};
\draw (5.6,-6) node[anchor=south west] {$p_2$};
\draw (-5.3,-4.0) node[anchor=north east] {$p_3$};
\draw (-9,6.14) node[anchor=south west] {$E_2$};
\draw (5,6.14) node[anchor=south west] {$E_3$};
\draw (3,-9) node {$E_1$};

\draw [line width=1.2pt] (-2.93,3.46)-- (0,7.14);
\draw [line width=1.2pt] (-3.13,1.28)-- (-0.02,-1.12);
\draw [line width=1.2pt] (-3.84,1.3)-- (-2.19,-3.13);
\draw [line width=1.2pt] (-4.21,-6.62)-- (-6.19,-3.57);
\draw [line width=1.2pt] (3.51,-1.03)-- (3.17,1.92);
\draw [line width=1.2pt] (6.19,-3.57)-- (3.92,3.11);
\draw (-3.2,-4.2) node[anchor=north west] {$D_1$};
\draw (-5.2,3.7) node[anchor=north west] {$D_2$};
\draw (0.7,4.3) node[anchor=north west] {$D_3$};
\draw (0.7,-0.7) node[anchor=north west] {$D_4$};
\begin{scriptsize}
\fill (-0.02,-1.12) circle (1.5pt);
\fill (3.17,1.92) circle (1.5pt);
\fill (0,7.14) circle (1.5pt);
\fill (-6.19,-3.57) circle (1.5pt);
\fill (6.19,-3.57) circle (1.5pt);
\fill (-3.13,1.28) circle (1.5pt);
\fill (-2.93,3.46) circle (1.5pt);
\fill (-3.84,1.3) circle (1.5pt);
\fill (-2.19,-3.13) circle (1.5pt);
\fill (-4.21,-6.62) circle (1.5pt);
\fill (3.92,3.11) circle (1.5pt);
\fill (3.51,-1.03) circle (1.5pt);
\end{scriptsize}
\end{tikzpicture}
\caption{
The subclusters $D_1,\dots,D_4$ in the proof of Theorem~\ref{th:existence}
Step 2.}
\label{fig:subclusters}
\end{figure}

\emph{Step 2: prove that there is a single subcluster.}
We now consider the connected components $D_1,\dots,D_m$
of $\bar D$ and call them \emph{subclusters}. 
The arcs of $\partial E$ which are not contained in $\bar D$
are separating two of the infinite regions. 
Having no constraint on the area of these regions we conclude 
that such arcs are straight line segments.
The union of $D$ with these segments is a compact connected 
set containing the three points $p_1,p_2,p_3$. 
It is connected because otherwise the three external components 
$E_1'$, $E_2'$ and $E_3'$ would not be separated by $\partial E$.

We now consider a minimization problem where we take 
any possible rigid motion of each subcluster
(in the whole plane) and any possible union of straight line 
segments so that the union of the subclusters and the segments
is a compact connected set containing the three points $p_1,p_2,p_3$
(see Figure~\ref{fig:subclusters}).
Then consider the configuration that minimizes the total length of the 
segments with the constraint that different subclusters do not 
overlap (the intersection of two subclusters has zero measure).
The diameter of each subcluster is bounded by a constant $d<R$, as shown in Step 1,
hence we might assume that the subclusters are uniformly bounded.
Also, the total number of segments is bounded because it is not convenient 
to join two different components more than once.
The existence of a minimizer is hence guaranteed because we 
are dealing with a finite dimensional minimization problem 
of a continuous function over a compact set (the position and orientation 
of the subclusters with the non-overlapping constraint 
and the position of the extremal points of the segments with the connectedness 
constraint).

The topology of the minimizer can be described by a graph $\Gamma$ 
which has the segments as arcs. 
The vertices of $\Gamma$ are represented by the three points $p_1,p_2,p_3$,
the subclusters $D_1,\dots, D_m$ and eventually other additional points 
where three segments meet in their extremal points
(in a minimizer we can always suppose that when many segments meet in a point 
which is not a point of a subcluster, the point is an extreme 
of each segment and the segments meet in triples with equal angles).
If two different subclusters of this minimizer are touching each other,
we consider their union as a single vertex of the graph $\Gamma$.
If a subcluster is touching one of the three points $p_1,p_2,p_3$, we assume 
there is an arc (representing a segment of zero length) 
joining the point to the subcluster.

The graph $\Gamma$ is connected in view of the connectedness requirement 
in the minimization problem. 
The three fixed points $p_1,p_2,p_3$ are vertex of order one in $\Gamma$
(if one of them has order two we can imagine to add a segment of zero length
to normalize the order to one).
There cannot be other vertices of order one because if a subcluster has a single 
segment attached to it, we could translate the subcluster along the segment
to decrease the total length.
Also, the graph $\Gamma$ contains no cycle because if there were a cycle
we could remove one of the segments of the cycle and decrease the total length.
So $\Gamma$ is a tree.

Now notice that if a vertex of $\Gamma$ has order two, the vertex represents 
a subcluster with two segments attached to it. 
These two segments must be collinear because otherwise we could rotate
the subcluster around one of the two points of contact to decrease the length 
of the segment joining in the other point.
Notice that if $R$ is large enough, since each subcluster has diameter 
not larger than $d$, it is not possible that a subcluster touches two of 
the three points $p_1,p_2,p_3$, so one of the two segments connecting 
the subcluster has positive length.
Now, since the two segments are collinear, we can translate the subcluster along 
the common line containing them and eventually 
make the subcluster bump against another subcluster.
In this way a vertex of order two can be removed and we 
can suppose that the graph contains no vertices of order two.
If both the segments emitted by a subcluster of order two are directly 
connected to two of the three vertices $p_1,p_2,p_3$, one of the two 
vertices must have order two and moving the subcluster against this vertex 
it will become a vertex of order three.

Now a simple combinatorial argument shows that a tree with exactly three 
vertices of order one and no vertices of order two must be a tripod.
This means that in our minimizer there is a single subcluster with three 
segments attached to it, each one touching one of the three points $p_1,p_2,p_3$.
Moreover this subcluster has diameter not larger than $d$ (see Step 1).

We now claim that if $R$ is large enough the subcluster must be contained 
in $B_{R/2}$. 
In fact if the subcluster is not contained in $B_{R/2}$
it contains a point $q$ such that $\abs{q}>R$ and the total
length of the three segments would be not smaller than 
$\abs{q-p_1} + \abs{q-p_2} + \abs{q-p_3} - 3d$.
We know that the minimum of $\abs{q-p_1} + \abs{q-p_2} + \abs{q-p_3}$ 
is uniquely obtained for $q=0$ and is equal to $3R$ (this is a well known 
Steiner problem on three points). 
If we add the restriction $\abs{q}\ge R/2$ we obtain that the minimum
length must have some length $cR$ with $c>3$.
So if the subcluster is not contained in $B_{R/2}$ the total length 
of the segments would be at least $cR-3d$.
On the other if we move the subcluster to touch the origin we would 
have a length not larger than $3R$ and $3R<cR-3d$ if $R$ is large enough.

At this point our minimizer represents a partition $\vec E'$ 
which is a competitor to the original partition $\vec E$
and has not larger perimeter. 
So we can replace $\vec E$ with $\vec E'$
and assume that $\vec E$ has a single subcluster $D$ with three segments
joining it to the three points $p_1,p_2,p_3$. 

\emph{Step 3. Equi-boundedness of the subcluster.}
At this point we have a partition $\vec E'$ which has the same area constraints 
of the initial partition $\vec E$ and with a perimeter which is not larger.
We would like to prove that the single subcluster in $\vec E'$ is completely 
contained in $B_R$ so that $\vec E'$ is a minimizer of the same problem 
for which $\vec E$ was a minimizer.

Notice that the single subcluster $D'$ of $\vec E'$ is obtained by taking all the connected 
components of the set $D$ defined in Step 1 (which were the subclusters of $\vec E$)
and joining them together without changing areas and perimeter. 
So $P(D')=P(D) \le d$ where $d$ is defined in Step 1 and is independent from $R$.
Since $\partial D'$ is connected, we can state that $D$ has diameter smaller than $d$
and hence is contained in a ball of radius smaller than $d$.
If $R$ is large enough we can hence suppose that $D'$ is contained in $B_{R/2}$,
otherwise we could move it towards the origin and decrease the length 
of the three connecting segments.

This means, finally, that the resulting partition $\vec E''$ is a minimizer as 
well as $\vec E$ and, without loss of generality, we can replace $\vec E$ 
with $\vec E''$.

\emph{Step 4. Letting $R\to +\infty$.}
For each large $R$ we consider the minimizer given by Step 3 and call it 
$\vec E^R$ to make explicit the dependence on $R$.
We know that $\vec E^R$ has a single sub-cluster which is contained in a ball 
of radius $d$ centered in some point $q_R$ such that $\abs{q_R}+d<R$.

We claim that $\frac{\abs{q_R}}{R} \to 0$ as $R\to +\infty$.
In fact suppose by contradiction that there is a sequence 
or $R\to+\infty$ such that $\abs{q_R}\ge 4\sqrt{dR}$.
If we translate $D$ by $-q_R$ and replace the three
segments joining the inner cluster to the circle $\partial B_R$
with three segments joining towards the center of the circle,
we obtain a competitor cluster $\vec F^R$ with the same 
prescribed areas, and whose perimeter can be estimated
by
\[
  P(\vec F^R,\overline{B_R}) \le 3R + P(D) \le 3R + d.
\]
On the other hand since $(\partial \vec E^R)\setminus B_d(q^R)$ 
is composed by three line segments, if we consider the union 
$\Sigma_R$ of the three segments joining the three vertices on 
$\partial B_R$ with the point $q^R$, we have the estimate
\[
  P(\vec E^R,\overline{B_R})
  \ge \H^1(\Sigma_R)-3d,
\]
and, by Lemma~\ref{lm:steiner} below,
\[
  \H^1(\Sigma_R)
  \ge R \cdot \ell\enclose{\frac{\abs{q^R}}{R}}
  \ge R \cdot \ell\enclose{4{\sqrt {\frac d R}}}
  \ge 3R + 12d + o(1).
\]
This is in contradiction with the minimality of $\vec E^R$,
because we obtain, for sufficiently large $R$:
\[
   P(\vec E^R,\overline{B_R}) - P(\vec F^R)
   \ge \H^1(\Sigma_R)-3d - (3R + d)
   \ge 9d + o(1) \ge 0.
\]

For all $R$ we have proven that the translated regions 
$F^R_k = E^R_k - q_R$ are all contained in the same 
ball $B_d$. 
Moreover, by the previous claim, we notice that the 
translated balls $B_R(-q^R)$ invade the whole plane
since $\abs{q^R}$ is much smaller than $R$ on a suitable 
sequence $R\to +\infty$.
So there is a sequence $R_n \to \infty$ and measurable sets 
$F_4,\dots, F_{N}$ 
such that 
for all $k > 3$ one has $F^{R_n}_{k}\to F_k$ in $L^1$.
Also, up to a subsequence, each of the three rays emanating from the 
subcluster of $\vec E^{R_n}$ must converge to a ray which has the same 
direction of the corresponding ray of the reference cone $\vec C$.
These rays divide the complement of the union of the 
regions $F_4,\dots,F_{N}$ into three regions $F_1,F_2,F_3$ 
which are thus the $L^1_{\loc}$ limit of the corresponding regions 
$F_1^{R_k}$, $F_2^{R_k}$, $F_3^{R_k}$ as $k\to +\infty$.
So we have found a partition $\vec F=(F_1,\dots,F_{N})$ of $\RR^2$
which is the $L^1_{\loc}$ limit of the partitions $\vec F^{R_k}$ 
of the balls $B_{R^k}$.
Clearly $\abs{F_1} = \abs{F_2} = \abs{F_3} = +\infty$ 
and $\abs{F_k} = m_k$ for $k=M+1,\dots, N$.

The partitions $\vec E^{R_n}$ 
are all locally $J$-isoperimetric in 
$\Omega_n = B_{R_n}-q_{R_n}$ with $J=\ENCLOSE{4,\dots, N}$
and hence, by Theorem~\ref{th:closure},
we obtain that $\vec F$ is locally $J$-isoperimetric in $\RR^2$.
But this is equivalent to say that $\vec F$ 
is locally isoperimetric in $\RR^2$, which is our conclusion.
\end{proof}

\begin{lemma}\label{lm:steiner}
  Let $V_1,V_2,V_3$ be the three vertices of a regular 
  triangle inscribed in the circle $\partial B_1$ centered 
  in the origin of the plane $\RR^2$.
  Let $P$ be any point and let 
  $\Sigma_P = [P,V_1]\cup[P,V_2]\cup[P,V_3]$ 
  be the union of the three segments joining $P$ with the
  vertices of the triangle.
  Let
  \[
    \ell(\rho) = \inf\ENCLOSE{\H^1(\Sigma_P)\colon \abs{P}\ge \rho}.
  \]

  Then, for $\rho\to 0^+$ one has
  \[
    \ell(\rho) \ge 3 + \frac{3}{4}\rho^2 + o(\rho^2).
  \]
\end{lemma}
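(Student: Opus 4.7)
The plan is to observe that $\H^1(\Sigma_P) = f(P) \defeq \abs{P-V_1} + \abs{P-V_2} + \abs{P-V_3}$ is a smooth (away from the three vertices) convex function of $P$, and simply to Taylor expand it to second order at the origin. Convexity is immediate since $f$ is a sum of distances; by the symmetry $V_1+V_2+V_3=0$ one has $\nabla f(0) = -\sum_{i=1}^3 V_i/\abs{V_i} = 0$, so $0$ is the global minimum of $f$ with $f(0)=3$. Convexity then forces the infimum $\ell(\rho)$ over $\ENCLOSE{\abs{P}\ge\rho}$ to be attained on the circle $\ENCLOSE{\abs{P}=\rho}$, for every $\rho>0$.

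Next I would compute the Hessian of $f$ at the origin. For each $i$, the Hessian of $P\mapsto\abs{P-V_i}$ at $P=0$ (where $\abs{V_i}=1$) is the standard $I - V_iV_i^T$, so $\nabla^2 f(0) = 3I - \sum_{i=1}^3 V_iV_i^T$. By the three-fold rotational symmetry, $\sum_i V_iV_i^T$ must be a scalar multiple of the identity, and its trace equals $\sum_i\abs{V_i}^2 = 3$, so $\sum_i V_iV_i^T = \tfrac{3}{2}I$ and therefore $\nabla^2 f(0) = \tfrac{3}{2}I$. The isotropy of this Hessian is the crucial feature, because it makes the second-order correction independent of the direction of $P$.

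Applying Taylor's formula at the origin then yields
\[
  f(P) = 3 + \tfrac{3}{4}\abs{P}^2 + o(\abs{P}^2) \qquad \text{as } \abs{P}\to 0^+.
\]
Since $\ell(\rho)$ is attained on $\ENCLOSE{\abs{P}=\rho}$ and the expansion above is uniform in the direction of $P$ (thanks to isotropy), we conclude $\ell(\rho) = 3 + \tfrac{3}{4}\rho^2 + o(\rho^2)$, which is stronger than the claimed inequality.

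There is no real obstacle in this argument: the only point that needs care is verifying that $\sum_i V_iV_i^T = \tfrac{3}{2}I$, which however follows either from the one-line symmetry argument above or from a direct matrix computation with $V_i = (\cos(2\pi i/3), \sin(2\pi i/3))$. Everything else is routine Taylor expansion combined with the observation that the infimum is attained on the boundary circle.
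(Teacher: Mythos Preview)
Your proof is correct and follows essentially the same approach as the paper's: both reduce to $\abs{P}=\rho$ by convexity and then Taylor expand $f(P)=\sum_i\abs{P-V_i}$ at the origin. Your route via the Hessian and the identity $\sum_i V_iV_i^T=\tfrac{3}{2}I$ is somewhat cleaner---the paper expands the square roots by hand and only uses the crude bound $\langle P,V_i\rangle^2\le\rho^2$, which still yields the stated inequality but not the sharp equality you obtain.
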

\begin{proof}
  By assumption we have $\abs{V_i}=1$, $V_1+V_2+V_3=0$.
Since the function $P\mapsto \H^1(\Sigma_P)$ is coercitive 
and convex on $\RR^2$, 
the infimum defining $\ell(\rho)$ is attained
in a point $P$ such that $\abs{P}=\rho$.
The proof is concluded by the following computation: 
\begin{align*}
  \H^1(\Sigma_P) 
  &= \sum_{i=1}^3 \abs{P-V_i}
  = \sum_{i=1}^3 \sqrt{1+\rho^2-2\langle P,V_i \rangle}\\
  &= \sqrt{1+\rho^2}
  \sum_{i=1}^3\sqrt{1-2\frac{\langle P, V_i\rangle}{1+\rho^2}} \\
  &= (1+\rho^2 + o(\rho^2))
  \sum_{i=1}^3\Enclose{1-\frac{\langle P, V_i\rangle}{1+\rho^2}
  -\frac{1}{4}\enclose{\frac{\langle P, V_i\rangle}{1+\rho^2}}^2 
  + o(\rho^2)}\\
  &= (1+\rho^2 + o(\rho^2))
  \Enclose{3-\frac{1}{4}\sum_{i=1}^3\enclose{\frac{\langle P,V_i\rangle}{1+\rho^2}}^2+o(\rho^2)}\\
  &\ge (1+\rho^2 + o(\rho^2))
  \Enclose{3-\frac{3}{4}\rho^2+o(\rho^2)}\\
  &= \enclose{1 + \rho^2 + o(\rho^2)}
  \cdot \Enclose{3 - \rho + o(\rho^2)}
  = 3 + \frac{3}{4} \rho^2 + o(\rho^2).
\end{align*}
\end{proof}

\begin{theorem}[uniqueness]\label{th:uniqueness}
  Let $\vec E$ be any locally isoperimetric $N$-partition of the plane $\RR^2$
  with $N\le 4$.
  Then $\vec E$ is standard.
  This means that in the planar case the standard partitions, 
  enumerated in section~\ref{sec:standard},
  are the only locally isoperimetric partitions with their given 
  areas.
\end{theorem}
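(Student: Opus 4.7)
The plan is to split the analysis on the number $M$ of infinite regions, which by Theorem~\ref{th:boundedness} satisfies $1\le M\le 3$. Combined with $N\le 4$ this yields a handful of combinations, one for each standard partition listed in Section~\ref{sec:standard}. When $M=1$, $\vec E$ is an $(N{-}1)$-cluster, and by Proposition~\ref{prop:equivalence} it is in fact an isoperimetric cluster; standardness then follows from the classical isoperimetric inequality (for $N=2$), from~\cite{Foi93} (for $N=3$), and from~\cite{Wic04} (for $N=4$). When $M=N\in\{2,3\}$ every interface separates two infinite regions, so Theorem~\ref{th:recall} gives zero mean curvature throughout $\partial \vec E$; combined with the planar regularity of Theorem~\ref{th:recall2} and the ``flat at infinity'' structure of Theorem~\ref{th:boundedness}, a short combinatorial argument as in the proof of Theorem~\ref{th:boundedness} forces either a single line ($N=2$) or a single $Y$-junction ($N=3$).

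In the three remaining mixed cases, the key tool is the pressure formula of Theorem~\ref{th:recall2}(3): if two adjacent regions both have infinite area, their common interface has zero mean curvature, hence equal pressure. For the lens ($N=3,M=2$) this gives $p_0=p_1$, so the two arcs bounding the finite region $E_2$ share the common curvature $p_2-p_0$; together with the $120^\circ$ rule at the two endpoints on $\partial E_0\cap\partial E_1$, this determines a symmetric lens uniquely from $|E_2|$. The Releaux case ($N=4,M=3$) is analogous: $p_0=p_1=p_2$, the three arcs bounding $E_3$ have equal curvature, and they meet the three rays at infinity at $120^\circ$, producing the unique three-fold symmetric figure of area $|E_3|$.

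The peanut case ($N=4,M=2$) is the main obstacle. Here the structure at infinity is a single line $\ell=\partial E_0\cap\partial E_1$ with $p_0=p_1$, the bounded regions $E_2,E_3$ are contained in a large ball by Theorem~\ref{th:recall}, and $\partial\vec E$ is a finite graph (as in Theorem~\ref{th:boundedness}). The first task is to classify the combinatorial types compatible with regularity and the $120^\circ$ rule; this yields a short list consisting of the peanut itself, two disjoint lenses both straddling $\ell$, configurations in which one finite region is an isolated disk on one side of $\ell$, and nested variants. The off-line and nested candidates are eliminated by explicit compactly supported variations modeled on the rotation and translation tricks from the proof of Theorem~\ref{th:boundedness}. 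The most subtle alternative, two disjoint lenses, is ruled out by an explicit competitor: translating one lens along $\ell$ to abut the other and then merging them into a peanut with the same two areas yields a strict decrease of perimeter inside a fixed ball. Once the combinatorial type is fixed, $p_0=p_1$ forces vertical symmetry of the outer arcs, the flat $E_2|E_3$ segment gives $p_2=p_3$ by the pressure formula, and the two area constraints together with the $120^\circ$ conditions at the two outer vertices determine the configuration up to isometry.

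The hardest step is the quantitative perimeter inequality in the peanut case needed to rule out the two-disjoint-lens alternative. I would prove it either by a direct Steiner-tree style rearrangement in the spirit of Step 1 of the proof of Theorem~\ref{th:existence}, or, alternatively, by approximating $\vec E$ via Lemma~\ref{lem:approximation} by standard triple bubbles, invoking uniqueness of the latter from~\cite{Wic04}, and then applying Theorem~\ref{th:closure_flat} to transfer the cluster uniqueness to the limit partition $\vec E$.
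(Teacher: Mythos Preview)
Your outline for the cases $M=1$ and $M=N$ is essentially the paper's. In the lens and Releaux cases you jump to ``the two arcs bounding $E_2$'' and ``the three arcs bounding $E_3$'' without first proving that the finite region is connected and that the infinite regions have no bounded components. The paper spends a paragraph on this: every bounded component is a curvilinear $n$-gon with $120^\circ$ angles, and a sliding argument (via \cite{Ble87}) shows an exterior arc of the subcluster cannot be adjacent to an infinite region on both sides, forcing a single $2$-gon (lens) or $3$-gon (Releaux). Your pressure argument would work once this is in place, but it is not a substitute for it.

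The real gap is the peanut case $N=4$, $M=2$. First, your list of combinatorial types is incomplete: the paper observes that the subcluster is a \emph{chain} of components alternating between $E_3$ and $E_4$, with the two end pieces triangular and an arbitrary number of quadrilateral pieces in between; ``two disjoint lenses'' and ``nested variants'' do not cover this. Second, the sentence ``the flat $E_2|E_3$ segment gives $p_2=p_3$'' is wrong: in the standard peanut with $|E_2|\neq|E_3|$ the internal interface has curvature $p_2-p_3\neq 0$. Third, your alternative route through Lemma~\ref{lem:approximation} and Theorem~\ref{th:closure_flat} goes in the wrong direction: those results show that a limit of isoperimetric clusters is locally isoperimetric, they do not let you approximate an arbitrary locally isoperimetric $\vec E$ by standard triple bubbles, and they say nothing about uniqueness.

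What the paper actually does in this case is a first-variation comparison in the style of Lawlor \cite{Law19}. Let $\vec F=\vec F(m_3,m_4)$ be the standard peanut with the same areas and set $\tilde P(\vec E)=P(\vec E,B_R)-2R$. Since both $\vec E$ and $\vec F$ are locally isoperimetric, $\tilde P(\vec E(t))-\tilde P(\vec F(t))$ has a minimum at $t=0$ for any one-parameter deformation of a single external arc of $E_3$; equating first derivatives via \eqref{eq:49278} forces $p_3=q_3$, and similarly $p_4=q_4$. Hence every arc of $\vec E$ has the same curvature as the corresponding arc of $\vec F$, so the two triangular end pieces of the chain are congruent to $F_3$ and $F_4$; since they already have the full areas $m_3$, $m_4$, there are no further components and $\vec E$ is standard. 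This pressure-matching trick is the missing idea in your proposal.
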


\begin{proof}
For $N=1$ there is only one (trivial) partition $\vec E=(\RR^2)$.

For $N=2$ the cluster cannot have triple points hence the boundary 
is either a circle or a single straight line: both are standard.

Consider the case $N=3$. 
If we have a single region with infinite area then we have 
a standard double-bubble cluster.

If we have exactly two regions with infinite area then, by Theorem~\ref{th:boundedness}
we know that outside a large ball the interface is contained in a single line separating 
the two unbounded regions while the region with finite measure is bounded inside the ball. 
If we remove the two half-lines going to infinity we obtain what 
we called a subcluster in the proof of Theorem~\ref{th:boundedness}. 
The subcluster is composed by the connected components of the finite region of the partition
and possibly by other bounded connected components of the unbounded regions.

Remember that every connected component of an isoperimetric partition 
is simply connected (othersise the boundary of the partition would be disconnected).
Hence a bounded connected component is a curvilinear $n$-agon with all 
equal internal angles of 120 degrees. 
We know that each side of the $n$-agon touches a different connected component of some 
other region of the partition. 
In fact if two sides of the $n$-agon touch the same connected component
of another region then the components of the partition enclosed by the 
two touching components can be moved without changin perimeter and areas 
up to the point of touching the rest of the cluster with a quadruple point,
which is not possible (see \cite{Ble87}).

The two half lines going to infinity 
are adjacent to the same connected component 
$C_1$ of a region (say $E_1$) on one side and the same connected component 
$C_2$ of another region (say $E_2$) on the other side. 
The exterior arcs of the subcluster are adjacent to an unbounded component (either $C_1$ or $C_2$) 
on the outer side and hence must be adjacent to a component of $E_3$ in the inside, otherwise 
we could remove the arc decreasing the perimeter and without changing the area of $E_3$ which 
is the only area to be prescribed here. 
This means that the subcluster is composed by a single two-sided component of $E_3$ 
and the cluster is a lens cluster.

Suppose now that $N=3$ and we have three regions with infinite area.
Then, by Theorem~\ref{th:boundedness} we have three half-lines emanating from the 
subcluster. The three half lines separate three unbounded connected components, 
say $C_1$, $C_2$ and $C_3$, of the 
three regions $E_1$, $E_2$ and $E_3$.
We claim that in this case all the regions are connected. 
In fact suppose that a region, say $E_1$, is disconnected and take 
a bounded connected component $C$ of $E_1$.
If we give $C$ to any other component of the partition we strictly decrease 
the perimeter without changing any prescribed area, since they are all infinite.
Since all the regions are connected the subcluster must be empty and what 
we get is a partition whose boundary is composed by three half lines 
joining with equal angles at a triple point: this is the triple junction.

Suppose now that $N=4$. If we have only one region with infinite measure 
than the partition is a cluster and it is known that it must be a standard 
triple bubble. By Theorem~\ref{th:boundedness} we cannot have four regions 
with infinite measure.
If we have three regions with infinite area, Theorem~\ref{th:boundedness}
tells us that the boundary of the partition contains three half-lines emanating 
by a bounded subcluster. 
The three half lines separate three unbounded connected components $C_1$,
$C_2$ and $C_3$ of three regions, say respectively $E_1$, $E_2$ and $E_3$.
As in the case $N=3$ with two infinite regions, we can easily state 
that the external arcs of the subcluster must be adjacent to a component 
of the fourth region $E_4$ in the inside, otherwise by removing the arc 
we decrease the perimeter leaving the area of $E_4$ unchanged. 
So the subcluster is composed by a single, unique, component of $E_4$ which must 
be a triangular region. Since all angle are 120 degrees and the three half lines 
also define angles of 120 degrees one with each other, we conclude that the only 
possibility is for $E_4$ to be a Relaux triangle and the partition 
to be a Releaux partition.

The last case is $N=4$ with only two regions with infinite measure. In this 
case Theorem~\ref{th:boundedness} tells us that the boundary of the partition 
contains two collinear half lines emanating from a bounded subcluster. 
The rest of the proof is devoted to this case, which is much harder than the 
previous ones. 

We will use some of the ideas used by Lawlor in \cite{Law19}.
If $\vec E(t)$ is a one-parameter family of partitions
and $\vec E = \vec E(t_0)$ is a \emph{stationary} partition we have 
\begin{equation}\label{eq:49278}
   \Enclose{\frac{d}{dt} P(\vec E(t), B_R)}_{t=t_0} 
   = \sum_{k=1}^N p_k \Enclose{\frac{d}{dt} \abs{E_k}}_{t=t_0}
\end{equation}
where $p_k$ are the \emph{pressures} of the regions $E_k$ of $\vec E$.

Let $\vec E = \vec E(m_3,m_4)$ be a locally isoperimetric partition 
with measures $(+\infty,+\infty,m_3,m_4)$ and let $\vec F = \vec F(m_3,m_4)$ 
be the \emph{peanut} partition with the same measures.
By the considerations above, we know that there is a ball $B_R$ such that 
$\partial \vec E\setminus B_R$ is contained in a straight line. 
By translating and rescaling we can also assume that such a line is 
passing through the origin, which is the center of the ball. 
Do the same for $\vec F$.
Since we know that the $\vec F$ is locally isoperimetric (Theorem~\ref{cor:examples})
we have $P(\vec E,B_R) = P(\vec F,B_R)$.
Now define 
\[
\tilde P(\vec E) = P(E, B_R) - 2R 
\]
and notice that this definition does not depend on $R$ 
(if the ball has the stated requirements)
and that \eqref{eq:49278} holds true for $\tilde P$ as well
since $P(\vec E,B_R)$ differs from $\tilde P(\vec E)$ by a constant depending only on $R$.
If $\alpha$ is an arc separating $E_3$ from $E_1$ 
we know that the curvature of 
$\alpha$ is $p_3-p_1=p_3$ since $p_1=0$ (see Theorem~\ref{th:recall2}.
Consider a one family of clusters $\vec E(t)$ 
with $\vec E(0)=E_0$ and such that for $t$ varying in a neighbourhood of $t=0$ the arc $\alpha$ 
is replaced with an arc with curvature $p_3+t$ while the rest of the cluster is unchanged.
Let $\vec F(t)=\vec F(\abs{E_3(t)},m_4)$ be the peanut
partition with the same measures as $\vec E(t)$ so that $\vec F(0) = \vec F(m_3,m_4)$.
Let $q_3$ and $q_4$ be the pressures of the regions $F_3$ and $F_4$ of $\vec F$.
Since $\abs{E_k(t)}=\abs{F_k(t)}$ we have:
\begin{align}\label{eq:4776698}
  \Enclose{\frac{d}{dt} \tilde P(\vec F(t))}_{t=0}
  &= q_3 \Enclose{\frac{d}{dt} \abs{F_3(t)}}_{t=0}
  + q_4 \Enclose{\frac{d}{dt} \abs{F_4(t)}}_{t=0}\\
  &= q_3 \Enclose{\frac{d}{dt} \abs{E_3(t)}}_{t=0} 
  = \Enclose{\frac{d}{dt} \tilde P(\vec E(t))}_{t=0}.
\end{align}
Now notice that $\tilde P(\vec E(t)) \ge \tilde P(\vec F(t))$ for all $t$ 
because $\vec F(t)$ is locally isoperimetric. 
Moreover $\tilde P(\vec E(0)) = \tilde P(\vec F(0))$ because $\vec E$ is also 
isoperimetric. Hence $\tilde P(\vec E(t))-\tilde P(\vec F(t))$ has a local
minimum at $t=0$ and hence $\frac{d}{dt} \tilde P(\vec E(t))-\frac{d}{dt} \tilde P(\vec F(t))=0$
for $t=0$.
Hence from \eqref{eq:4776698} we deduce $q_3 = p_3$.
Repeating the same argument with an external arc of $E_4$ we obtain also $q_4=p_4$.
So the pressures of $\vec E$ coincide with the pressures of $\vec F$.
The \emph{internal} arcs, i.e.\ the arcs separating $E_3$ from $E_4$
have curvature $p_3-p_4=q_3-q_4$ and hence are also equal to the curvature
of the internal arcs of $\vec F$. 
Now notice that two triangles with angles of 120 degrees and with the same 
curvatures of the three sides are congruent. 
This means that the triangular connected components of $E_3$ and $E_4$
are congruent with the triangular regions $F_3$ and $F_4$ and in particular they have the same 
area.
Now remember that we know, from the previous discussion, 
that $\partial E$ is composed by a finite sequence of connected components 
alternating between the regions $E_3$ and $E_4$. 
The first and last components are triangular and the others are quadrilateral.
To fix the ideas let us suppose that the first triangular component $C$ is a component 
of the regione $E_3$. We conclude that $C$ is congruent to $F_3$. 
But since $\abs{E_3}=\abs{F_3}$ we notice that $C=E_3$ and $E_3$ has no other components.
This means that the last triangular component must be a component of $E_4$. 
And hence, reasoning as before, we conclude that it is congruent to $F_4$ and hence 
also $E_4$ has no other components.
We have hence found that $\vec E$ is congruent to $\vec F$ and hence $\vec E$ is standard.
\end{proof}

\section{Appendix}

We now give a path of statements leading to first regularity results for isoperimetric partitions, see Theroem ~\ref{th:recall},  and for their limits. Their proofs can be obtained with minor modifications from the corresponding for isoperimetric clusters. For reader convenience we refer to the
 monograph \cite[Ch. 29, 30]{Mag12}, and give only some outline to link the argument with the proofs there exposed.

\begin{theorem}[volume fixing variations]\label{th:volume_fixing_variations}
If $\vec F = (F_1,\dots, F_N)$, $N\geq 1$, is a partition of an open connected  set $B \subseteq \RR^d$, with $\abs{F_i\cap B}>0$ for all $i=1,\dots, N$, for every suitably choosen family of interface points, doubly linking each region of the partition with a fixed one with infinite volume,
there exist positive constants $\eps_1$, $C_1$,  
 $\eps_2$,  $\eta$, and an open bounded set $A\Subset B$ (a finite union of  open balls centered in the choosen interface points of $\vec E$ with radius $\eps_1$),
with the following property:
for every proper partition $\vec F^\prime = (F^\prime_1,\dots, F^\prime_N)$ of $B$ such that
$\sum \abs{(F_i\triangle F^\prime_i)\cap A} < \eps_2$ 
and every $\vec a\in V=:\{ (a_1,\dots, a_N)\in \RR^{N}\colon \abs{a_i} < \eta, \sum_i a_i = 0\}$,
there exists a $C^1$ function $\Phi\colon V\times B\to \RR^d$ with    $\Phi_{\vec a}\colon B\to B$ diffeomorphism 
of class $C^1$, such that
\begin{enumerate}
  \item $\ENCLOSE{x\in B\colon \Phi_{\vec a}(x) \neq x} \Subset A$;
  \item for all $i=0,\dots,N$ 
  \[
    \abs{\Phi_{\vec a}(F^\prime_i) \cap A} = \abs{F^\prime_i\cap A} + a_i;
  \] 
  \item given any $\H^{d-1}$-rectifiable set $\Sigma$ one has
  \[
    \abs{\H^{d-1}(\Phi_{\vec a}(\Sigma)) - \H^{d-1}(\Sigma)} 
    \le C_1 \H^{d-1}(\Sigma)\cdot \sum_{i=0}^{N} \abs{a_i}, 
  \]
\end{enumerate}
so that for every open bounded set $\Omega\subseteq B$ containg $\overline{A}$ one has 

\[\abs{P(\Phi_{\vec a} (\vec F^\prime), \Omega) -P(\vec F^\prime, \Omega)}
\le C_1\cdot P(\vec F^\prime, A) \sum_{i=0}^{N} \abs{a_i}
\]
\end{theorem}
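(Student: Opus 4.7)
The plan is to adapt the classical volume-fixing-variations construction (see \cite[Thm 29.14]{Mag12}) to the partition setting. The core idea is that at each chosen interface point one builds a small compactly-supported $C^1$ vector field whose time-one flow trades a prescribed amount of mass between the two regions sharing that interface; superposing these fields linearly with coefficients depending on $\vec a$ yields a single diffeomorphism $\Phi_{\vec a}$ which, after adjustment by the implicit function theorem, realizes the exact volume prescription $\vec a$ while producing only a $O(|\vec a|)$-perturbation of every $(d-1)$-rectifiable set in its support.

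First I would fix one region (say the distinguished infinite-volume one, call it $F_1$) and, for each $i\neq 1$, use the ``double linking'' to pick two distinct points $x_i^\pm \in \partial^* F_i \cap \partial^* F_1$ of full reduced-boundary regularity. Choose $\eps_1>0$ so small that the balls $B_{\eps_1}(x_i^\pm)$ are pairwise disjoint and compactly contained in $B$, and set $A$ to be their union. At each point $x_i^\pm$ build a smooth vector field $T_i^\pm$ supported in $B_{\eps_1}(x_i^\pm)$ with
\[
\int_{F_i} \mathrm{div}\, T_i^\pm \, dx = \pm 1, \qquad \int_{F_1} \mathrm{div}\, T_i^\pm \, dx = \mp 1,
\]
and $\int_{F_j} \mathrm{div}\, T_i^\pm = 0$ for $j\neq 1,i$; this is a standard construction at a regular interface point using a normal bump. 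Now for small $\vec a$ with $\sum a_i = 0$ look for parameters $\vec b = (b_2,\dots,b_N)\in \RR^{N-1}$ such that the time-one flow $\Phi_{\vec a, \vec b}$ of $\sum_{i\ge 2} b_i T_i^{\mathrm{sgn}(a_i)}$ satisfies
\[
G_i(\vec a, \vec b, \vec F') := \abs{\Phi_{\vec a, \vec b}(F'_i)\cap A} - \abs{F'_i\cap A} - a_i = 0, \qquad i=1,\dots, N.
\]
At $\vec a = 0$, $\vec b = 0$, $\vec F' = \vec F$ the partial Jacobian $D_{\vec b} G$ is, up to the redundancy $\sum a_i = 0$, the diagonal matrix built from $\int_{F_i} \mathrm{div}\, T_i^\pm \, dx = \pm 1$, hence invertible; the implicit function theorem then supplies $\vec b = \vec b(\vec a, \vec F')$ of class $C^1$, and we set $\Phi_{\vec a} := \Phi_{\vec a,\vec b(\vec a,\vec F')}$. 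Property~(1) holds since the vector fields are supported in $A$; property~(2) is the definition of $\vec b$. For property~(3), since $\Phi_{\vec 0} = \mathrm{Id}$ and $\vec a \mapsto \Phi_{\vec a}$ is $C^1$, the tangential Jacobian satisfies $|J^\Sigma \Phi_{\vec a}(x) - 1|\le C \sum_i \abs{a_i}$ uniformly in $x$, and the area formula
$\H^{d-1}(\Phi_{\vec a}(\Sigma)) = \int_\Sigma J^\Sigma \Phi_{\vec a}\, d\H^{d-1}$
gives the claim. The displayed corollary for $P(\Phi_{\vec a}(\vec F'),\Omega)$ then follows by applying (3) to $\Sigma = \partial^* F'_i \cap A$ for each $i$ (noting that outside $A$ the set $F'_i$ is unchanged) and summing.

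The main obstacle is the uniformity in $\vec F'$: one needs $D_{\vec b} G$ to stay invertible, and $\vec b(\vec a,\vec F')$ to be uniformly controlled, for every (possibly improper) perturbation with $\sum_i \abs{(F_i\triangle F'_i)\cap A}<\eps_2$. This follows because the map $\vec F'\mapsto \int_{F'_i} \mathrm{div}\, T_j^\pm$ is continuous in $L^1(A)$ with Lipschitz constant controlled by $\Abs{\nabla T_j^\pm}_\infty$, so by choosing $\eps_2$ small enough (depending only on the fixed fields $T_i^\pm$ and the original partition $\vec F$) the Jacobian remains close to its value at $\vec F$ and hence invertible with a uniform lower bound on its determinant; this gives uniform constants $C_1,\eta$ in the implicit function theorem. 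A small technical point is that $\vec F'$ is only required to be a partition and some $F'_i$ could degenerate to have small measure, but since the volume changes $a_i$ are implemented by the \emph{flow} acting on whatever mass $F'_i$ has inside $A$ (which differs from $\abs{F_i\cap A}$ by at most $\eps_2$), no positive-measure hypothesis on $F'_i$ is needed provided $\eps_2$ is small compared to $\abs{F_i\cap A}$, which is positive by the assumption $\abs{F_i\cap B}>0$ combined with the regular-point choice of the $x_i^\pm$.
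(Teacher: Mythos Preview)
Your proposal is correct and follows exactly the approach the paper invokes: the paper's own proof consists solely of the citation ``See \cite[Lemma 29.13, Theorem 29.14]{Mag12}'', and what you have written is an accurate reconstruction of that classical construction (compactly supported interface vector fields, implicit function theorem for the volume map uniformly in $\vec F'$ close to $\vec F$, and the tangential-Jacobian estimate via the area formula), with the partition-level adaptation already signposted in the statement.
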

\begin{proof}
  See \cite[Lemma 29.13, Theorem 29.14]{Mag12}.
\end{proof}

\begin{lemma} (Almgren's Lemma)\label{lm:almg} 
Let $\vec E$ a partition of $B$ an open connected set ($\vert B\cap E_j\vert >0$, for all $j$). Then
 there exist: a finite union of well separated disjoint open balls $A$,  compactly contained in $B$, constants $C$, $\eps >0$, $\eta >0$ depending on $\vec E$ and $B$ such that:  
 for each $\triangle\subseteq B$ open bounded disjoint from  $A$, for each partition $\vec E^\prime$ such that: $\abs{E^\prime_j \cap B}>0$, for all $j$, $\sum \abs{ E_j\triangle E^\prime_j\cap A}<\eps$, and 
each partition $\vec F$  of $B$, such that is a variation of $\vec E^\prime$ compactly contained in $\triangle $ and $\sum \vert| E_j^\prime \cap{\triangle}|- |F_j  \cap {\triangle}|\vert<\eta$, there exists a partition $\vec F^\prime$ of $B$, such that

\begin{enumerate}

\item   $ F_j \triangle  F^\prime_j $ is compactly contained in $A$, for all $j$,  
\item for any bounded open set $\Omega$ containing $A$ it holds $|F^\prime_j\cap \Omega|= |E^\prime_j\cap \Omega|$, for all $j$,
\item for any bounded open set $\Omega$ containing $A$ it holds\\
\centerline{ $|P(\vec F^\prime,\Omega) - P(\vec F,\Omega)|\le C\cdot P(\vec E^\prime, A)|\cdot 
\sum \left\vert \abs{F_j\cap \triangle} -\abs{E^\prime_j\cap \triangle}\right\vert$.}

\end{enumerate}

\noindent Hence if $\vec E^\prime $ is an isoperimetric partition for any bounded open set $\Omega\supseteq A$:

\[P(\vec E^\prime,\Omega)\le P(\vec F,\Omega)+ C\cdot P(\vec E^\prime, A)\cdot \sum \left\vert \abs{F_j\cap \triangle} -\abs{E^\prime_j\cap \triangle}\right\vert\]

\end{lemma}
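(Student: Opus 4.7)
The plan is to derive Almgren's Lemma as a direct application of Theorem~\ref{th:volume_fixing_variations}, following the pattern of \cite[Corollary 29.17]{Mag12} adapted to partitions. First I apply Theorem~\ref{th:volume_fixing_variations} to the given partition $\vec E$ in $B$: this fixes, once and for all, a finite union $A\Subset B$ of well separated balls centered at suitably chosen interface points of $\vec E$, together with positive constants $\eps_1,C_1,\eps_2,\eta$, such that every partition $\vec G$ of $B$ with $\sum_j\abs{(G_j\triangle E_j)\cap A}<\eps_2$ admits, for each admissible volume shift $\vec a\in V=\{\sum_i a_i=0,\ \abs{a_i}<\eta\}$, a diffeomorphism $\Phi_{\vec a}\colon B\to B$ supported in $A$ which produces the prescribed volume change with perimeter cost controlled by $C_1\,P(\vec G,A)\sum_i\abs{a_i}$. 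The set $A$ and the constants $C:=C_1$, $\eps:=\eps_2$, $\eta$ will serve as the output of Almgren's Lemma.

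Next, given $\triangle\subseteq B$ open bounded with $\triangle\cap A=\emptyset$, a partition $\vec E'$ with $\sum_j\abs{(E_j\triangle E_j')\cap A}<\eps$, and a variation $\vec F$ of $\vec E'$ compactly contained in $\triangle$ satisfying $\sum_j\abs{\abs{E_j'\cap\triangle}-\abs{F_j\cap\triangle}}<\eta$, I set the volume deficit
\[
a_j:=\abs{E_j'\cap\triangle}-\abs{F_j\cap\triangle}.
\]
Because $\vec E'$ and $\vec F$ are partitions of $B$ agreeing outside $\triangle$, the $a_j$ sum to zero, and by hypothesis $\vec a\in V$. Since $\triangle\cap A=\emptyset$, $\vec F$ coincides with $\vec E'$ on $A$, hence satisfies the closeness condition with respect to $\vec E$ required by Theorem~\ref{th:volume_fixing_variations}. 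I apply that theorem to $\vec F$ with perturbation $\vec a$ and define $F_j':=\Phi_{\vec a}(F_j)$.

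Conclusion (1) follows from $\{\Phi_{\vec a}\neq\mathrm{id}\}\Subset A$. For (2), since $F_j'=F_j$ outside $A$ and $\abs{F_j'\cap A}=\abs{F_j\cap A}+a_j$, for any bounded open $\Omega\supseteq A\cup\overline\triangle$ we get
\[
\abs{F_j'\cap\Omega}=\abs{F_j\cap\Omega}+a_j=\abs{E_j'\cap\Omega},
\]
using that $\abs{F_j\cap\Omega}-\abs{E_j'\cap\Omega}=\abs{F_j\cap\triangle}-\abs{E_j'\cap\triangle}$. For (3), applying part (3) of Theorem~\ref{th:volume_fixing_variations} with $\Sigma=\partial^*F_j\cap A$, summing over $j$ and exploiting $P(\vec F,A)=P(\vec E',A)$, yields
\[
\abs{P(\vec F',\Omega)-P(\vec F,\Omega)}\le C_1\cdot P(\vec E',A)\cdot\sum_j\abs{a_j}.
\]
When $\vec E'$ is isoperimetric, $\vec F'$ is a volume-preserving competitor for $\vec E'$ with $F_j'\triangle E_j'\Subset A\cup\triangle$ for every $j$, so the minimality inequality $P(\vec E',\Omega)\le P(\vec F',\Omega)$ combined with the estimate just proven gives the final assertion.

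The essential content lies in Theorem~\ref{th:volume_fixing_variations} itself, whose proof requires constructing enough independent volume-moving perturbations; the \emph{doubly linking} chain of interface points in its hypothesis encodes exactly the connectivity needed for this within a single connected set $B$. Granted that theorem, Almgren's Lemma reduces to the bookkeeping above, the key geometric observation being the disjointness of the test region $\triangle$ from the correction region $A$, which decouples the volume correction from the test variation.
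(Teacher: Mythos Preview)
Your proposal is correct and follows exactly the approach the paper intends: the paper's own proof is simply the one-line reference ``Is exactly the same proof of Lemma 29.16, Corollary 29.17 in \cite{Mag12}'', and you have spelled out precisely that argument, applying Theorem~\ref{th:volume_fixing_variations} to the pivot partition $\vec E$, then using the resulting diffeomorphism $\Phi_{\vec a}$ on $\vec F$ (which agrees with $\vec E'$ on $A$) to correct the volume deficit $a_j=\abs{E_j'\cap\triangle}-\abs{F_j\cap\triangle}$. Your observation that item~(2) actually requires $\Omega\supseteq A\cup\overline\triangle$ rather than merely $\Omega\supseteq A$ is a valid clarification of the statement as written.
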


\begin{proof} Is exactly the same proof of Lemma 29.16, Corollary 29.17 in \cite{Mag12}. 
\end{proof} 

\begin{remark}\label{rm:almg} Following \cite[Corollary 29.17]{Mag12}, one choose two  unions,  $A_1, A_2$, with  $dist(A_1, A_2)>0$, of the well separated balls with centers  two families of interface points of $\vec E$, as in Theorem 
\ref{th:volume_fixing_variations}. Put $\eta$ be the minimum among the relative $\eta_1$, $\eta_2$. Then 
  if $r_1<\frac{dist (A_1, A_2)}2$, $\omega_dr_1^d<\eta$ (depending only on $\vec E$), one can choose  as $\triangle$ any $B(x,r_1)$, $x\in \RR^d$, and $A$ one among $A_1$ and  $A_2$.
\end{remark}

With minor modifications Lemma 30.2 in \cite{Mag12} (Infiltration Lemma) still holds for locally isoperimtric partitions:

\begin{lemma} (Infiltration Lemma)\label{lm:inflt} 
For any partition $\vec E$ of $\RR^d$ consider $A_1$, $A_2$, $\eps (\vec E)$, $r_1(\vec E)$ as in Lemma \ref{lm:almg} and Remark  \ref{rm:almg}, and put $O=A_1\cup A_2$. Then 
for $d\geq 2$ there is $\eps_0(d)<\omega_d$, and   
for any $K\geq 1$  there exist $0<r_0 (\vec E , K)<\min\{1,r_1\}$, such that 
 {for every locally isoperimetric partition} $\vec E^\prime$ with

 \[P(\vec E^\prime, O)< K,\, \sum \vert  E_j\triangle  E_j^\prime \cap O\vert < \eps,\]  
 
\noindent  for each $x\in \RR^d$, $r<r_0$, 
 and for each $\Lambda\subseteq \{ 1, \dots , N\}$, if

\[\sum_{j\in \Lambda} \vert E^\prime_j \cap B(x,r)\vert < \eps_0 r^d\]

then 

\[\sum_{j\in \Lambda} \left\vert E^\prime_j \cap B\left(x,\frac r2\right)\right\vert =0\]

\end{lemma}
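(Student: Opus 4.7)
The plan is to adapt the classical argument of Maggi \cite[Lemma 30.2]{Mag12} from clusters to locally isoperimetric partitions, using the version of Almgren's Lemma (Lemma~\ref{lm:almg} and Remark~\ref{rm:almg}) already established in the Appendix. Fix $\vec E'$, $x$, $r$, $\Lambda$ satisfying the hypotheses and consider the non-decreasing function
\[
  m(s) := \sum_{j\in\Lambda}\abs{E'_j\cap B(x,s)}, \qquad U_s := \bigcup_{j\in\Lambda}(E'_j\cap B(x,s)),
\]
so that $\abs{U_s}=m(s)$ and, by the coarea formula, $\H^{d-1}(U_s\cap\partial B(x,s))=m'(s)$ for a.e.\ $s\in(0,r)$. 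The preparatory choice is to take $r_0 < \frac{1}{2}\dist(A_1,A_2)$ so that, for every $x\in\RR^d$, at least one of $A_1,A_2$—call it $A$—is disjoint from $B(x,r)$, and moreover $\omega_d r_0^d<\eta$ so that Lemma~\ref{lm:almg} applies with the variation domain $\triangle := B(x,s)$ disjoint from $A$.

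For a.e.\ $s\in(0,r)$ with $P(\vec E',\partial B(x,s))=0$, I would choose an index $k\notin\Lambda$ maximizing $\H^{d-1}(\partial^* E'_k\cap\partial^*U_s\cap B(x,s))$ and define the competitor $\vec F$ by $F_j := E'_j\setminus B(x,s)$ for $j\in\Lambda$, $F_k := E'_k\cup U_s$, and $F_\ell := E'_\ell$ otherwise. Interface bookkeeping in $\overline{B(x,s)}$ yields
\[
  P(\vec F,\overline{B(x,s)}) \le P(\vec E',B(x,s)) - Q(s) + m'(s),
\]
where $Q(s)$ is the $\H^{d-1}$-measure of the interfaces inside $B(x,s)$ among $\Lambda$-regions or between $\Lambda$ and $k$ (all destroyed by the construction). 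Applying Lemma~\ref{lm:almg} to $\vec F$ inside $A$ produces $\vec F'$ with correct volumes and $P(\vec F',\Omega)-P(\vec F,\Omega)\le C\cdot P(\vec E',A)\cdot m(s)\le CK\,m(s)$. Local isoperimetricity of $\vec E'$ then gives
\[
  Q(s) \le m'(s) + CK\,m(s).
\]

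By the maximizing choice of $k$ the single interface of $E'_k$ with $U_s$ carries at least $1/(N-\abs{\Lambda})\ge 1/N$ of $\H^{d-1}(\partial^*U_s\cap B(x,s))$, hence $NQ(s)\ge\H^{d-1}(\partial^*U_s\cap B(x,s))$; combined with $m'(s)=\H^{d-1}(\partial^*U_s\cap\partial B(x,s))$ this bounds $P(U_s)\le (N+1)m'(s)+CKN\,m(s)$. The sharp isoperimetric inequality for $U_s$ then yields
\[
  m(s)^{1-1/d}\le c_{d,N}\bigl(m'(s) + CK\,m(s)\bigr).
\]
Since $m(s)=m(s)^{1-1/d}\cdot m(s)^{1/d}\le\omega_d^{1/d}\,s\cdot m(s)^{1-1/d}$, by further shrinking $r_0=r_0(\vec E,K)$ so that $c_{d,N}\,CK\,\omega_d^{1/d}\,r_0\le \frac{1}{2}$ the linear term is absorbed, giving $m'(s)\ge c\,m(s)^{1-1/d}$ wherever $m(s)>0$, i.e.\ $(m^{1/d})'(s)\ge c/d$.

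Integrating from $r/2$ to $r$ yields $m(r)^{1/d}\ge m(r/2)^{1/d}+cr/(2d)$. If $m(r/2)>0$ then $m(r)>(cr/(2d))^d$, contradicting the hypothesis $m(r)<\varepsilon_0 r^d$ provided $\varepsilon_0:=(c/(2d))^d$, a constant depending only on $d$ (and $N$, which is fixed). Hence $m(r/2)=0$, as claimed. The main delicate point is the perimeter bookkeeping in the step $NQ(s)\ge\H^{d-1}(\partial^*U_s\cap B(x,s))$, since one must extract an isoperimetric-type bound from only the subset of interfaces of $U_s$ actually saved by the competitor; the second subtle point, automatically handled thanks to Remark~\ref{rm:almg}, is the need to dispose of two well-separated Almgren systems $A_1,A_2$ so that the volume-fixing can always be performed in a region disjoint from $B(x,r)$ regardless of where $x$ lies.
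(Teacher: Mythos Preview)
Your proposal is correct and follows essentially the same route as the paper, which simply refers to \cite[Lemma~30.2]{Mag12} and records the two adaptations: the use of the pivot partition $\vec E$ with the two separated systems $A_1,A_2$ from Remark~\ref{rm:almg}, and the extra shrinking of $r_0$ (the paper writes $r_0<\frac{1}{8CK}$) needed to absorb the Almgren correction term $CK\,m(s)$ into the decay inequality $m(s)^{1-1/d}\le 6\,m'(s)$. You have reconstructed exactly these steps, including the competitor construction, the isoperimetric inequality for $U_s$, and the ODE integration on $[r/2,r]$.
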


\begin{proof} Let $x\in\RR^d$. Modifing the proof of \cite[Lemma 30.2]{Mag12}, one applies Lemma \ref{lm:almg} and Remark \ref{rm:almg} to $\vec E^\prime$ with pivot $\vec E$, finding: $A$ among $A_1$, $A_2$, $C(\vec E)$, $\eps (\vec E)$, $\eta(\vec E)$,  $r_0(\vec E)<\min\left \{ \frac{dist (A_1, A_2)}2,\,  1,\, \left(\frac\eta \omega_d\right)^{\frac 1d}\right\}$, so that, for open bounded  $\Omega \supseteq A$,  $r< r_0$,  and for  $\vec F$ partition with $F_h\triangle E_h^\prime \Subset B_r(x)$, it holds good

\[P(\vec E^\prime,\Omega)\le P(\vec F,\Omega)+ C\cdot K\cdot \sum \left\vert \abs{F_j\cap B_r(x) } -\abs{E^\prime_j\cap B_r(x)}\right\vert .\]

Hence the proof is the same as reported in \cite[Lemma 30.2]{Mag12} to choose the competitor $\vec F$ to cancel the infiltration; at the end,   with the notation there used, to get the decay estimate (30.19) $m(s)^{1-\frac 1d}\le 6m^\prime (s)$,  one observes that suffices decreasing $r_0$ putting  $r_0< \frac 1{8 CK}$.
\end{proof}

Moreover using universal upper $(d-1)$-density estimate of the perimeter of a locally isoperimetric $N$-partition, for example given here in lemma \ref{lm:stima1}, one gets the local perimeter bound $K$ on any isoperimetric partions $\vec E^\prime$ depending only on $d$, $N$, $\vec E$ and $O$, so that it holds:

\begin{corollary}\label{cor:inflt} For $d\geq 2$ there is $\eps_0(d)<\omega_d$ such that  if $\vec E$ is a partition that is $L^1_{loc}$-limit of a sequence $\vec E^k$ of  locally isoperimetric partitions, then 
 there is  $r_0(\vec E)<1$, such that for all $x\in \RR$, $\Lambda\subseteq \{1, \dots , N\}$, $r<r_0$, if

\[\sum_{j\in \Lambda} \vert E_j \cap B(x,r)\vert \le \eps_0 r^d\]

then 

\[\sum_{j\in \Lambda} \left\vert E_j \cap B\left(x,\frac r2\right)\right\vert =0\]

\end{corollary}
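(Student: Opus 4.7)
The plan is to deduce the corollary from Lemma~\ref{lm:inflt} by using the limit partition $\vec E$ itself as the pivot and then passing to the limit in $k$. Let $\tilde\eps_0(d)<\omega_d$ denote the constant produced by Lemma~\ref{lm:inflt}, and set the corollary's constant to be $\eps_0(d):=\tilde\eps_0(d)/2$. The factor $1/2$ is precisely what will allow us to convert the non-strict hypothesis of the corollary into the strict inequality required by the lemma, after a small perturbation coming from $L^1_{\loc}$-convergence.

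After possibly relabelling and discarding the indices $j$ with $\abs{E_j}=0$ (for which the conclusion of the corollary is trivially satisfied, and whose removal does not affect the convergence of the proper part of $\vec E^k$), I would apply Lemma~\ref{lm:inflt} with the resulting proper partition $\vec E$ as pivot. This produces the union of well separated balls $O=A_1\cup A_2$ centred at interface points of $\vec E$ together with the constants $\eps(\vec E)$, $r_1(\vec E)$. To fix the threshold $K$ required by the Infiltration Lemma, I would invoke Lemma~\ref{lm:stima1}, which gives the universal bound $P(\vec F,O)\le C_0(d,N)\cdot\mathrm{diam}(O)^{d-1}=:K$ valid for \emph{every} locally isoperimetric partition $\vec F$; with this choice the Infiltration Lemma furnishes a radius $r_0(\vec E)\in(0,r_1)$.

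Now fix $x\in\RR^d$, $r<r_0$ and $\Lambda\subseteq\{1,\dots,N\}$ with $\sum_{j\in\Lambda}\abs{E_j\cap B(x,r)}\le\eps_0\,r^d$. By the $L^1_{\loc}$-convergence $\vec E^k\to\vec E$, for $k$ sufficiently large we have simultaneously
\[
\sum_{j=1}^N\abs{(E_j\triangle E_j^k)\cap O}<\eps(\vec E)\qquad\text{and}\qquad\sum_{j\in\Lambda}\abs{E_j^k\cap B(x,r)}<\tilde\eps_0(d)\,r^d,
\]
the second inequality being where the factor $1/2$ is used. All the hypotheses of Lemma~\ref{lm:inflt} are then satisfied by the locally isoperimetric partition $\vec E^k$ with pivot $\vec E$, so the lemma gives $\sum_{j\in\Lambda}\abs{E_j^k\cap B(x,r/2)}=0$. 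Letting $k\to\infty$ and using once more the $L^1_{\loc}$-convergence delivers $\sum_{j\in\Lambda}\abs{E_j\cap B(x,r/2)}=0$.

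The only mildly delicate point is the uniformity across the sequence, and it is the main thing to verify: the pivot data $(O,\eps(\vec E),r_1(\vec E))$ from Lemma~\ref{lm:inflt} depend only on the limit partition $\vec E$, while the upper perimeter bound $K$ from Lemma~\ref{lm:stima1} depends only on $d$, $N$ and $\mathrm{diam}(O)$. Hence the infiltration statement applies uniformly to the whole tail of $\vec E^k$ once $\vec E$ has been fixed as the pivot. Everything else in the argument is bookkeeping around the $L^1_{\loc}$-convergence.
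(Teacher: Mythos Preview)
Your proposal is correct and follows essentially the same approach as the paper, which disposes of the corollary in a single sentence preceding its statement: one uses Lemma~\ref{lm:stima1} to obtain a uniform perimeter bound $K$ on $O$ valid for every locally isoperimetric partition, then feeds this $K$ into Lemma~\ref{lm:inflt} with pivot $\vec E$. You have simply spelled out the passage to the limit in $k$ and the bookkeeping with the factor $1/2$ (to turn the non-strict hypothesis into the strict one needed after the $L^1_{\loc}$ perturbation), which the paper leaves implicit.
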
 

\begin{corollary} Such a limit  partion of locally isoperimetric ones can be considered with open regions.
\end{corollary}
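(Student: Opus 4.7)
The plan is to show that, for each region $E_j$ of the limit partition, the set $E_j^{(1)}$ of points of Lebesgue density $1$ with respect to $E_j$ is an open set equivalent to $E_j$. This gives open representatives, and since a point cannot simultaneously have density $1$ in two different regions and (by the Lebesgue differentiation theorem) almost every point of $\RR^d$ has density $1$ in exactly one region, the family $(E_1^{(1)},\dots,E_N^{(1)})$ is again a partition in the measure-theoretic sense.

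First I would fix $j\in\{1,\dots,N\}$ and a point $x\in E_j^{(1)}$. By definition, the density of the complement $\bigcup_{k\ne j}E_k$ at $x$ equals $0$, so one may choose $r<r_0(\vec E)$ (with $r_0$ given by Corollary~\ref{cor:inflt}) small enough to have
\[
  \sum_{k\ne j}\abs{E_k\cap B(x,r)} \le \eps_0\, r^d,
\]
where $\eps_0=\eps_0(d)$ is the universal constant of Corollary~\ref{cor:inflt}. Applying the Corollary with $\Lambda=\{1,\dots,N\}\setminus\{j\}$ yields
\[
  \sum_{k\ne j}\left|E_k\cap B\bigl(x,\tfrac{r}{2}\bigr)\right|=0,
\]
equivalently $\bigl|E_j\cap B(x,r/2)\bigr|=\bigl|B(x,r/2)\bigr|$. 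Hence every $y\in B(x,r/2)$ has density $1$ with respect to $E_j$, which shows $B(x,r/2)\subset E_j^{(1)}$. Thus $E_j^{(1)}$ is open.

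It then remains to observe that $|E_j^{(1)}\triangle E_j|=0$ for every $j$, which is the Lebesgue density theorem, and that the sets $E_j^{(1)}$ are pairwise disjoint, since at any point the density is $1$ for at most one region. Together with the fact that almost every $x\in\RR^d$ is a density-$1$ point of one of the $E_j$, this proves that $(E_1^{(1)},\dots,E_N^{(1)})$ is a partition of $\RR^d$ (up to a negligible set) whose regions are open and coincide with the original $E_j$ in the sense of Caccioppoli sets. The only real content is the use of the infiltration Corollary~\ref{cor:inflt}; everything else is a standard density argument, so I do not expect any serious obstacle.
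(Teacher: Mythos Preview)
Your proposal is correct and is exactly the argument the paper intends: the corollary is stated without proof right after Corollary~\ref{cor:inflt}, and the standard way to extract open representatives from an infiltration-type statement is precisely to show that the set of density-$1$ points of each region is open, which is what you do. There is nothing to add.
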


This allow to extend the regularity of minimizing clusters to locally isoperimetric partitions (Theorem \ref{th:recall}, and Theorem 30.1 in \cite{Mag12}), and  get also the following density estimates (same proof of Lemma 30.6 of \cite{Mag12}):

\begin{lemma}  For $d\geq 2$ there exists \emph{positive} constants $c_0\le c_1<1$  and $c_2$, such that if $\vec E$ is a locally isoperimetric partition there exists $r_0>0$ such that for every region $E_j$ whenever $\rho<r$ and $x\in \partial E_j$
\begin{enumerate}
\item $c_0\omega_d \rho^d\le |E_j\cap B(x,\rho)|\le c_1 \omega_d \rho^d$
\item $c_2\omega_{d-1} \rho^{d-1} \le P(E_j, B(x, \rho))$.
\end{enumerate}

\end{lemma}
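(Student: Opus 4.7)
The plan is to read both volume estimates directly out of the Infiltration Lemma (Lemma~\ref{lm:inflt}) applied to two natural choices of the index set $\Lambda$, and then to obtain the perimeter lower bound from the classical relative isoperimetric inequality in a ball.

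First I would fix the dimensional constant $\eps_0 = \eps_0(d) < \omega_d$ from Lemma~\ref{lm:inflt}, shrinking it if necessary so that $2\eps_0 \le \omega_d$. Given a locally isoperimetric partition $\vec E$, the universal upper bound of Lemma~\ref{lm:stima1} provides a uniform perimeter bound $K = K(d,N)$ on the set $O = A_1 \cup A_2$ chosen in Remark~\ref{rm:almg}, so the Infiltration Lemma yields a radius $r_0 = r_0(\vec E) > 0$ valid for $\vec E$ itself (the smallness hypothesis $\sum_j |E_j \triangle E_j^\prime \cap O| < \eps$ is trivial with $\vec E^\prime = \vec E$). Now fix $x \in \partial E_j$ and $\rho < r_0$, and apply the Infiltration Lemma with $\Lambda = \{j\}$: if we had $|E_j \cap B(x,\rho)| < \eps_0 \rho^d$, then $|E_j \cap B(x,\rho/2)| = 0$, contradicting the measure-theoretic definition of $\partial E_j$. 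This gives the lower bound with $c_0 = \eps_0/\omega_d$. Taking instead $\Lambda = \{1,\dots,N\} \setminus \{j\}$: if $\sum_{i \neq j} |E_i \cap B(x,\rho)| < \eps_0 \rho^d$, then every $E_i$ with $i \neq j$ has zero measure in $B(x,\rho/2)$, so the partition property forces $E_j$ to fill $B(x,\rho/2)$, again contradicting $x \in \partial E_j$. We conclude $|E_j \cap B(x,\rho)| \le (1 - \eps_0/\omega_d)\,\omega_d \rho^d$, and setting $c_1 = 1 - \eps_0/\omega_d$ our choice of $\eps_0$ ensures $c_0 \le c_1 < 1$.

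For the perimeter estimate I would invoke the relative isoperimetric inequality in $B(x,\rho)$,
\[
  \min\bigl\{|E_j \cap B(x,\rho)|,\; |B(x,\rho) \setminus E_j|\bigr\}^{\frac{d-1}{d}} \le C_{\mathrm{iso}}(d) \cdot P(E_j, B(x,\rho)).
\]
By the two volume bounds just obtained, both quantities inside the minimum are at least $c_0 \omega_d \rho^d$: the first directly from (1), and the second because $B(x,\rho) \setminus E_j \supset \bigcup_{i \neq j} E_i \cap B(x,\rho)$, whose measure is at least $\eps_0 \rho^d$ by the argument used for the upper volume bound. Rearranging gives $P(E_j, B(x,\rho)) \ge c_2 \omega_{d-1} \rho^{d-1}$ for a purely dimensional $c_2$.

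The only delicate point, which is already encapsulated in the hypotheses of Lemma~\ref{lm:inflt}, is that $r_0$ genuinely depends on $\vec E$ through the size of the \emph{a priori} balls $A_1, A_2$ attached to it and through the perimeter bound $K$ on $O$; once Lemma~\ref{lm:stima1} is used to render $K$ universal, the constants $c_0, c_1, c_2$ themselves depend only on $d$. Beyond this setup, the argument is a mechanical application of infiltration at a measure-theoretic boundary point — the heart of the matter is the simple fact that at such a point \emph{both} $E_j$ and its complement have strictly positive mass in every ball around $x$, which the Infiltration Lemma converts into a quantitative density dichotomy.
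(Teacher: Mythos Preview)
Your proposal is correct and follows essentially the same approach as the paper, which simply refers to \cite[Lemma~30.6]{Mag12}: that proof likewise obtains the lower and upper volume densities by applying the Infiltration Lemma with $\Lambda=\{j\}$ and $\Lambda=\{1,\dots,N\}\setminus\{j\}$ respectively, and then deduces the perimeter lower bound from the relative isoperimetric inequality in the ball.
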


\begin{corollary}\label{cor:bnd} If $\vec E$ is a locally isoperimetric partition then each regions with finite measure is bounded. Hence the subcluster of the chambers with finite volume is bounded and with finite perimeter.
\end{corollary}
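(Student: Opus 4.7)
The plan is to argue by contradiction using only the lower volume density estimate from the preceding lemma, which is the standard route for turning density bounds into boundedness of finite-volume regions.

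First I would fix a region $E_j$ with $\abs{E_j}<+\infty$ and suppose for contradiction that $E_j$ is unbounded. The immediate observation is that $\partial E_j$ must then itself be unbounded. Indeed, since $\abs{E_j}<+\infty$, for every $R>0$ the set $(\RR^d\setminus B_R)\setminus E_j$ has infinite measure, while $E_j\setminus B_R$ has positive measure for $R$ smaller than any fixed threshold; hence both $E_j$ and its complement meet $\RR^d\setminus B_R$ with positive measure, and the measure-theoretic boundary of $E_j$ (which equals $\partial E_j$ by our choice of representative) must intersect $\RR^d\setminus B_R$. In particular there is a sequence $(x_n)\subset \partial E_j$ with $\abs{x_n}\to+\infty$.

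Next, I would use the radius $r_0>0$ provided by the preceding density estimate lemma and extract, by passing to a subsequence of $(x_n)$, a sequence such that the balls $B(x_n,r_0/2)$ are pairwise disjoint (this is possible because $\abs{x_n}\to+\infty$). Applying the lower density estimate with $\rho=r_0/2<r_0$ at each $x_n\in\partial E_j$ gives
\[
\abs{E_j\cap B(x_n,r_0/2)}\ge c_0\,\omega_d\,(r_0/2)^d.
\]
By disjointness of the balls I conclude
\[
\abs{E_j}\ge \sum_{n=1}^\infty \abs{E_j\cap B(x_n,r_0/2)}=+\infty,
\]
contradicting $\abs{E_j}<+\infty$. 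Therefore every finite-measure region is bounded.

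For the second part, the \emph{subcluster} is, by definition, the union of the (finitely many) finite-volume regions. Being a finite union of bounded sets it is bounded. Its finite perimeter then follows immediately from Lemma~\ref{lm:stima1}: once the subcluster is contained in some ball $B_R$, the perimeter of each of its regions equals $P(E_j,\bar B_R)\le 2P(\vec E,\bar B_R)\le 2C_0R^{d-1}<+\infty$, and summing the finitely many contributions yields a finite total perimeter.

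The only genuinely delicate step is the first one, namely producing the sequence $x_n\in\partial E_j$ diverging to infinity; everything afterwards is a standard packing argument. There are no compactness issues since the density estimate is uniform in $x$ for $\rho<r_0$, and the constants $c_0$ and $r_0$ depend only on $\vec E$ (in particular, they do not degenerate along the escaping sequence $x_n$).
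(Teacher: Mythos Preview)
Your proof is correct and follows essentially the same route as the paper's: both arguments produce boundary points of the finite-volume region arbitrarily far from the origin and invoke the lower volume density estimate from the preceding lemma to contradict $\abs{E_j}<+\infty$. The only cosmetic difference is that the paper uses a single far-away boundary point and lets the tail mass $\eps=\abs{F\setminus B_R}$ go to zero, whereas you pack infinitely many disjoint balls and sum; these are interchangeable variants of the standard ``density implies boundedness'' argument.
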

\begin{proof}
Fix $F$ a region of $\vec E$ with finite positive volume and $|F|> \varepsilon >0$, take a large ball $B=B(\vec 0, R)$ such that $0<|F\setminus B|\le \varepsilon$. On other hand if $F$ were unbounded then it would be that for every  $B^\prime\Supset B$ concentric ball  $|F\setminus B^\prime|>0$. Both  $|F|<\infty$ and $|F\setminus B^\prime|>0$ yield $\partial F\setminus B^\prime\not=\emptyset$. So that if $r_0$ is given as in the volume density estimate, and the radius of $B^\prime$ is greater than $R+r_0$, for $x\in \partial F\setminus {B^\prime}$ one has $B(x,\rho)\subset \RR^d\setminus B$ for each $\rho<r_0$. Summing up $c_0\omega_d \rho^d \le |F\cap B(x, \rho)|\le|F\setminus B|\le \varepsilon$, so $c_0\omega_d r^d \le \eps$ 
that can not hold for every $\varepsilon>0$.

\end{proof}

Thanks to the infiltration lemma with pivot partition $\vec E$, \ref{lm:inflt}, \ref{cor:inflt}, one has to observe that the density estimate are rather uniform, so that it holds:

\begin{lemma}  For $d\geq 2$ there are positive constants $c_0\le c_1<1$ and $ c_2$, depending only on $d$,  and for every $\vec E$ $N$-partition $L^1_{loc}$ limits of locally isoperimetric partitions, 
there exists $0<r_0(\vec E)<1$ such that 
whenever $\rho<r_0$ and $x\in \partial E_j$, $1\le j \le N$

\item $c_0\omega_d \rho^d\le |E_j\cap B(x,\rho)|\le c_1 \omega_d \rho^d$
\item $c_2\omega_{d-1} \rho^{d-1} \le P(E_j, B(x, \rho))$

\end{lemma}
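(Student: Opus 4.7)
The plan is to adapt the proof of Lemma 30.6 in Maggi's monograph, substituting the uniform infiltration statement for limits given in Corollary \ref{cor:inflt} in place of the infiltration lemma for a single isoperimetric partition. Since $\eps_0$ in Corollary \ref{cor:inflt} depends only on $d$, this will force all the density constants $c_0,c_1,c_2$ to depend on $d$ alone, with only the threshold radius $r_0$ depending on the particular partition $\vec E$.

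Fix $\vec E$, a limit of locally isoperimetric partitions, and let $\eps_0 = \eps_0(d)$ and $r_0 = r_0(\vec E) < 1$ be the constants given by Corollary \ref{cor:inflt}. Set $c_0 \defeq \eps_0/\omega_d$ and $c_1 \defeq 1-c_0$. For the \emph{lower} volume bound, take $x\in\partial E_j$ and $\rho<r_0$. If we had $|E_j\cap B(x,\rho)| < \eps_0\rho^d = c_0\omega_d\rho^d$, then Corollary \ref{cor:inflt} applied with $\Lambda=\{j\}$ would give $|E_j\cap B(x,\rho/2)|=0$, contradicting $x\in\partial E_j$ (which, after the representant choice of the preceding corollary ensuring open regions, implies positive density of $E_j$ in every ball centered at $x$). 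For the \emph{upper} bound, if $|E_j\cap B(x,\rho)| > c_1\omega_d\rho^d$, then $\sum_{i\neq j}|E_i\cap B(x,\rho)| < \eps_0\rho^d$, so Corollary \ref{cor:inflt} applied with $\Lambda = \{1,\dots,N\}\setminus\{j\}$ forces $E_i\cap B(x,\rho/2)$ to be empty for all $i\neq j$. Since $\vec E$ is a partition, $E_j\supset B(x,\rho/2)$ up to a null set, which again contradicts $x\in\partial E_j$.

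Once the two-sided volume estimate is established, the perimeter lower bound follows from the relative isoperimetric inequality on $B(x,\rho)$: for a set of finite perimeter $E$ with $\min\{|E\cap B(x,\rho)|,|B(x,\rho)\setminus E|\} \ge c_0\omega_d\rho^d$, one has
\[
P(E,B(x,\rho)) \ge \gamma_d\cdot \min\{|E\cap B(x,\rho)|,|B(x,\rho)\setminus E|\}^{\frac{d-1}{d}}
\]
with $\gamma_d$ depending only on $d$. Applying this to $E_j$, whose measure inside $B(x,\rho)$ is pinched between $c_0\omega_d\rho^d$ and $c_1\omega_d\rho^d < \omega_d\rho^d$, both quantities under the minimum are bounded below by $c_0\omega_d\rho^d$. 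We obtain $P(E_j,B(x,\rho))\ge c_2\omega_{d-1}\rho^{d-1}$ for a constant $c_2=c_2(d)$.

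The main obstacle is the one already handled by Corollary \ref{cor:inflt}: namely, transferring the infiltration lemma to the limit object, which in turn relied on the universal perimeter upper bound from Lemma \ref{lm:stima1} to ensure that the hypothesis $P(\vec E^k,O)<K$ in Lemma \ref{lm:inflt} is met with $K$ independent of $k$. Given that corollary, the remainder of the argument is standard and, crucially, the role of $\vec E$ enters only through the radius $r_0$, not through the three constants $c_0,c_1,c_2$.
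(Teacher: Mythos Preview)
Your argument is correct and is precisely the approach the paper intends: it explicitly defers to Lemma~30.6 in Maggi, with the infiltration input replaced by Corollary~\ref{cor:inflt}, and the paper's only additional remark is that the constants coming out of that argument depend on $d$ alone because $\eps_0=\eps_0(d)$. Your write-up simply spells out those steps (the two contrapositives for the volume bounds and the relative isoperimetric inequality for the perimeter bound), so there is nothing to add.
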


\begin{remark} Notice that if $\vec E^k\to \vec E$, in $L^1_{loc}$, are locally isoperimetric $N$-partitions, then with the same constants, whenever $\rho <r_0$, and $k$ is large enough, for any $1\le j \le N$ and $x\in \partial E^k_j$

\item $c_0\omega_d \rho^d\le |E^k_j\cap B(x,\rho)|\le c_1 \omega_d \rho^d$
\item $c_2\omega_{d-1} \rho^{d-1} \le P(E^k_j, B(x, \rho))$

\end{remark}
%
%
%
%

Similarly for partitions that are limit of isoperimetric partions we have that the regions with finite volume are bounded.

\begin{corollary} If $\vec E$ is a  partition that is $L^1_{loc}$ limit of isoperimetric partitions $\vec E^k$, then its region with finite volume are bounded.
\end{corollary}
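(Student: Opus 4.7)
The plan is to mimic the proof of Corollary~\ref{cor:bnd} verbatim, the only substantive difference being that in place of the density estimate valid for a single locally isoperimetric partition, one uses the density estimate for $L^1_\loc$-limits of locally isoperimetric partitions established in the lemma two items above this corollary. That lemma provides constants $c_0, c_1$ depending only on $d$ and a scale $r_0 = r_0(\vec E) > 0$ such that
\[
  c_0 \omega_d \rho^d \le |E_j \cap B(x,\rho)| \le c_1\omega_d \rho^d
  \qquad \forall\, x\in \partial E_j,\ \rho< r_0,
\]
for every region $E_j$ of $\vec E$. This is precisely the ingredient that made the proof of Corollary~\ref{cor:bnd} work, so that proof transports unchanged.

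In detail, fix a region $E_j$ with $0 < |E_j| < +\infty$. Choose $\varepsilon \in (0, c_0\omega_d r_0^d)$ and then pick $R>0$ so large that $|E_j\setminus B_R| < \varepsilon$. Suppose by contradiction that $E_j$ is unbounded. Then for every concentric ball $B_{R'} \supset B_R$ we have $|E_j\setminus B_{R'}|>0$, and combined with $|E_j|<+\infty$ this forces $\partial E_j \setminus \overline{B_{R'}} \neq \emptyset$ (otherwise $E_j\setminus B_{R'}$ would, up to null sets, be either empty or all of $\RR^d\setminus B_{R'}$, contradicting $0<|E_j\setminus B_{R'}|<+\infty$). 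Taking $R' = R+r_0$ and picking $x\in \partial E_j\setminus \overline{B_{R+r_0}}$, we have $B(x,\rho)\subset \RR^d\setminus B_R$ for every $\rho<r_0$, hence by the density estimate
\[
  c_0\omega_d \rho^d \le |E_j\cap B(x,\rho)| \le |E_j\setminus B_R| < \varepsilon.
\]
Letting $\rho \to r_0^-$ gives $c_0\omega_d r_0^d \le \varepsilon$, contradicting the choice of $\varepsilon$. Hence $E_j$ is bounded.

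There is no real obstacle here beyond ensuring that the density estimate is indeed available for the limit partition $\vec E$: this is supplied by the earlier appendix lemma, whose proof in turn rests on the Infiltration Lemma with the pivot construction (Lemma~\ref{lm:inflt} and Corollary~\ref{cor:inflt}) together with the uniform upper perimeter bound of Lemma~\ref{lm:stima1}. Once those statements are in hand, the argument above is a direct repetition of Corollary~\ref{cor:bnd}.
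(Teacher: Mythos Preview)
Your proposal is correct and follows exactly the route the paper intends: the paper does not give a separate proof but simply says ``Similarly\dots'', meaning that the argument of Corollary~\ref{cor:bnd} carries over verbatim once the density estimate for $L^1_\loc$-limits (the lemma two items above) is substituted for the one for single isoperimetric partitions. Your write-up is in fact a slightly cleaner rendition of that same argument.
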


\bibliographystyle{amsplain}

\providecommand{\bysame}{\leavevmode\hbox to3em{\hrulefill}\thinspace}
\providecommand{\MR}{\relax\ifhmode\unskip\space\fi MR }
\providecommand{\MRhref}[2]{%
  \href{http://www.ams.org/mathscinet-getitem?mr=#1}{#2}
}
\providecommand{\href}[2]{#2}

\end{document}